\newcommand{\om}{\omega}
\newcommand\R{\mathbb{R}}
\newcommand\C{\mathbb{C}}
\newcommand\N{\mathbb{N}}
\newcommand\GW{\mathrm{GW}}
\newcommand\Q{\mathbb{Q}}
\newcommand\X{\mathbb{X}}
\newcommand\Z{\mathbb{Z}}
\newcommand\CP{\mathbb{CP}}
\newcommand{\Ss}{\mathcal{S}}
\newcommand\QH{{QH}}
\newcommand \bbcp{\mathbb C\mathbb P}
\DeclareMathOperator{\Symp}{Symp}
\DeclareMathOperator{\Ham}{Ham}
\DeclareMathOperator{\Diff}{Diff}
\DeclareMathOperator{\Aut}{Aut}
\DeclareMathOperator{\pt}{pt}
\DeclareMathOperator{\AGL}{AGL}
\newcommand{\Muc}{{M}_{\mu,c_1,\hdots,c_{n-1}}}
\newcommand{\Mucc}{{M}_{\mu,c_1,\hdots,c_{n}}}
\newcommand{\Mucccc}{{M}_{\mu,c_1,c_2,c_3,c_4}}
\newcommand{\Muhalf}{{M}_{\mu,c_i=1/2}}
\newcommand{\omuc}{\omega_{\mu,c_1,\hdots, c_{n-1}}}
\newcommand{\conezero}{\mathcal{\#}\{c_1=0\}}
\newtheorem{theorem}{Theorem}[section]
\newtheorem{lemma}[theorem]{Lemma}
\newtheorem{proposition}[theorem]{Proposition}
\newtheorem{corollary}[theorem]{Corollary}
\newtheorem{definition}[theorem]{Definition}
\newtheorem{remark}[theorem]{Remark}
\newtheorem{question}[theorem]{Question}
\newtheorem{conjecture}[theorem]{Conjecture}
\newtheorem*{theorem*}{Theorem}
\newtheorem*{lemma*}{Lemma}
\newtheorem*{proposition*}{Proposition}
\newtheorem*{corollary*}{Corollary}
\newtheorem*{definition*}{Definition}
\newtheorem*{remark*}{Remark}
\begin{document}

\title[Loops in the fundamental group of $\Symp (\bbcp^2\#\,5\overline{ \bbcp}\,\!^2,\omega)$]{Loops in the fundamental group of $\Symp (\bbcp^2\#\,5\overline{ \bbcp}\,\!^2,\omega)$ which are not represented by  circle actions}

\author[S. Anjos]{S\'ilvia Anjos}
\address{SA: Center for Mathematical Analysis, Geometry and Dynamical Systems \\ Department of Mathematics \\  Instituto Superior T\'ecnico \\  Av. Rovisco Pais \\ 1049-001 Lisboa \\ Portugal}
\email{sanjos@math.ist.utl.pt}

\author[M. Barata]{Miguel Barata}
\address{MB: Utrecht Geometry Center \\ Utrecht University \\ Budapestlaan 6 \\ 3584 CD Utrecht\\ The Netherlands}
\email{m.lourencohenriquesbarata@uu.nl}

\author[M. Pinsonnault]{Martin Pinsonnault}
\address{MP: Department of Mathematics \\  University of Western Ontario \\ Canada}
\email{mpinson@uwo.ca}

\author[A. A. Reis]{Ana Alexandra Reis}
\address{AAR: Department of Mathematics \\  Instituto Superior T\'ecnico \\  Av. Rovisco Pais \\ 1049-001 Lisboa \\ Portugal}
\email{ana.alexandra.reis@tecnico.ulisboa.pt}

\date{\today}

\thanks{The first author is partially supported by FCT/Portugal through projects UID/MAT/04459/2019 and PTDC/MAT-PUR/29447/2017. The third author is partially supported by NSERC Discovery Grant RGPIN-2020-06428. All authors except the third are supported by Calouste Gulbenkian Foundation through the program "New Talents in Mathematics".}

\begin{abstract}

We study generators of the fundamental group of the group of symplectomorphisms  $\Symp (\bbcp^2\#\,5\overline{ \bbcp}\,\!^2, \omega)$ for some particular symplectic forms. It was observed by J. K\c{e}dra in \cite{Ke}  that  there are many symplectic 4-manifolds $(M, \omega)$, where $M$ is neither rational nor  ruled,  that admit no circle action and $\pi_1 (\Ham (M,\omega))$ is nontrivial. On the other hand, it follows from \cite{AbrMcD}, \cite{Pin}, \cite{AnjPin} and \cite{AnjEden} that  the fundamental group of the group $ \Symp_h(\bbcp^2\#\,k\overline{ \bbcp}\,\!^2,\omega)$, of symplectomorphisms that act trivially on homology,  with $k \leq 4$,  is generated by circle actions on the manifold. We show that,  for some particular symplectic forms $\omega$, 
the set of all Hamiltonian circle actions generates a proper subgroup in 
 $\pi_1(\Symp_{h}(\bbcp^2\#\,5\overline{ \bbcp}\,\!^2,\omega)).$ 
Our work depends on Delzant classification of toric symplectic manifolds, Karshon's classification of Hamiltonian $S^1$-spaces and the computation of Seidel elements of some circle actions.

\end{abstract}

\subjclass[2010]{Primary 53D35; Secondary 57R17,57S05,57T20}
\keywords{symplectic geometry, symplectomorphism group, fundamental group, Hamiltonian circle actions}

\maketitle

\section{Introduction}\label{section: intro}

Let $(M, \omega)$ be a closed simply connected symplectic manifold. The symplectomorphism group $\Symp(M, \omega)$, equipped with the standard $C^\infty$-topology, is an infinite dimensional Fréchet Lie group. In general, symplectomorphism groups are viewed as intermediate objects between Lie groups and general groups of diffeomorphisms. Of course, this philosophy can be understood in many different ways. One interesting question is to compare the homotopy types of various symplectomorphism groups with those of compact Lie groups, and see to what extend their homotopical and algebraic properties are related. For instance, recall that if $G$ is a compact Lie group, then any element of its fundamental group $\pi_1(G)$ is represented by a continuous  homomorphism $S^1\to G$. Therefore, it is natural to ask whether the same holds for symplectomorphism groups, namely, 

\begin{question}\label{MainQuestion} Suppose that $\pi_1(\Symp(M,\omega))$ is nontrivial. Is every element represented by a continuous homomorphism $S^1 \mapsto \Symp(M, \omega)$ (i.e. a circle action on $M$)? If not, can we characterize homotopy classes that are represented by circle actions?
\end{question}

In~\cite{Ke}, J. K\c{e}dra showed that the answer to the first part of the question is negative in general. 

\begin{theorem}[\cite{Ke}] \label{Kedra}
Let  $(M, \omega)$ be a symplectic blow-up (in a small ball) of a closed simply connected K\"ahler surface, which is neither a rational nor a ruled surface up to blow-up. Then $(M, \omega)$ admits no symplectic circle action although $\pi_1(\Symp(M,\omega))$ is nontrivial. 
\end{theorem}

A concrete example is obtained by taking a K3 surface with any symplectic form.  Another type of example was found by O. Buse in her work on symplectomorphism groups of irrational ruled surfaces~\cite[Proposition 3.3]{Bus}. More precisely, although $\mathbb T^2 \times S^2$ admits Hamiltonian circle actions, she showed that there is an element $\gamma \in \pi_1 (\Ham (\mathbb T^2 \times S^2))$ for which the rational Samelson product $[\gamma, \gamma]_\Q$ does not vanish, which implies that $\gamma$ cannot be  represented by such an action. 

In the present paper, we consider the symplectic rational surfaces $(\bbcp^2\#\,n\overline{ \bbcp}\,\!^2, \omega)$. For $1\leq n\leq 5$, the topological group $\Symp_h(\bbcp^2\#\,n\overline{ \bbcp}\,\!^2, \omega) $ of symplectomorphisms that act trivially on homology, has been studied by several authors  (see \cite{AbGrKi}, \cite{AbrMcD}, \cite{Pin}, \cite{AnjPin}, \cite{AnjEden}, \cite{Eva}, \cite{Seidel}, \cite{LiLiWu2}). In the case $n=5$, P. Seidel~\cite{Seidel} and J. Evans~\cite{Eva} proved that, in the monotone case, this group is homotopy equivalent to the group of orientation-preserving diffeomorphisms of $S^2$ preserving 5 points. Recently, J. Li, T. J. Li and W. Wu in \cite{LiLiWu2} completely determined the group of connected components of  
$\Symp_{h}(\bbcp^2\#\,5\overline{ \bbcp}\,\!^2, \omega)$, called the Torelli symplectic mapping class group, 
 as well as the rank of its fundamental group, for any given symplectic form $\omega$. In order to explain their results which are of interest to us we first recall the definition of reduced forms, and postpone further details to Section \ref{reduceddefinitions}. 

\noindent For $\X_n = \CP^2 \#\, n\overline{ \bbcp}\,\!^2$ let $\{ L, V_1, \hdots, V_n\}$ be a standard basis for  $H_2(\X_n; \Z)$, where $L$ is the class representing a line, and the $V_i$ are the exceptional classes. 

\begin{definition}\label{reducedform}
Consider $\X_n$ with the standard basis $\{ L, V_1, \hdots, V_n\}$ of $H_2(\X_n; \Z)$. Given a symplectic form $\omega$ such that each class $ L, V_1, \hdots, V_n$ has $\omega$-area $\nu, \delta_1, \hdots, \delta_n$, then $\omega$ is called {\bf reduced}~if 
\begin{equation}\label{reduced}
 \nu  > \delta_1 \geq  \hdots \geq \delta_n > 0 \quad \mbox{and} \quad  \nu \geq \delta_1 +\delta_2 + \delta_3. 
\end{equation}
\end{definition}  

We recall in Section  \ref{reduceddefinitions} why any symplectic form on $\X_n$ is diffeomorphic to a reduced one. Note that diffeomorphic symplectic forms yield homeomorphic symplectomorphism groups. Therefore it suffices to understand the symplectomorphism group $\Symp(\X_n, \omega)$ for any reduced form $\omega$. In this section we also recall  that $(\X_n, \omega)$ can be naturally identified  with  $(n-1)$-point blow-ups of the manifold $ (S ^{2} \times S ^{2},  \mu \sigma \oplus \sigma) $, denoted by $\Muc$, where $\sigma$ denotes the standard symplectic form on $S^2$ that gives area 1 to the sphere,  $\mu \geq 1$, and  $c_1, \hdots, c_{n-1}$ denote the capacities of the blow-ups. 

If $n \leq 3$ it is well known that the group $\Symp_h (\Mucc)$ is connected (see for example \cite{LiLiWu}) and it follows from \cite{Pin}, \cite{AnjPin} and \cite{AnjEden} that the fundamental group of $\Symp_{h}(\Mucc)$ is always generated by Hamiltonian circle actions. More precisely, in these cases, the full rational homotopy type of $\Symp_h (\Mucc)$, with $n \leq 3$, is generated by loops in  the fundamental group, represented by circle actions, via Samelson products. On the other hand it was shown by J. Li and T.-J. Li in \cite{LiLi} that if $n \leq 3$ then $\pi_1(\Symp_h (\Mucc))$ is a free abelian group.
  
In \cite{LiLiWu2} the authors show that besides the monotone case, there is a one dimensional family of symplectic manifolds $\Mucccc$ for which the Torelli symplectic mapping class group $\pi_0(\Symp_h(\Mucccc))$ is isomorphic to $\pi_0(\Diff^+(S^2,4))$, where $\Diff^+(S^2,4)$  is the group of orientation-preserving diffeomorphisms of $S^2$ preserving 4 points. This family is defined by the values $ \mu >1$ and $c_i= 1/2$ for all $ i \in \{1,2,3,4\}$. From now on we use the notation $\Muhalf$ to denote this family of symplectic manifolds. 
For all the remaining symplectic forms, the group $\pi_0(\Symp_h(\Mucccc))$ is trivial. Moreover, in  \cite[Section 5.3]{LiLiWu2} the authors show that $\pi_1(\Symp_h(\Muhalf)) = \Z^5$, hence the fundamental group is a free abelian  group.   

In this note we study generators of  the fundamental group of $\Symp_h(\Muhalf)$. Our main result is the following theorem that gives a negative answer to the first part of Question~\ref{MainQuestion}. 
\begin{theorem} \label{main}
If $1 < \mu  \leq \frac32$ then  the set of all  Hamiltonian circle actions generates a proper subgroup of rank 4 in the fundamental group of $\Symp_h(\Muhalf)$. Moreover, if $\mu >\frac32$ then $\pi_1( \Symp_h(\Muhalf))\otimes \Q$ is generated by Hamiltonian circle actions.
\end{theorem}

To our knowledge, this is the first example of symplectic rational surface where the fundamental group of $\Symp (\X_n, \omega)$ is not generated by circle actions. In Section 5 we discuss the existence of more symplectic forms $\omega$ in $\X_5$ for which  a similar phenomenon may occur. 

\begin{remark}
Although there is a generator of $\pi_1( \Symp_h(\Muhalf))\otimes \Q$ which cannot be represented by a Hamiltonian circle action when $1 < \mu  \leq \frac32$, one can find its quantum homology representative (see Proposition \ref{propquantumrepresentative}). 
\end{remark}

Our techniques allows us to completely describe the elements of $\pi_1(\Symp_h(\Muhalf))$ that are represented by Hamiltonian circle actions, answering the second part of Question~\ref{MainQuestion} as well. In particular, we obtain the following result.

\begin{theorem}\label{main2}
For any value of $\mu >1$, there exist infinitely many homotopy classes in the fundamental group $\pi_1( \Symp_h(\Muhalf))$ that cannot be represented by Hamiltonian circle actions. 
\end{theorem}

As a final remark, it seems very likely that Theorem~\ref{main} holds not only rationally but also in the integer case, that is, that the fundamental group $\pi_1(\Symp(\Muhalf))$ is generated by Hamiltonian circle actions whenever $\mu > \frac32$. Although we are not able to prove this stronger claim, we note that our quantum homology calculations imply the existence of five circle actions representing homotopy classes in $\pi_1(\Symp(\Muhalf))$ that can be shown to be not only linearly independent but also primitive.

{\it Organization of the paper}.
In Section 2 we review the main tools we need to prove the theorems above, namely Karshon's classification of Hamiltonian circle actions, Delzant's classification of toric manifolds, and the definitions of the quantum homology ring of a symplectic manifold and of the Seidel morphism. We also recall the  results of \cite{LiLiWu2} regarding $\pi_0$ and $\pi_1$ of the symplectomorphism group $\Symp_h(\Mucccc)$ relevant to our work.  In Section 3 we give a presentation of the quantum homology ring $QH_\ast(\Mucccc)$, that follows from applying the formulas for the quantum product on a rational surface  obtained by B. Crauder and R. Miranda in \cite{CM}.  We dedicate Section 4 to obtaining our main results:  we choose a tentative set of five generators of the rational fundamental group and prove, using the Seidel morphism, that these elements are linearly independent. We conclude this section giving  a classification of all Hamiltonian circle actions on $\Muhalf$, which allows us to determine which homotopy class of loops can be represented by such an action.

Finally, in the last section we propose some further questions that arose naturally on the course of this work. Appendix \ref{QHcomp} contains computations on the quantum ring  while appendix \ref{auxiliaryrelations}  is devoted to the proof of an auxiliary relation between elements in $\pi_1(\Symp_h(\Mucccc))$. 

{\it Acknowledgments}. The first and third authors are very grateful to Jun Li, Tian-Jun Li and Weiwei Wu for showing and explaining to them their results in a preliminary version of \cite{LiLiWu2}. 
The first author also would like to warmly thank Dusa McDuff for useful and enlightening discussions. The second and fourth authors kindly thank the support of the Gulbenkian Foundation which gave them the opportunity to take part on this work. Finally, we would like to thank the referees for their hard and thorough work in reviewing the paper. We greatly appreciate their comments and questions. Moreover, we think that the modifications based on their suggestions and corrections have vastly improved the paper. 

%
%

\section{Background}\label{sec m1: background}

\subsection{Hamiltonian circle actions, decorated graphs and Delzant polygons}

In the forthcoming sections we will study loops in the fundamental group of $\Symp_h(\Mucccc)$. Since these loops will appear as Hamiltonian circle actions, we will make extensive use of Karshon's classification of Hamiltonian circle actions and Delzant's classification of toric actions on symplectic manifolds. For convenience, we give a quick overview on how these classifications work.

Karshon's classification~\cite{Kar} yields a bijection between certain {\it decorated graphs} and $4$-tuples $(M^4,\om,\rho,\Phi)$ consisting of a symplectic $4$-manifold $(M^4,\om)$, and an effective Hamiltonian circle action $\rho$ with a given moment map $ \Phi : M \rightarrow \mathbb{R}$. Given such a tuple $ (M^4, \omega, \rho, \Phi) $, the associated decorated graph is constructed as follows. Each component $C$ of the fixed point set is either a single point or a symplectic surface, and fixed points on which the moment map is not extremal are isolated. For each such component $C$, there is a vertex $\langle C \rangle $, labeled by the real number $\Phi(C)$. A vertex that corresponds to a fixed symplectic surface is said to be "fat" and is given two more labels: the area label $\frac{1}{2 \pi} \int _{C} \omega$, and the genus $g$ of the surface. A \emph{$\mathbb{Z}_{k}$-sphere} is a gradient sphere in $M$ on which $ S^{1} $ acts with isotropy $ \mathbb{Z}_{k} $, $k\geq 2$. For each $\mathbb{Z}_{k}$-sphere containing two fixed points $ p $ and $ q $, the graph has an edge connecting the vertices $ \langle p \rangle $ and $ \langle q \rangle $ labeled by the integer $ k $.

Labeled graphs associated to effective Hamiltonian circle actions are characterized by the following properties. If we order the vertices according to their moment map labels, then
\begin{itemize}
\item there are exactly two extremal vertices;
\item fat vertices are extremal, and if the graph contains two fat vertices, then their genus label must coincide;
\item the area label of any fat vertex must be strictly positive;
\item a vertex is connected to no more than two edges, and no edge is connected to a fat vertex;
\item the moment map labels must be strictly monotone along each chain of edges;
\item if $e_{1},\ldots,e_{\ell}$ is a chain of edges, and if $k_{1},\ldots,k_{\ell}$ are the orders of their stabilizers, then $\gcd(k_{i}, k_{i+1}) = 1$ for $i = 1,\ldots\ell-1$, and $(k_{i-1} + k_{i+1})/k_{i}$ is an integer for $i= 2,\ldots,\ell-1$.
\end{itemize}
We call such graphs \emph{admissible}. 

\begin{theorem}(Karshon \cite{Kar})\label{graphiso}
Each $4$-tuple $(M^4,\om,\rho,\Phi)$ corresponds to a unique admissible labelled graph. Conversely, to each admissible labelled graph corresponds a $4$-tuple $(M^4,\om,\rho,\Phi)$ that is unique up to $S^1$-equivariant symplectomorphisms preserving the moment map.
\end{theorem}

Furthermore, it can be shown that each Hamiltonian action on a $4$-dimensional manifold can be obtained from a circle action on a symplectic ruled surface by performing a sequence of $S^1$-equivariant symplectic blow-ups. At the graph level, equivariant symplectic blow-ups correspond to the simple transformations  pictured in Figure~\ref{Karshon1} and~\ref{Karshon2}. Together with Lalonde-McDuff-Li-Liu's uniqueness theorem~\cite{LalMcD, LiLiu} stating that any two cohomologous symplectic forms on blow-ups of ruled surfaces are diffeomorphic, this gives an effective algorithm to enumerate all effective circle actions on any given $4$-manifold.

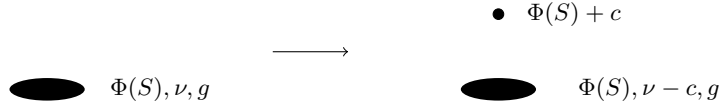
\begin{figure}[thp]
\begin{center}
\begin{tikzpicture}[font=\small]

	\fill[black] (2,-2) ellipse (0.5 and 0.15);
	\node at (3.5,-2) {$\Phi(S),\nu , g $};

	\draw[->] (5,-1.5) -- (6,-1.5);

	\filldraw [black] (8,-1) circle (2pt);
	\node at (9,-1) {$\Phi(S) + c$};
	\fill[black] (8,-2) ellipse (0.5 and 0.15);
	\node at (10,-2) {$\Phi(S),\nu - c, g$};

\end{tikzpicture}
\end{center}
\caption{Blowing-up at a point inside an invariant surface at the minimum value of $ \Phi $}\label{Karshon1}
\end{figure}

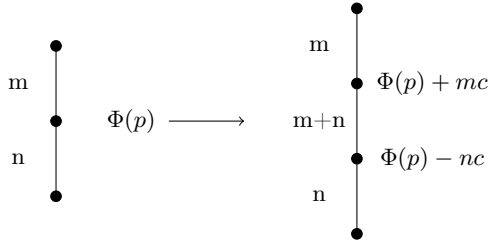
\begin{figure}[thp]
\begin{center}
\begin{tikzpicture}[font=\small]

	\filldraw [black] (2,1.5) circle (2pt);
	\filldraw [black] (2,0.5) circle (2pt);
	\node at (3,0.5) {$ \Phi(p) $};
	\filldraw [black] (2,-0.5) circle (2pt);

	\draw (2,1.5) -- (2,0.5);
	\node at (1.5,1) {$m$};
	\draw (2,0.5) -- (2,-0.5);
	\node at (1.5,0) {$n$};

	\draw[->] (3.5,0.5) -- (4.5,0.5);

	\filldraw[black] (6,2) circle (2pt);
	\filldraw [black] (6,1) circle (2pt);
	\node at (7,1) {$ \Phi(p) + m c $};
	\filldraw [black] (6,0) circle (2pt);
	\node at (7,0) {$ \Phi(p) - n c $};
	\filldraw [black] (6,-1) circle (2pt);

	\draw (6,2) -- (6,1);
	\node at (5.5,1.5) {$m$};
	\draw (6,1) -- (6,0);
	\node at (5.5,0.5) {$m+n$};
	\draw (6,0) -- (6,-1);
	\node at (5.5,-0.5) {$n$};
\end{tikzpicture}
\end{center}
\caption{Blowing-up at an interior fixed point}\label{Karshon2}
\end{figure}

Since we are mainly interested in the continuous map $\rho:S^1\to\Ham(M,\om)$, we do not need to keep track of the moment map associated to a Hamiltonian circle action. As any two moment maps only differ by a constant, we can either consider graphs only up to a uniform translation of their moment map labels, or normalize the moment map by setting $\min_{x\in M} \Phi(x) =0$. Finally, note that the reparametrization of the circle $t\mapsto -t$ corresponds to changing the signs of the moment map labels.

There is an analogous classification of Hamiltonian toric actions that we now briefly describe in the special case of $4$-manifolds. Given a $4$-tuple $(M^{4},\om,\rho,\Phi)$ consisting of a symplectic $4$-manifold $(M,\om)$, an effective Hamiltonian toric action $\rho:T^2\to \Ham(M,\om)$, and a moment map $\Phi:M\to\mathfrak{t}^*\simeq \R^2$, the image $\Phi(M)$ is always a \emph{Delzant polygon}, that is, a polygon satisfying the following three properties: 
\begin{itemize}
\item simplicity, i.e., there are two edges meeting each vertex; 
\item rationality, i.e., the edges meeting at the vertex $ p $ are rational in the sense that each edge is of the form $ p + t u_{i} $, $ t \in [0,\ell_i] $, where $ \ell_i \in \R$ and $ u_{i} \in \mathbb{Z}^{2} $;
\item smoothness, i.e., for each vertex, the corresponding $ u_{1} $, $ u_{2} $ can be chosen to be a $ \mathbb{Z} $-basis of $ \mathbb{Z}^{2} $.
\end{itemize}
Moreover, the pre-image in the manifold of a vertex of the polygon  $ \Phi(M) $ is a fixed point for the torus action while the pre-image of an edge is an invariant 2-sphere. The pre-image of the interior of the polygon consists of free torus orbits. These facts are explained in \cite{De}.

Delzant's classification \cite{De} states that equivalence classes of $4$-tuple $(M^{4},\om,\rho,\Phi)$ up to equivariant symplectomorphisms that preserve the moment maps, are classified by Delzant polygons in $\R^2$. 
If we disregard the moment map and only considers an effective toric action as an injective homomorphism $\rho:\mathbb{T}^2\to\Ham(M,\om)$, it is natural to declare two actions as equivalent if they only differ by a reparametrization of the torus or by a conjugation by an element of $\Symp(M,\om)$. In this setting, the classification theorem yields a bijection
\begin{gather*}
\{\text{Conjugacy classes of toric actions on}~4\text{-manifolds up to reparametrizations}\}\\
\updownarrow\\
\{\text{Delzant polygons in~}\R^2 \text{~up to~} \AGL(2;\Z) \text{~action}\}
\end{gather*}

If we restrict the action to the sub-circle $\{ e\} \times S^1$, we get  a compact four dimensional $S^1$-space.  The moment map for the $S^1$-action is the composition of the $\mathbb{T}^{2}$-moment map with the projection $\R^2 \to \R$ to the second coordinate. The fixed surfaces are the pre-images, under the $\mathbb{T}^{2}$-moment map, of the horizontal edges of the Delzant polygon. Such a surface has genus zero, and its normalized symplectic area is equal to the length of the corresponding horizontal edge. The isolated fixed points are the pre-images of those vertices of the polygon that do not lie on horizontal edges. The $\Z_k$-spheres, $k\geq 2$,  are the pre-images of edges with slope $\pm k/b$ in reduced form, where $b$ is relatively prime to $k$. With this information we can construct the graph for the $S^1$-space out of the Delzant polygon. This  is explained by Karshon in \cite[Section 2.2]{Kar}. Note that, similarly, we can restrict the toric action to the sub-circle $S^1 \times \{ e\}$ in order to obtain another compact four dimensional $S^1$-space. In this case the moment map for the $S^1$-action is the composition of the $\mathbb{T}^{2}$-moment map with the projection $\R^2 \to \R$ to the first coordinate. This relation between polygons and decorated graphs will be particularly useful in the following subsections.

\subsection{Quantum homology and  Seidel morphism}\label{DefQuantumHomology}

Following \cite{McDSal} consider the (small) quantum homology ring $\QH_*(M; \Pi) =  H_*(M, \Q) \otimes_\Q \Pi$ with coefficients in the ring $\Pi:= \Pi^{\mathrm{univ}}[q,q^{-1}]$ where the  $q$ is a polynomial variable of degree 2 and $ \Pi^{\mathrm{univ}},$ called the universal Novikov ring,  is a generalised Laurent series ring in a variable $t$ of degree 0: 
\begin{align}
  \label{eq:pi-univ}
  \Pi^{\mathrm{univ}}:= \left\{ \sum_{\kappa \in \R} r_\kappa t^\kappa \,\big|\, r_\kappa \in \Q, \ \#\{ \kappa > c\mid r_\kappa \neq 0\} < \infty, \forall c \in \R \right \} \,.
\end{align}
The quantum homology $\QH_*(M; \Pi)$ is $\Z$--graded  so that $\deg (a \otimes q^d t^\kappa)= \deg (a) +2d$ with $a \in H_*(M)$. \ The   {\it quantum intersection product} $a*b \in \QH_{i+j -\dim M}(M; \Pi)$, of classes $a \in H_i(M)$ and $b \in H_j(M)$ has the form 
$$a*b = \sum_{B \in H_2^S(M;\Z)} (a*b)_B \otimes q^{-c_1(B)}t^{-\omega(B)},$$
where $H_2^S(M;\Z)$ is the image of $\pi_2(M)$ under the Hurewicz map. The homology class $(a*b)_B \in H_{i+j-\dim M +2c_1(B)}(M)$ is defined by the requirement that 
$$(a*b)_B \cdot_M c = \GW^M_{B,3}(a,b,c) \quad \mbox{ for all } c \in H_*(M).$$   
In this formula $ \GW^M_{B,3}(a,b,c) \in \Q$ denotes the Gromov--Witten invariant that counts the number of spheres in $M$ in class $B$ that meet cycles representing the classes $a,b,c \in H_*(M)$. The product $*$ is extended to $\QH_*(M)$ by linearity over $\Pi$, and is associative (see \cite[Proposition 11.1.9]{McDSal} for a proof of this fact). It also respects the $\Z$--grading and gives $\QH_*(M)$ the structure of a graded commutative ring, with unit $[M]$.

The {\it Seidel morphism} is a homomorphism $\Ss$ from $\pi_1 (\Ham (M, \omega))$ to the degree $2n$ multiplicative units $\QH_{2n}(M)^{\times}$ of the small quantum homology, first  introduced by Seidel in \cite{Se}.  One way of thinking of it is to say that it  "counts" pseudo-holomorphic sections of the bundle $M_\Lambda  \to S^2$ associated to the loop $\Lambda \subset \Ham(M,\omega)$ via the clutching construction (as in \cite[Section 2]{McDTolm}): let $(M, \omega)$ be a closed symplectic manifold and $\Lambda=\{ \Lambda_\theta \}$ be a loop in $\Ham(M, \omega)$ based at identity. Denote by $M_\Lambda$ the total space of the fibration over $S^2$ with fiber $M$ which consists of two trivial fibrations over 2--discs, glued along their boundary via $\Lambda$. Namely, we consider $S^2$ as the union of the two 2-discs $D_0$ and $D_\infty$ such that $D_0$ is the closed unit disc centered at 0 in the Riemann sphere $S^2 = \C \cup \{ \infty\}$ and $D_\infty$ is another copy of this disc, embedded in $S^2 = \C \cup \{ \infty\}$, via the orientation reversing map $r\, e^{i\theta} \mapsto r^{-1} \, e^{i\theta}$. 
The total space is
\begin{align*}
  M_\Lambda =  {\big(M \times D_0 \big) \bigsqcup  \big(M \times D_\infty\big)}{\slash\sim} \quad \mbox{with } (e^{2i\pi\theta}, \Lambda_\theta(x) )_0 \sim (e^{2i\pi\theta},x)_\infty.
\end{align*}
This construction only depends on the homotopy class of $\Lambda$. Moreover, the family (parametrized by $S^2$) of symplectic forms of the fibers, can be extended to give a closed form, $\Omega$, on $M_\Lambda$ (see Sternberg \cite{Stern}). By adding to $\Omega$ the pullback of a suitable area form on the base we get a nondegenerate  form. More precisely,  $\omega_{\Lambda,\kappa} = \Omega + \kappa\cdot \pi^*(\omega_0)$ is symplectic, where $\omega_0$ is the standard symplectic form on $S^2$ (with area 1), $\pi$ is the projection to the base of the fibration and $\kappa$ a big enough constant to make $\omega_{\Lambda,\kappa}$ non-degenerate. (Once chosen, $\kappa$ will be omitted from the notation.)

So we end up with the following Hamiltonian fibration: $$\xymatrix{\relax (M, \omega) \ar@{^(->}[r] & (M_\Lambda,\omega_\Lambda) \ar[r]^{\pi} & (S^2, \omega_0).}$$
In \cite{McDTolm}, McDuff and Tolman observed that, when $\Lambda$ is a circle action (with associated moment map $\Phi_\Lambda$), the clutching construction can be simplified since, then, $M_\Lambda$ can be seen as the quotient of $M \times S^3$ by the diagonal action of $S^1$, $e^{2\pi i \theta}\cdot(x,(z_1,z_2))=(\Lambda_\theta(x),(e^{2\pi i \theta} z_1, e^{2\pi i \theta} z_2))$. 
The symplectic form also has an alternative description in $M \times_{S^1} S^3$. Let $\alpha \in \Omega^1(S^3)$ be the standard contact form on $S^3$ such that $d\alpha = \chi^*(\omega_0)$ where $\chi: S^3 \to S^2$ is the Hopf map and $\omega_0$ is the standard area form on $S^2$ with total area 1. For all $c\in\R$, $\omega+cd\alpha -d(\Phi_\Lambda\alpha)$ is a closed 2--form on $M \times S^3$ which descends through the projection, $p:  M \times S^3 \rightarrow M \times_{S^1} S^3$, to a closed 2--form on $M_\Lambda$:
\begin{align}
  \label{eq:omega-c}
  \omega_c = p( \omega + c d\alpha -d(\Phi_{\Lambda} \alpha))
\end{align}
which extends $\Omega$. Now, if $c>\max \,\Phi_\Lambda$, $\omega_c$ is non-degenerate and coincides with $\omega_{\Lambda,\kappa}$ for some big enough $\kappa$.

A quantum class lying in the image of $\mathcal S$ is called a \emph{Seidel element}. In \cite{McDTolm}, McDuff and Tolman were able to calculate the leading term of Seidel's elements associated to Hamiltonian circle actions whose maximal fixed point component, $F_{\max}$, is semifree, that is, the action is semifree on some neighborhood of  $F_{\max}$. Recall that a circle action is semifree if the stabilizer of every point is trivial or the whole circle.  Moreover, when the codimension of $F_{\max}$ is 2, their result immediately ensures that if there exists an invariant almost complex structure $J$ on $M$ so that $(M,J)$ is Fano, i.e. so that there are no $J$--pseudo-holomorphic spheres in $M$ with non-positive first Chern number, all the lower order terms vanish. In the presence of $J$--pseudo-holomorphic spheres with vanishing first Chern number, there is a priori no reason why arbitrarily large multiple coverings of such objects should not contribute to the Seidel elements.  In fact, as explained in \cite{AL}, when the almost complex manifold $(M,J)$  is only NEF, i.e. $c_1(B) \geq 0$ for every class $B \in H_2(M)$ with a $J$-holomorphic sphere representative, and not Fano, then  there are indeed infinitely many contributions to the Seidel elements. More precisely,  it is shown in \cite{AL} that if $M$ is a 4-toric manifold then these quantum classes can still be expressed by explicit closed formulas.  Moreover, these formulas only depend on the relative position of representatives of elements of $\pi_2(M)$ with vanishing first Chern number as edges of the moment polygon. In particular, they are directly readable from the polygon. 

We now recall  the precise results from \cite{AL} that we will use in the forthcoming sections. Consider 
a 4--dimensional closed symplectic manifold $(M,\omega)$, endowed with a toric structure $(\rho,\Phi)$.  Suppose the associated Delzant polygon  $P = \Phi(M)$ has $m \geq 4$ edges, and consider a Hamiltonian circle action $\Lambda$ on $(M, \omega)$, with moment map $\Phi_\Lambda$, such that $\Lambda$ is a subcircle of the toric action $\rho:\mathbb{T}^{2}\to\Ham(M,\om)$.

We assume additionally, that the fixed point component of $\Lambda$ on which $\Phi_\Lambda$ is maximal is a 2--sphere, $F_{\max} \subset M$, whose momentum image is an edge $D$ of $P$. We denote by $A \in H_2(M;\Z)$ the homology class of $F_{\max}$ and by $\Phi_{\max} = \Phi_\Lambda(F_{\max})$.

In this case, McDuff--Tolman's result  \cite[Theorem 1.10]{McDTolm} ensures that the Seidel element associated to $\Lambda$ is 
\begin{align}\label{McDuff-Tolman-Formula}
  \mathcal S(\Lambda) = A\otimes q t^{\Phi_{\max}} + \sum_{B \in H_2^S\!(M;\Z)^{>0}} a_B \otimes q^{1-c_1(B)} t^{\Phi_{\max}-\omega(B)} 
\end{align}
where $H_2^S(M;\Z)^{>0}$ consists of the spherical classes of positive symplectic area, that is, $\omega(B)>0$ and $a_B\in H_*(M;\Z)$ denotes the contribution of $B$. As mentioned above, when $(M,J)$ is Fano for some $S^1$- invariant $\omega$-compatible almost complex structure $J$  then all the lower order terms vanish and we end up with $ \mathcal S(\Lambda)=A \otimes qt^{\Phi_{\max}}$. 

In the non-Fano case, one has to be careful about the number and relative position of edges, in the vicinity of $D$, corresponding to spheres in $M$ with vanishing first Chern number. We denote the number of such edges by $\conezero$. We denote the edges and the corresponding homology classes in $M$ in a cyclic way, that is, $D$, which we denote by $D_m$ below, has neighbooring edges $D_{m-1}$ on one side and $D_{m+1}=D_1$ on the other, and they respectively induce classes $A_m$, $A_{m-1}$, and $A_{m+1}=A_1$ in $H_2(M;\Z)$.

 Figure \ref{fig:cases} shows the relevant parts of the different polygons we need to consider. Dotted lines represent edges with positive first Chern number and we indicate near each edge with non-trivial contribution the homology class of the corresponding sphere in $M$. For example, in Case \textit{(3c)}, only three homology classes contribute: $A_{m-1}$, $A_m$, and $A_1$; $A_{m-1}$ and $A_1$ have vanishing first Chern number while $c_1(A_m) \neq 0$.

\begin{figure}[thp]
\begin{center}
\begin{tikzpicture}[scale=0.7, roundnode/.style={circle, draw=black!80, thick, minimum size=7mm}, font=\footnotesize]
 
    \draw (0,-0.3) -- (17,-0.3); 
    \draw (4,0) -- (4,7.8); 
    \draw (8,0) -- (8,7.8); 
   
   \draw[dotted,thick] (1.5,1.5) --(1.5,2.8);
   \draw[dotted,thick] (1.5,1.5) --(2.5,0.5);
   \draw[dotted,thick] (1.5,2.8) --(2.5,3.8);
   \node at (1.1,2.2) {$A_m$};
   \node at (2.5,2.2) {$(1)$};  
   \node at (1.5, 0) {$\conezero =0$};
   
   \draw[dotted,thick] (5,1.5) --(5,2.8);
   \draw[dotted,thick] (5,1.5) --(6,0.5);
   \draw[thick] (5,2.8) --(6,3.8);
   \draw[dotted,thick] (7.2,3.8) --(6,3.8);
   \node at (4.6,2.2) {$A_m$};
    \node at (5.1,3.4) {$A_1$};
   \node at (6.5,2.2) {$(3a)$};  
   \node at (6, 0) {$1$};
   
   \draw[thick] (5,5.5) --(5,6.8);
   \draw[dotted,thick] (5,5.5) --(6,4.5);
   \draw[dotted,thick] (5,6.8) --(6,7.8);
   \node at (4.6,6.2) {$A_m$};
   \node at (6.5,6.2) {$(2a)$};

  \draw[thick] (9,5.5) --(9,6.8);
   \draw[dotted,thick] (9,5.5) --(10,4.5);
   \draw[thick] (9,6.8) --(10,7.8);
    \draw[dotted,thick] (10,7.8) --(11.2,7.8);
   \node at (8.6,6.2) {$A_m$};
   \node at (9.1,7.4) {$A_1$};
   \node at (10.5,6.2) {$(2b)$};  

  \draw[dotted,thick] (9,1.5) --(9,2.8);
   \draw[dotted,thick] (9,1.5) --(10,0.5);
   \draw[thick] (9,2.8) --(10,3.8);
   \draw[thick] (11.2, 3.8) --(10,3.8);
  \draw[dotted,thick] (11.2,3.8) --(12.2,2.8);
   \node at (8.6,2.2) {$A_m$};
    \node at (9.1,3.4) {$A_1$};
  \node at (10.6,4.1) {$A_2$};
   \node at (10.5,2.2) {$(3b)$};  
   \node at (12.5, 0) {$2$};

   \draw[dotted,thick] (14,1.5) --(14,2.8);
   \draw[thick] (14,1.5) --(15,0.5);
   \draw[thick] (14,2.8) --(15,3.8);
   \draw[dotted,thick] (16.2, 3.8) --(15,3.8);
   \draw[dotted,thick] (16.2, 0.5) --(15,0.5);
   \node at (13.6,2.2) {$A_m$};
    \node at (14.1,3.4) {$A_1$};
  \node at (13.9,1) {$A_{m-1}$};
   \node at (15.7,2.2) {$(3c)$};

\end{tikzpicture}
\end{center}
\caption{Cases appearing in Theorem \ref{Seidelelements}}
 \label{fig:cases}
 \end{figure}
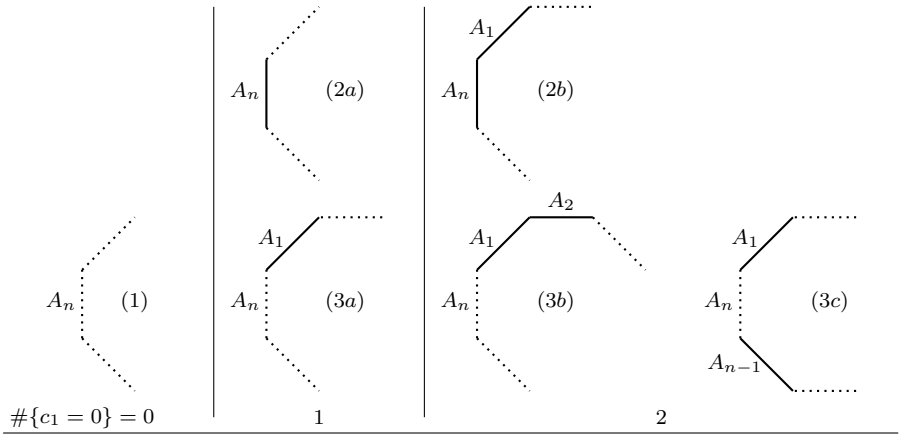

 Now, the following theorem gives the explicit expression of the Seidel element associated to $\Lambda$ when $\conezero \leq 2$.
 
\begin{theorem}[\cite{AL}, Theorem 4.5] \label{Seidelelements}
Let $(M,\omega)$ be a 4-dimensional closed symplectic manifold, endowed with a toric structure.  With the notation described above, assume the Delzant polygon P has $m \geq 4$ edges, and  that the  fixed point component of the Hamiltonian action $\Lambda$ on which $\Phi_\Lambda$ is maximal is a 2--sphere, $F_{\max} \subset M$. Additionally, assume  that $(M,J)$ is NEF,  for some $S^1$-invariant $\omega$-compatible almost complex structure $J$.  Then, in the cases described by Figure \ref{fig:cases}, the Seidel element associated to $\Lambda$ is

\begin{itemize}
\item[\textit{(1)}]$ \displaystyle \mathcal S(\Lambda)=A_m\otimes qt^{\Phi_{\max}}$
\item[\textit{(2a)}] $  \displaystyle \mathcal S(\Lambda)=A_m\otimes q\,\frac{t^{\Phi_{\max}}}{1-t^{-\omega(A_m)}}$
 \item[\textit{(2b)}]  $  \displaystyle \mathcal S(\Lambda)=\left( A_m\otimes q\,\frac{t^{\Phi_{\max}}}{1-t^{-\omega(A_m)}}-A_1\otimes q\,\frac{t^{\Phi_{\max}-\omega(A_1)}}{1-t^{-\omega(A_1)}} \right) \frac{1}{1-t^{-\omega(A_m)-\omega(A_1)}} $
\item[\textit{(3a)}]$  \displaystyle \mathcal S(\Lambda)=A_m\otimes qt^{\Phi_{\max}}-A_1\otimes q\,\frac{t^{\Phi_{\max}-\omega(A_1)}}{1-t^{-\omega(A_1)}} $
\item[\textit{(3b)}] $  \displaystyle \mathcal S(\Lambda)  =  A_m\otimes qt^{\Phi_{\max}}-A_1\otimes q\,\frac{t^{\Phi_{\max}-\omega(A_1)}}{1-t^{-\omega(A_1)}}  \\
 \hspace{.5cm} -\left(A_1\otimes q\,\frac{t^{\Phi_{\max}}}{1-t^{-\omega(A_1)}}-A_2\otimes q\,\frac{t^{\Phi_{\max}-\omega(A_2)}}{1-t^{-\omega(A_2)}}\right)\frac{t^{-\omega(A_1)-\omega(A_2)}}{1-t^{-\omega(A_1)-\omega(A_2)}} $
\item[\textit{(3c)}] $  \displaystyle \mathcal S(\Lambda)=A_m\otimes qt^{\Phi_{\max}}-A_{m-1}\otimes q\,\frac{t^{\Phi_{\max}-\omega(A_{m-1})}}{1-t^{-\omega(A_{m-1})}}-A_1\otimes q\,\frac{t^{\Phi_{\max}-\omega(A_1)}}{1-t^{-\omega(A_1)}}.$
\end{itemize}
\end{theorem}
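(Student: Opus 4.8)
The plan is to deduce the six formulas from McDuff and Tolman's localization description of Seidel elements in \cite{McDTolm}, refined by an explicit analysis of the pseudo-holomorphic sections of the Hamiltonian fibration $M_\Lambda \to S^2$ in the NEF case. Since $F_{\max}$ is a symplectic $2$--sphere of codimension $2$ on which $\Lambda$ acts semifreely, \cite{McDTolm} applies: for a generic $S^1$--invariant almost complex structure $J$ on $M_\Lambda$ whose restriction to the fibre is a NEF structure on $M$, the element $\mathcal S(\Lambda)$ is computed by counting stable $J$--holomorphic sections of $\pi : M_\Lambda \to S^2$, with leading term $A_n \otimes q\, t^{\Phi_{\max}}$ contributed by the unique horizontal section lying over $F_{\max}$. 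The first step is to show that only bubble trees of vanishing first Chern number contribute: a stable section splits as the horizontal section together with a tree of vertical $J$--spheres; NEF-ness forces every vertical class $B$ to satisfy $c_1(B) \ge 0$, and any component with $c_1(B) > 0$ would give a term of $q$--degree $1 - c_1(B) < 1$, hence of $\QH$--degree $< 2n$, which a dimension count on the (transversally cut out) moduli of sections rules out when $D$ is a facet. Thus $\mathcal S(\Lambda)$ is homogeneous of degree $2n$ and all its lower order terms arise from vertical bubble trees built from spheres $B$ with $c_1(B) = 0$.

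The second step is to identify these classes from the polytope $P$. For a toric $4$--manifold the facet $D_i$ carries a sphere class $A_i$ with $c_1(A_i) = A_i \cdot A_i + 2$, so $c_1(A_i) = 0$ exactly when $D_i$ is a $(-2)$--facet, and by convexity of $P$ such facets occur in chains. The vertical bubble trees attachable to the section over $F_{\max}$ are chains of $(-2)$--facets that are $D_n$ itself or reach $D_n$ through $D_{n-1}$ or $D_1$; the number of facets involved is precisely $\conezero$. Enumerating, for $\conezero \le 2$, the possible relative positions --- none (Case $(1)$); one, equal to $D_n$ (Case $(2a)$) or to $D_1$ with $c_1(A_n) \neq 0$ (Case $(3a)$); two, forming a chain through $D_n$ (Cases $(2b)$, $(3b)$) or flanking $D_n$ on both sides (Case $(3c)$) --- recovers exactly the configurations of Figure \ref{fig:cases}, and one must check this list is exhaustive.

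The third and decisive step is the count itself. Attaching a single vertical $(-2)$--sphere in a class $A$ to the section produces a term $\pm\, A \otimes q\, t^{\Phi_{\max} - \omega(A)}$, with positive sign when the sphere is a cover of $F_{\max}$ (the class $A_n$) and negative sign for a $(-2)$--class distinct from $A_n$, the sign being that of the corresponding genus-zero Gromov--Witten invariant, equivalently of the obstruction bundle of the multiply covered sphere. Iterating, a $k$--fold chain or $k$--fold cover contributes the $k$-th power of such a term, and summing the geometric series in $k$ produces the rational factors $\tfrac{1}{1 - t^{-\omega(A)}}$; permitting independent chains on the $D_{n-1}$ and the $D_1$ sides, or the longer chain $A_1, A_2$, gives the products and cross terms in $(2b)$ and $(3b)$. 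Assembling these contributions case by case with the leading term $A_n \otimes q\, t^{\Phi_{\max}}$ yields the six formulas, Case $(1)$ being the Fano computation of \cite{McDTolm}.

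The step I expect to be the main obstacle is the Gromov--Witten bookkeeping in the last paragraph: one must show that the only contributing configurations are linear chains attached to the section over $F_{\max}$ (no branched bubble trees, no section meeting a vertical fibre more than once), compute the exact combinatorial weight of a $k$--fold iterated configuration, and fix every sign, since a single sign error would destroy the geometric-series resummation and hence the closed formulas. Proving regularity of the relevant section moduli for a suitable $J$, so that these counts are genuine Gromov--Witten invariants, and verifying completeness of the enumeration underlying Figure \ref{fig:cases}, are the remaining technical points, but comparatively routine.
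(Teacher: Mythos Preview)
The paper does not prove this theorem: it is stated as Theorem 4.5 of \cite{AL} and is merely recalled here as background, with no argument given. So there is no proof in the present paper to compare your attempt against; what you have written is a sketch of how the result is obtained in the cited source \cite{AL}.

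As a sketch of that external proof, your outline is broadly correct in spirit --- the result does come from McDuff--Tolman's description of the leading term together with a careful enumeration of bubble-tree contributions from $(-2)$--spheres --- but your third step is too glib at precisely the point you flag as the obstacle. The assertion that ``a $k$--fold chain or $k$--fold cover contributes the $k$-th power of such a term'' is not how the combinatorics actually works: in \cite{AL} the contributions are computed from explicit obstruction-bundle calculations for multiple covers of $(-2)$--spheres, and the closed formulas in cases $(2b)$ and $(3b)$ arise from recursions among these contributions rather than from naive products of independent geometric series. In particular, the outer factor $\frac{1}{1-t^{-\omega(A_n)-\omega(A_1)}}$ in $(2b)$ encodes contributions from the class $A_n + A_1$ (the sum of two adjacent $(-2)$--classes, which itself has $c_1 = 0$) and its multiples, not a product of independent chains; your sketch does not account for these ``combined'' classes. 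Similarly, the sign rule you state (positive for covers of $A_n$, negative otherwise) is the right heuristic but requires the actual Euler-class computation in \cite{AL} to justify, and becomes more delicate once one allows chains mixing classes. If you intend to supply a self-contained proof rather than a citation, you would need to reproduce that obstruction-bundle analysis in full.
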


\subsection{The fundamental group of \texorpdfstring{$\Symp ( \Mucccc)$}{Symp (Mucccc)}} \label{reduceddefinitions}

In this section we recall the main results obtained by Li-Li-Wu  \cite{LiLiWu2} on the Torelli symplectic mapping class group and on the rank of the fundamental group of the group Symp$_{h}(\Mucccc)  $ of symplectomorphisms that act trivially on homology, for any given symplectic form.  
 
First, note that diffeomorphic symplectic forms define symplectomorphism groups that are homeomorphic, and that symplectomorphism groups are invariant under rescalings of symplectic forms. Consequently, we can restrict ourselves to symplectic forms belonging to a fundamental domain for the action of $ \Diff \times \mathbb{R_*} $ on the space $ \Omega _{+} $ of orientation-compatible symplectic forms defined on the n--fold blow-up $ \mathbb{X}_{n} $.
The cohomology class of a reduced class  $\omega$ (see definition \ref{reducedform}  in Introduction) is $\nu L  -  \delta_1 V_1 -  \hdots  -  \delta_n V_n$. Let $\mathcal J_\omega$ be the space of compatible almost complex structures on $\X_n$. For any $J \in \mathcal J_\omega$ on $\X_n$, the first Chern class $c_1:= c_1 (TM) \in H^2(\X_n; \Z)$ is the Poincaré dual to $K:= 3L - \sum_i V_i$. Let $\mathcal K$ be the symplectic cone of $\X_n$, that is, 
$$ \mathcal{K} = \{ A \in H^2(\X_n; \Z) \, | \,  A = [\omega]  \ \mbox{for some symplectic form} \ \omega  \in  \Omega _{+} \}$$
Now if $C$ stands for the Poincar\'{e} dual of the symplectic cone of $ \mathbb{X}_{n} $, then by  {\it uniqueness of symplectic blow-ups} proved by McDuff in \cite{McD}, the diffeomorphism class of the form $ \omega$ only depends on its cohomology class. 
Therefore, it is enough to describe a fundamental domain of the action of $ \Diff \times \mathbb{R_*} $ on $C$.
Moreover, the canonical class $K$ is unique up to orientation preserving diffeomorphisms \cite{LiLiu2}, so it suffices to describe the action of the diffeomorphisms fixing $K$, $\Diff_K$, on
$$ C_{K} = \{ A \in  H_{2} ( \mathbb{X}_{n} ; \mathbb{R} ) \ : \ A = PD [ \omega ]  \ \mbox{for some} \ \omega \in \Omega_{K} \} $$
 where $ \Omega _{K}$ is the set of orientation-compatible symplectic forms with $K$ as the symplectic canonical class.
By  results in \cite{LiLiu2}, the set of reduced classes is a fundamental domain of $C_{K} ( \mathbb{X}_{n}  )$ under the action of $ \Diff_{K}$. A proof of this result is also given in \cite[Theorem 1.4]{KaKe}. We now consider the following change of basis in $H_2(\X_n; \Z)$. Consider the symplectic manifold $(S^2 \times S^2, \mu \sigma \oplus \sigma )$ where the homology class of the base $ B \in H _{2} ( S ^{2} \times S ^{2} )$  represented by $ S ^{2} \times \{ pt \} $ has area $ \mu, $ and the homology class of the fiber $ F \in H _{2} ( S ^{2} \times S ^{2} )$ represented by  $ \{ pt \} \times S ^{2} $ has area $ 1 $. Recall that  $\Muc=(S^2 \times S^2 \#\, (n-1)\overline{ \bbcp}\,\!^2, \omuc)$ is obtained from $(S^2 \times S^2, \mu \sigma \oplus \sigma )$,  by performing $n-1$ successive blow-ups of capacities  $ c_{1}, \hdots, c_{n-1}$. This can be naturally identified with $(\X_n, \omega)$. One easy way to understand the equivalence is as follows: let $ \{ B, F, E_{1}, \hdots, E_{n-1} \} $ be the basis for $H_{2}(\Muc ; \mathbb{Z}) $ where the $ E_{i}$ represent the exceptional spheres arising from the blow-ups. We identify $L$ with $ B+F-E_{1} $, $ V_{1}$ with $ B - E_{1}$, $ V_{2} $ with $ F - E_{1} $, and $ V_{i} $ with $ E_{i-1} $, with $3 \leq i \leq n$. Then the  {\it uniqueness of symplectic blow-ups} due to D. McDuff (see \cite[Corollary 1.3]{McD}) implies that  the symplectomorphism type of a symplectic blow-up of a rational ruled manifold along an embedded ball of capacity $ c \in (0,1) $ depends only on the capacity $ c $ and not on the particular embedding used in obtaining the blow-up. Using this result and after rescaling, we conclude that for parameters satisfying the relations 

\begin{equation}\label{sympequiv}
 \mu = \dfrac{\nu - \delta_{2}}{\nu - \delta_{1}} , \quad  c_{1}= \dfrac{\nu - \delta_{1} - \delta_{2}}{\nu - \delta_{1}}, \quad \mbox{and} \quad  c_{i} = \dfrac{\delta_{i+1}}{\nu - \delta_{1}}, \quad  2 \leq i \leq n-1.
 \end{equation}
there exists a symplectomorphism between two symplectic manifolds encoded by these parameters such that 
$$ \nu L  -  \delta_1 V_1 -  \hdots  -  \delta_n V_n = \mu B + F -c_1 E_1 - \hdots -c_{n-1} E_{n-1}. $$
Summarizing the above, we showed that 
\begin{lemma}
Every symplectic form on $S^2 \times S^2 \#\, (n-1)\overline{ \bbcp}\,\!^2$ is, after rescalling, diffeomorphic to a form Poincaré dual to  $\mu B + F -c_1 E_1 - \hdots -c_{n-1} E_{n-1}$ with 
$$ 0 < c_{n-1} \leq \hdots \leq c_1 \leq 1 \leq \mu \quad \mbox{and} \quad c_i+c_j \leq 1. $$ 
\end{lemma}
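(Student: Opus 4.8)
The plan is to derive Lemma~\ref{reduced m1} by translating the known description of a fundamental domain for reduced symplectic forms (from \cite{LiLiu3}) through the explicit change of basis recorded in \eqref{sympequiv}. The heavy lifting is already done: the result of \cite{LiLiu3} guarantees that every symplectic form on $\X_n$ is, after rescaling, diffeomorphic to one whose Poincar\'{e} dual is a reduced class $\nu L - \delta_1 V_1 - \cdots - \delta_n V_n$ satisfying \eqref{reduced}. The task is therefore purely algebraic: substitute the basis identification $L = B + F - E_1$, $V_1 = B - E_1$, $V_2 = F - E_1$, $V_i = E_{i-1}$ for $3 \le i \le n$, rewrite the reduced class in the $\{B, F, E_1, \dots, E_{n-1}\}$ basis, and check that the inequalities \eqref{reduced} transform into $0 < c_{n-1} \le \cdots \le c_1 \le 1 \le \mu$ together with $c_i + c_j \le 1$.

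First I would compute the cohomology class in the new basis. Using the substitutions, $\nu L - \sum_i \delta_i V_i$ becomes a combination of $B$, $F$, and the $E_i$; collecting coefficients and then rescaling by the factor $1/(\nu - \delta_1)$ (which is legitimate since symplectomorphism groups are invariant under rescaling) produces exactly a class Poincar\'{e} dual to $\mu B + F - c_1 E_1 - \cdots - c_{n-1} E_{n-1}$ with $\mu$, $c_1$, and the $c_i$ given by \eqref{sympequiv}. (Note the coefficient of $B$ is $\mu = (\nu - \delta_2)/(\nu - \delta_1)$ and of $F$ is $1$ after rescaling; the statement writes $B + F$ because one further normalizes, or one simply keeps $\mu$ as the coefficient of $B$ — I would match the paper's convention here, keeping $\mu B + F$ and recalling the equivalence sends this to the normal form after absorbing $\mu$ into the ruling.) Then I would verify each inequality in turn:
\begin{itemize}
\item $\nu > \delta_1 \ge \delta_2$ gives $\mu = \frac{\nu - \delta_2}{\nu - \delta_1} \ge 1$ and $\nu - \delta_1 > 0$, so the rescaling factor is positive;
\item $\delta_2 \ge \delta_3 \ge \cdots \ge \delta_n > 0$ translates directly into $c_2 \ge c_3 \ge \cdots \ge c_{n-1} > 0$;
\item the constraint $c_1 \ge c_2$ amounts to $\nu - \delta_1 - \delta_2 \ge \delta_3$, i.e. $\nu \ge \delta_1 + \delta_2 + \delta_3$, which is the reducedness inequality $\nu \ge \delta_i + \delta_j + \delta_k$ for the top three indices;
\item $c_1 \le 1$ is $\nu - \delta_1 - \delta_2 \le \nu - \delta_1$, i.e. $\delta_2 > 0$;
\item $c_1 > 0$ is $\nu > \delta_1 + \delta_2$, again a consequence of $\nu \ge \delta_1 + \delta_2 + \delta_3$ and $\delta_3 > 0$;
\item the pairwise bounds $c_i + c_j \le 1$ reduce, via the same substitution, to the remaining reducedness inequalities $\nu \ge \delta_1 + \delta_i + \delta_j$ (the worst case pairs involving the index $1$), since the sums not involving the ``$B - E_1$'' direction are automatically controlled once $\nu \ge \delta_1 + \delta_2 + \delta_3$ and the $\delta_i$ are decreasing.
\end{itemize}

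I expect the main obstacle to be purely bookkeeping: getting the dictionary between the two triangle-inequality systems exactly right, since the reduced condition \eqref{reduced} is stated with a single inequality $\nu \ge \delta_i + \delta_j + \delta_k$ (implicitly for the three largest indices, the binding case), whereas in the $c$-coordinates the conditions split into $c_1 \le 1$, the monotonicity $c_1 \ge c_2 \ge \cdots$, and the family $c_i + c_j \le 1$; one must check that no reducedness inequality is lost and none is spuriously added. A careful case analysis over which of the basis vectors $B - E_1$, $F - E_1$, $E_{i-1}$ appear in a given triple $V_i, V_j, V_k$ handles this. Finally I would note that the equivalence \eqref{sympequiv} is invertible in the stated range, so the correspondence is a genuine bijection between reduced forms on $\X_n$ and forms on $\Muc$ in the normal form of the lemma, completing the proof.
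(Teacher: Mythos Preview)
Your proposal is correct and follows exactly the same approach as the paper: the paper's proof is the single sentence preceding the lemma, asserting that the result follows by combining the fundamental-domain result of \cite{LiLiu3} with the change of basis \eqref{sympequiv}. You supply the detailed inequality verification that the paper omits; your parenthetical remark about the coefficient $\mu$ on $B$ (or $F$) correctly flags a minor imprecision in the statement as written.
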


Recall (see \cite{LiLi}) that the normalized reduced symplectic cone is defined as the space of reduced symplectic classes having area 1 on $L$, the line class. Note that cohomologous symplectic forms on a rational or ruled surface are diffeomorphic (cf. \cite{LalMcD2,LiLiu2}). We represent such a class by $(1 | \delta_1, \hdots, \delta_n)$, or $(\delta_1, \hdots, \delta_n) \in \R^n.$ For $3 \leq n \leq 8$ such a cone is a $n$-simplex with one facet removed, where the monotone class is one of the vertices, namely $M_n = (\frac13, \hdots, \frac13)$.
We are interested in the case when the manifold is $\X_5$ where the normalized reduced cone is convexly generated by 5 half-closed intervals $\{MO,MA,MB,MC,MD\}$, with vertices $M= (\frac13,\frac13,\frac13,\frac13,\frac13)$ which corresponds to the monotone case, $O=(0,0,0,0,0)$, $A=(1,0,0,0,0)$, $B=(\frac12,\frac12, 0,0,0)$, $C=(\frac13,\frac13,\frac13,0,0)$ and 
$D=(\frac13,\frac13,\frac13,\frac13,0)$ (for more details see  \cite{LiLi}).

Let $N_\omega$ be the number of symplectic -2 spheres classes. Then Li-Li-Wu proved the following
\begin{theorem}[\cite{LiLiWu2}, Theorem 1.2]
Consider $\X_5$ with any symplectic form $\omega$. Then the rank of the fundamental group of $\Symp_h (\X_5, \omega)$ satisfies 
$$rank( \pi_1 (\Symp_h (\X_5, \omega))) = N_\omega -5  + rank ( \pi_0 (\Symp_h(\X_5, \omega))),$$
where the rank of $\pi_0 (\Symp_h(\X_5, \omega))$ means the rank of its abelianization. 
\end{theorem}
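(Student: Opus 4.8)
The plan is to pass to classifying spaces. Since the space $\mathcal{J}_\omega$ of $\omega$-compatible almost complex structures on $\X_5$ is contractible and $G:=\Symp_h(\X_5,\omega)$ acts on it, the Borel construction gives a homotopy equivalence between the homotopy quotient $\mathcal{J}_\omega /\!\!/ G$ and $BG$, whence $H^*_G(\mathcal{J}_\omega;\Q)\cong H^*(BG;\Q)$ and, for the identity component $G_0$, $\mathrm{rank}\,\pi_1(G_0)=\dim H^2(BG_0;\Q)$ by the rational Hurewicz theorem (as $BG_0$ is simply connected). The whole computation is thereby reduced to the low-degree equivariant cohomology of $\mathcal{J}_\omega$.

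First I would stratify $\mathcal{J}_\omega$ by the configuration of embedded $J$-holomorphic spheres in the $(-2)$-classes, i.e. the classes $D\in H_2(\X_5;\Z)$ with $D^2=-2$ and $K\cdot D=0$; positivity of intersections and Gromov compactness make this a locally finite, $G$-invariant stratification by submanifolds of even real codimension. The open dense stratum $\mathcal{J}_0$ consists of those $J$ for which no $(-2)$-class is represented, and I would show that it is connected with generic stabilizer a compact subgroup $K\subset G$ (the Kähler isometries of a generic, e.g. toric, structure), so that $\mathcal{J}_0/\!\!/G\simeq BK$ up to the residual $\pi_0$-action. Each of the $N_\omega$ represented $(-2)$-classes then determines a distinguished codimension-$2$ stratum, and I would compute its equivariant normal data — a complex line bundle with a meridian circle — so that, via the Thom isomorphism, it contributes a single class in degree $2$ to $H^*_G(\mathcal{J}_\omega;\Q)$.

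Next I would run the equivariant stratification (Atiyah--Bott / Morse-theoretic) spectral sequence, arguing as in Abreu--McDuff and Pinsonnault that it collapses, so that the Poincar\'e series of $BG_0$ is assembled from the open stratum and the $t^2$-shifted contributions of the $(-2)$-strata. Reading off $\dim H^2(BG_0;\Q)$ yields the $N_\omega$ meridian generators, subject to relations dictated by the way the represented $(-2)$-classes sit inside the rank-$5$ sublattice $K^\perp\subset H_2(\X_5;\Z)$ and by cancellations against the contribution of $BK$; the net effect is a rank-$5$ reduction, giving $\mathrm{rank}\,\pi_1(G_0)=N_\omega-5$.

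Finally I would incorporate the disconnectedness of $G$. Using the extension $1\to G_0\to G\to\pi_0(G)\to1$ together with the determination of the Torelli symplectic mapping class group in \cite{LiLiWu2}, the abelianization of $\pi_0(G)$ contributes its rank to $H_1(G;\Q)$ through the associated fibration $G_0\to G\to\pi_0(G)$, yielding the extra summand $\mathrm{rank}\,\pi_0(\Symp_h(\X_5,\omega))$ and hence the stated formula. I expect the main obstacle to be the combination of two points: proving that the stratification spectral sequence is equivariantly perfect (no higher differentials), and pinning down the relations among the meridian classes precisely enough to produce exactly the rank-$5$ reduction uniformly over every chamber of the reduced cone — this is where the $D_5$ root-system combinatorics of the $(-2)$-classes and the exact identification of the generic stabilizer $K$ must be controlled. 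Feeding the $\pi_0$-contribution back in via the Torelli computation is a secondary but delicate step.
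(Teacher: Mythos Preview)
This theorem is not proved in the paper under review. It is quoted verbatim as background from \cite{LiLiWu2} (Li--Li--Wu), with attribution in the theorem header, and the present paper offers no argument for it; the authors merely use the specialisation along the edge $MA$ (where $N_\omega=8$ and $\pi_0\cong P_4(S^2)/\Z_2$) to conclude that the rank equals $5$. So there is no ``paper's own proof'' to compare your proposal against.

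On your proposal itself: the broad strategy --- stratify $\mathcal{J}_\omega$ by $(-2)$--curve configurations and run an equivariant spectral sequence in the style of Abreu--McDuff/Pinsonnault --- is indeed the circle of ideas behind \cite{LiLiWu2}, but one step is miswired. Since $\pi_1(G)=\pi_1(G_0)$ for any topological group $G$ with identity component $G_0$, the extension $1\to G_0\to G\to\pi_0(G)\to 1$ cannot add anything to $\pi_1$; the extra term $\mathrm{rank}\,\pi_0(\Symp_h)$ does not enter by tacking on $H_1(\pi_0(G);\Q)$ after the fact. In \cite{LiLiWu2} the $\pi_0$--contribution is already baked into the stratification analysis: along strata where TSMC is nontrivial the homotopy fibre sequence relating $\Symp_h$ to the stabiliser of a configuration of exceptional spheres has nontrivial connecting maps, and it is their effect on $\pi_1$ that produces the extra rank. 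If you want to reconstruct the proof, that is the place to look, not a separate $\pi_0$--summand argument.
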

Moreover, along the edge $MA$, when $\nu =1, \delta_1 > \delta_2 =\delta_3=\delta_4=\delta_5$ and $\delta_1+ \delta_2 +\delta_3 =1$, or equivalently, when $\mu > 1$ and  $c_1=c_2=c_3=c_4= 1/2$, it follows from \cite[Lemma 5.10]{LiLiWu2} and its proof (in particular from sequence (37)) that  $ \pi_1 (\Symp_h (\X_5, \omega))=\Z_5$.  This is the case we will study in detail in the forthcoming sections. In particular we will show that a generating set of the fundamental group of $\Symp_h(\X_5, \omega)$ can be realized by Hamiltonian circle actions except in some particular interval of values of $\mu$. 

\section{Quantum homology of \texorpdfstring{$\Mucccc$}{Mucccc}}\label{QuantumHomology}

In \cite{CM}, Crauder and Miranda compute the quantum cohomology of a general rational surface, which includes the case of the blown-up manifold $\mathbb{CP}^2 \# 5 \overline{\mathbb{CP}^2}$. Using Poincaré duality, this allows us to construct a presentation for the quantum homology ring $QH_\ast(\Mucccc)$, so that we can then compare different Seidel elements. The relations
$$
\begin{cases}
L = B+F-E_1 \\
V_1 = B-E_1 \\
V_2 = F-E-1 \\
V_i = E_{i-1},~\mathrm{for}~ 2 \leq  i \leq n, 
\end{cases}.
$$
give an explicit way of translating information in terms of the classes $\{L, V_1, \ldots, V_n\}$ to one in terms of $\{B, F, E_1, \ldots, E_{n-1} \}$. 

An explicit formula for the quantum product in terms of the classes $L, V_i$ is given in Proposition 5.3. of \cite{CM}. The coefficients that appear in these can be computed with the help of the Tables in Section 4 of \cite{CM}, giving us a closed formula for the products we're interested in. As an example, the product of two classes, different from the class of a single point $p \in H_0(\mathbb{X}_5, \mathbb{Z})$, in $\mathbb{CP}^2 \# 5 \overline{\mathbb{CP}^2}$ is given by
\begin{align*}
(dL-&\sum\limits_{i} m_i V_i) \ast (d'L-\sum\limits_{i} m'_i V_i) = \left( dd' - \sum\limits_{i} m_i m'_i \right) pt^{[p]}  + \sum\limits_{k} m_k m'_k V_k t^{[V_k]}+ \\
 &+ \sum\limits_{j,k}(d-m_j-m_k)(d'-m'_j-m'_k)(L-V_j-V_k)t^{[L-V_j-V_k]} + \\
& +\left(2d-\sum_{i} m_i \right) \left(2d'-\sum_{i} m'_i \right)(2L-V_1-V_2-V_3-V_4-V_5)t^{[2L-V_1-V_2-V_3-V_4-V_5]} +  \\
&+ \sum\limits_{j} (d-m_j)(d'-m'_j)X t^{[L-V_j]} + \\
&+ \sum_{j,k,l,n} (2d-m_j-m_k-m_l-m_n) (2d'-m'_j-m'_k-m'_l-m'_n) X t^{[2L-V_j-V_k-V_l-V_n]}
\end{align*}
where $i,j,k,l,n$ always represent distinct indices, $X \in H_4(\mathbb{X}_5,\mathbb{Z})$ is the class of the manifold, $d \in \mathbb{Z}_{>0}, ~m_i, m'_j \in \mathbb{Z}_{\geq 0}$ and $t^{[A]}$ means $t$ to the power of the negative symplectic area of the corresponding class $A$ (in the symplectic viewpoint). 

The next proposition gives a description of the ring $QH_\ast(\Muhalf)$. For the sake of simpler notation, let 
\begin{align*}
& b_{ij}= (B-E_i -E_j) \otimes q \,  \frac{t^{\frac12}}{1-t^{1- \mu}}, \quad   f_{ij}= (F-E_i -E_j) \otimes q \, \frac{t^{\frac12}}{1-t^{1- \mu}} \quad \mbox{and} \\ 
&  e_{i}= E_i \otimes q \, \frac{t^{\frac12}}{1-t^{1- \mu}}, 
\end{align*}
and, as before, let distinct letters in the indices correspond to distinct elements. Its proof is just computing the quantum products by the formula above and then translating them to a formula in terms of $\{B, F, E_1, \ldots, E_4\}$. It follows from  \cite[Proposition 5.3]{CM} that we have the presentation for $QH_\ast (\Muhalf)$ given below. 

\smallskip 
\begin{proposition}\label{QHring}
With the notation above,  when $\mu> 1$, as  a $\Pi^{\rm univ}$-algebra, we have 
$$QH_\ast(\Muhalf) \simeq \Pi^{\rm univ}[f_{ij},~b_{ij},~e_i] / I_{\mu,c_i=1/2},$$
where $\Pi^{\rm univ}$ is the universal Novikov ring and $I_{\mu,c_i=1/2}$ is the ideal generated by 
\begin{align*}
  & (1) \ b_{ij}b_{k\ell}=1;   & &  (7) \ f_{ij} f_{k\ell}=0;  \\
 &  (2) \  b_{ij}b_{ik}=b_{ij} f_{ij} +f_{j\ell} +1;   & &  (8) \ f_{ij} f_{ik}=  f_{ij} ( b_{ij} +1);  \\
 &  (3)\  b_{ij}^2 = 2 b_{ij} f_{ij} +  f_{ij} +  f_{k\ell} + 1;  & & (9) \ f_{ij}^2 = 2 f_{ij} ( b_{ij} +1);\\
  &  (4) \ f_{ik} ( b_{ij} +1)=0;  & &  (10) \ (f_{ij} +  f_{k\ell} ) ( b_{ij} +1)=0; \\
 & (5) \ b_{ij}\left(f_{ij} +e_i + \frac{t^{1-\mu}}{1-t^{1- \mu}}\right) = e_j+ \frac{t^{1-\mu}}{1-t^{1- \mu}}; 
&  &  (11) \ f_{ij}\left(b_{ij} +e_i + \frac{1}{1-t^{1- \mu}}\right) =0; \\
 &  (6) \  b_{ij}\left(e_k + \frac{t^{1-\mu}}{1-t^{1- \mu}}\right) =  f_{k\ell} + e_\ell+  \frac{t^{1-\mu}}{1-t^{1- \mu}} ; & & (12) \ f_{ij} \left(e_k + \frac{t^{1-\mu}}{1-t^{1- \mu}}\right)=0;
\end{align*}
\begin{align*}
 & (13) \ e_ie_j = (2b_{ij}+2f_{k\ell}+e_k+e_\ell)\frac{t^{1-\mu}}{1-t^{1- \mu}} + \frac{2t^{1-\mu}+t^{2-2\mu}}{(1-t^{1- \mu})^2}; \\
 &  (14) \ e_i^2 = b_{ij}f_{ij} +\frac{f_{ij}}{1-t^{1-\mu}} + (2b_{ij}+f_{k\ell}+2e_j)\frac{t^{1-\mu}}{1-t^{1- \mu}} + \frac{2t^{1-\mu}+t^{2-2\mu}}{(1-t^{1- \mu})^2}. 
 \end{align*}
 \end{proposition}

\begin{remark}
Of course our description doesn't give a minimal set of generators nor is that the intention of Proposition \ref{QHring}. The generators were picked with the intent of simplifying the computations of the Seidel morphism and they also give some simple insight into the ring structure: for instance, relation $f_{ij} f_{k\ell}=0 $ implies that there are zero divisors.
\end{remark}

\section{Generators of \texorpdfstring{$\pi_1(\Symp(\Mucccc))$}{pi1(Symp(Mucccc))}}

\subsection{Hamiltonian circle actions in \texorpdfstring{$\Mucccc$}{Mucccc}}
In this section we list all equivalence classes of Hamiltonian circle actions on symplectic manifolds whose symplectic cohomology class belongs to the edge $MA$ of the reduced symplectic cone. Recall that along this edge we have  $\mu > 1$ and $c_1=c_2=c_3=c_4= 1/2$. 
Recall also that  Karshon's classification \cite[Section 6.2]{Kar} implies that every compact four dimensional Hamiltonian $S^1$-space can be obtained from a \emph{minimal space}, which can be $\CP^2$, a Hirzebruch  surface, or an irrational ruled manifold (see \cite[Section 6.3]{Kar}), by a sequence of equivariant symplectic blow-ups at fixed points.  It follows that the only possible Hamiltonian circle actions on the symplectic manifolds belonging to the edge $MA$ are  the ones corresponding to the labelled graphs of Figure \ref{graphMAgeneric}, where the values of $a$ and $b$ represent the symplectic area of the invariant spheres and depend on which sphere we perform the blow-up. In our figures we omit the genus label since, in our case, the invariant surfaces are always embedded spheres. Moreover, since the symplectic area of the spheres is positive, i.e. $a,b > 0$, and $c_1=c_2=c_3=c_4= 1/2$  then  we can only have $a+b= 2\mu -2$. 
\begin{figure}[thp]
\begin{tikzpicture}[scale=0.8,font=\footnotesize]
       \fill[black] (2,1.5) ellipse (1 and 0.2);
	\node at (4.2,1.5) {$1,a$};
	\filldraw [black] (0.5,0) circle (2pt);
	\filldraw [black] (1.5,0) circle (2pt);
	\node at (4.5 ,0) {$ \displaystyle{\frac12}$};
	\filldraw [black] (2.5,0) circle (2pt);
	\filldraw [black] (3.5,0) circle (2pt);
        \fill[black] (2,-1.5) ellipse (1 and 0.2);
	\node at (4.2,-1.5) {$0,b$};
\end{tikzpicture}
\caption{Graphs representing Hamiltonian circle actions on symplectic manifolds belonging to the ray $MA$}
\label{graphMAgeneric}
\end{figure}

\subsection{Circle actions and homotopy classes of loops}
A labelled graph only determines a circle action up to symplectomorphisms. Equivalently, a labelled graph defines a conjugacy class of circles in $\Symp(M,\om)$. Consequently, any such graph defines an element of
\[\pi_{1} \left(\Symp(M,\om)\right)\slash \Symp(M,\om)\simeq \pi_1\left(\Symp(M,\om)\right)\slash \pi_0(\Symp(M,\om))\]
where the action is by conjugation. The analysis of this action is done in two stages.

For any symplectic manifold $(M,\om)$ belonging to the ray $MA$, the action of $\Symp(M,\om)$ on homology induces a short exact sequence 
\[1\to \Symp_{h}(M,\om)\to\Symp(M,\om)\stackrel{f}{\longrightarrow}\Aut_{c_1,[\om]}\left(H_2(M,\Z)\right)\to 1\]
where $\Aut_{c_1,[\om]}\left(H_2(M,\Z)\right)$ is the group of automorphisms of the lattice $H_2(M,\Z)$ preserving the intersection form and the classes dual to $c_1(M,\om)$ and $[\om]$. The fact that the map $f$ is onto follows from three results in~\cite{LiWu} that we briefly recall. Firstly, by~\cite[Proposition 4.14]{LiWu}, the group $\Aut_{c_1,[\om]}\left(H_2(M,\Z)\right)$ is generated by reflections about spherical homology classes $A$ satisfying 3 conditions: $A\cdot A=-2$, $c_1(A)=0$, and $\om(A)=0$. Such a class is called a $(K, [\om])$-null spherical class, where $K$ denotes the symplectic canonical class. Secondly, a symplectic Dehn twist along a Lagrangian sphere $L$ induces the reflection $R([L])$ in homology. Finally, the result follows from Proposition~5.6 in~\cite{LiWu} which proves existence of Lagrangian spheres representing $(K, [\om])$-null spherical classes.

In our case, along $MA$, the automorphism group is isomorphic to the Weyl group $D_4$ given by the trivalent Dynkin diagram (see~\cite{LiLiWu2}). It is easy to see that it fixes the classes $2B+2F-E_1-E_2-E_3-E_4$ and $F$, while it acts transitively on the 8 exceptional classes ${E_1,\ldots,E_4, F-E_1,\ldots,F-E_4}$. In particular, the only element of $\Aut_{c_1,[\om]}$ that fixes the four exceptional classes $E_i$ is the identity.

In order to keep track of the action of $\Aut_{c_1,[\om]}\simeq \Symp(M,\om)\slash \Symp_h(M,\om)$ on Hamiltonian circle actions, we consider \emph{extended  graphs} as defined in~\cite[Section 5, p.33]{Kar} decorated with \emph{homology labels}. Starting with the standard labelled graph of Figure~\ref{graphMAgeneric}, we add extra (dotted) edges that represent free invariant spheres connecting each interior fixed point to extrema of the moment map. Each such sphere is the closure of a free $\C^*$-orbit, where the $\C^*$ action is defined from the choice of a generic $S^1$-invariant almost-complex structure. Since the action of $\Symp(M,\om)$ preserves the genericity of almost-complex structures, an extended graph defines a configuration of invariant spheres that is well defined up to conjugation. We then label the edges of the extended graph with homology classes according to the sequence of blow-ups that is used to construct the Hamiltonian $S^1$-manifold. Geometrically, this amounts to labelling invariant spheres with their homology class following a specific sequence of equivariant blow-ups performed on $\left(S^2\times S^2,\mu\sigma\otimes\sigma\right)$, starting with the two fixed surfaces labelled $B=[S^2\times\pt]$ and $F=[\pt\times S^2]$. The possible extended labelled graphs are shown in Figures~\ref{firstcase}, \ref{secondcase}, and~\ref{graphMA}. By construction, the group $\Symp(M,\om)$ acts on its corresponding extended labelled graph with kernel $\Symp_h(M,\om)$. In our case, these extended labelled graphs classify $\Symp_h$-equivalence classes of $S^1$-manifolds on the edge $MA$ endowed with a given framing $\phi:H_2(M,\Z)\to\Z\langle B,F,E_1,E_2,E_3,E_4\rangle\simeq \Z^{1,5}$.

Recall from~\cite{LiLiWu2} that for $(M,\om)$ belonging to the edge $MA$, the symplectomorphism group $\Symp_h(M,\om)$ is not connected. Indeed, 
$\pi_0 (\Symp_h(M,\om))= \pi_0(\Diff^+(S^2,4))\simeq P_4(S^2) / \Z_2$, where $P_4(S^2)$ is the pure braid group of 4 strings in $S^2.$ Since an extended labelled graph only determines an element in $\pi_1(\Symp(M,\om))/\pi_0(\Symp_h(M,\om))$ we have to understand how $\pi_0(\Symp_h(M,\om))$ acts on $\pi_1(\Symp(M,\om))$. We postpone  this analysis to Section~\ref{Section:Seidelelements}. 

\subsection{Extended labelled graphs along the edge \texorpdfstring{$MA$}{MA}}\label{Extendedgraphs}
We now describe a finite set of one parameter families of extended labelled graphs, parametrized by $\mu$, that correspond to symplectic manifolds belonging to the edge $MA$ of the reduced symplectic cone. The number of elements in these families depend on the range of $\mu$. As explained above, each such graph corresponds to a $\Symp_h(M,\om)$-conjugacy class of Hamiltonian circle actions.

Notice that these actions only exist as long as the symplectic area of the classes corresponding to the fixed spheres is positive. 
Assuming  $1 <\mu \leq  \frac32$  we have  4  circle actions represented by the graphs in Figure \ref{firstcase}. We can consider for example: $z_{0,12}, z_{0,13},z_{0,14}$ and $z_1$. We do not consider flips of these graphs as they represent actions which are inverse to these ones. 
\begin{remark}\label{actionssymplectomorphic}
Note that removing the homology labels and the dotted edges from the graphs representing the four actions $z_{0,12}, z_{0,13},z_{0,14}$ and $z_1$, we get exactly the same underlying labelled graph. It follows that these four Hamiltonian circle actions are conjugated by symplectomorphisms that act non-trivially on homology. 
\end{remark}

\begin{figure}[thp]
\begin{minipage}{.4\textwidth}
\begin{tikzpicture}[scale=1,font=\footnotesize]
\node at (2,2.1 ) {$B  - E_l -E_m $};
       \fill[black] (2,1.5) ellipse (1 and 0.2);
	\node at (4.2,1.5) {$1,\mu-1$};
	\filldraw [black] (0.5,0) circle (2pt);
	\draw[dashed, black!60] (0.5,0) --(1.5,1.5); 
	\node at (0.7,0.8) {$ E_\ell$};
	\draw[dashed, black!60] (0.5,0) --(1.5,-1.5); 
	\node at (0.3,-0.7) {$ F- E_\ell$};
	\filldraw [black] (1.2,0) circle (2pt);
	\draw[dashed, black!60] (1.2,0) --(1.6,1.5); 
	\node at (1.7,0.8) {$ E_m$};
	\draw[dashed, black!60] (1.2,0) --(1.6,-1.5); 
	\node at (1.8,-0.2) {$ F-E_m$};
	\node at (4.5 ,0) {$ \displaystyle{\frac12}$};
	\filldraw [black] (2.5,0) circle (2pt);
	\draw[dashed, black!60] (2.5,0) --(2.3,1.5); 
	\node at (2,0.3) {$ F-E_i$};
	\draw[dashed, black!60] (2.5,0) --(2.3,-1.5); 
	\node at (2.6,-0.7) {$ E_i$};
	\filldraw [black] (3.5,0) circle (2pt);
	\draw[dashed, black!60] (3.5,0) --(2.5,1.5); 
	\node at (3.6,0.7) {$F- E_j$};
	\draw[dashed, black!60] (3.5,0) --(2.5,-1.5); 
	\node at (3.4,-0.7) {$ E_j$};
        \fill[black] (2,-1.5) ellipse (1 and 0.2);
	\node at (4.2,-1.5) {$0,\mu-1$};
	\node at (2,-2.1) {$B-E_i -E_j$};
       \node at (2,-2.8) {Circle action $z_{0,ij}$};
\end{tikzpicture}
\end{minipage}%
\begin{minipage}{.4\textwidth}
\begin{tikzpicture}[scale=1,font=\footnotesize]
\node at (2,2.1 ) {$B +F  -E_1 - E_2 -E_3 -E_4$};
       \fill[black] (2,1.5) ellipse (1 and 0.2);
	\node at (4.2,1.5) {$\displaystyle{1,\mu -1}$};
	\filldraw [black] (0.5,0) circle (2pt);
	\draw[dashed, black!60] (0.5,0) --(1.5,1.5); 
	\node at (0.7,0.8) {$ E_1$};
	\draw[dashed, black!60] (0.5,0) --(1.5,-1.5); 
	\node at (0.3,-0.8) {$ F-E_1$};
	\filldraw [black] (1.5,0) circle (2pt);
	\draw[dashed, black!60] (1.5,0) --(2,1.5); 
	\node at (1.5,0.8) {$ E_2$};
	\draw[dashed, black!60] (1.5,0) --(2,-1.5); 
	\node at (1.4,-0.4) {$ F-E_2$};
	\node at (4.5 ,0) {$ \displaystyle{\frac12}$};
	\filldraw [black] (2.5,0) circle (2pt);
	\draw[dashed, black!60] (2.5,0) --(2.2,1.5); 
	\node at (2.7,0.7) {$ E_3$};
	\draw[dashed, black!60] (2.5,0) --(2.2,-1.5); 
	\node at (2.7,-0.4) {$ F-E_3$};
	\filldraw [black] (3.5,0) circle (2pt);
	\draw[dashed, black!60] (3.5,0) --(2.7,1.5); 
	\node at (3.5,0.7) {$ E_4$};
	\draw[dashed, black!60] (3.5,0) --(2.7,-1.5); 
	\node at (3.7,-0.8) {$ F-E_4$};
        \fill[black] (2,-1.5) ellipse (1 and 0.2);
	\node at (4.2,-1.5) {$\displaystyle{0,\mu -1}$};
	 \node at (2,-2.1) {$B-F$};
       \node at (2,-2.8) {Circle action $z_{1}$};
\end{tikzpicture}
\end{minipage}%
\caption{Family of graphs in the case $\mu >1$}
\label{firstcase}
\end{figure}
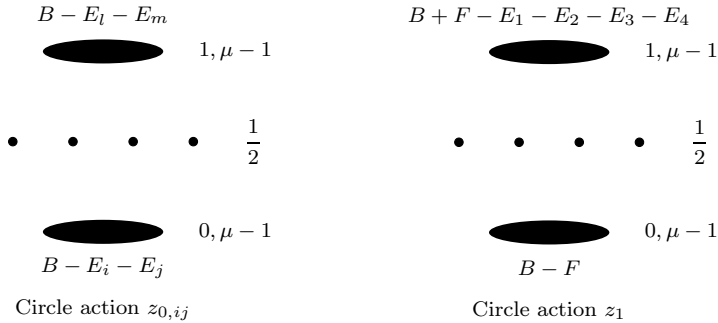
Moreover, as we increase the value of $\mu$, there are more classes that can  be represented by the fixed spheres, as we see next. 
If we consider $ \mu > \frac32  $ then we can add   the graphs in Figure \ref{secondcase} to the previous family. It should be clear that there are 8 such graphs, because $i,j, \ell,m=1,2,3,4$ are all distinct. More precisely we have the graphs representing the following actions: 
  $z_{0,123},z_{0,124},z_{0,134},z_{0,234}$ and $z_{1,1},z_{1,2},z_{1,3},z_{1,4}$.
\begin{figure}[thp]
\begin{minipage}{.4\textwidth}
\begin{tikzpicture}[scale=1,font=\footnotesize]
\node at (2,2.1 ) {$B  - E_m $};
       \fill[black] (2,1.5) ellipse (1 and 0.2);
	\node at (4.2,1.5) {$\displaystyle{1,\mu  - \frac12}$};
	\filldraw [black] (0.5,0) circle (2pt);
	\draw[dashed, black!60] (0.5,0) --(1.5,1.5); 
	\node at (0.6,0.8) {$ E_m$};
	\draw[dashed, black!60] (0.5,0) --(1.5,-1.5); 
	\node at (0.4,-0.8) {$ F-E_m$};
	\filldraw [black] (1.5,0) circle (2pt);
	\draw[dashed, black!60] (1.5,0) --(1.7,-1.5); 
	\node at (1.3,-0.7) {$ E_i$};
	\draw[dashed, black!60] (1.5,0) --(1.7,1.5); 
	\node at (1.3,0.3) {$ F-E_i$};
	\node at (4.5 ,0) {$ \displaystyle{\frac12}$};
	\filldraw [black] (2.5,0) circle (2pt);
	\draw[dashed, black!60] (2.5,0) --(2.3,-1.5); 
	\node at (2.2,-0.7) {$ E_j$};
	\draw[dashed, black!60] (2.5,0) --(2.3,1.5); 
	\node at (2.4,0.5) {$ F-E_j$};
	\filldraw [black] (3.5,0) circle (2pt);
        \fill[black] (2,-1.5) ellipse (1 and 0.2);
        \draw[dashed, black!60] (3.5,0) --(2.5,-1.5); 
	\node at (3.4,-0.7) {$ E_\ell$};
	 \draw[dashed, black!60] (3.5,0) --(2.5,1.5); 
	\node at (3.6,0.7) {$ F-E_\ell$};
	\node at (4.2,-1.5) {$\displaystyle{0,\mu-\frac32}$};
	\node at (2,-2.1) {$B-E_i-E_j-E_\ell$};
       \node at (2,-2.8) {Circle action $z_{0,ij\ell}$};
\end{tikzpicture}
\end{minipage}%
\begin{minipage}{.4\textwidth}
\begin{tikzpicture}[scale=1,font=\footnotesize]
\node at (2,2.1 ) {$B +F  -E_j - E_l -E_m $};
       \fill[black] (2,1.5) ellipse (1 and 0.2);
	\node at (4.2,1.5) {$\displaystyle{1,\mu  - \frac12}$};
	\filldraw [black] (0.5,0) circle (2pt);
	\draw[dashed, black!60] (0.5,0) --(1.5,1.5); 
	\node at (0.6,0.8) {$ E_j$};
	\draw[dashed, black!60] (0.5,0) --(1.5,-1.5); 
	\node at (0.2,-0.8) {$ F-E_j$};
	\filldraw [black] (1.5,0) circle (2pt);
	\draw[dashed, black!60] (1.5,0) --(2,1.5); 
	\node at (1.5,0.8) {$ E_\ell$};
	\draw[dashed, black!60] (1.5,0) --(2,-1.5); 
	\node at (1.6,-0.7) {$ F-E_\ell$};
	\node at (4.5 ,0) {$ \displaystyle{\frac12}$};
	\filldraw [black] (2.5,0) circle (2pt);
	\draw[dashed, black!60] (2.5,0) --(2.2,1.5); 
	\node at (2.7,0.7) {$ E_m$};
	\draw[dashed, black!60] (2.5,0) --(2.2,-1.5); 
	\node at (2.5,-0.3) {$ F-E_m$};
        \filldraw [black] (3.5,0) circle (2pt);
        \draw[dashed, black!60] (3.5,0) --(2.5,-1.5); 
	\node at (3.4,-0.7) {$ E_i$};
	\draw[dashed, black!60] (3.5,0) --(2.5,1.5); 
	\node at (3.7,0.7) {$ F-E_i$};
	  \fill[black] (2,-1.5) ellipse (1 and 0.2);
	\node at (4.2,-1.5) {$\displaystyle{0,\mu -\frac32}$};
	 \node at (2,-2.1) {$B-F- E_i$};
       \node at (2,-2.8) {Circle action $z_{1,i}$};
\end{tikzpicture}
\end{minipage}%
\caption{New family of graphs if $\mu >\frac32$}
\label{secondcase}
\end{figure}

Then there are no more possible classes for the fixed symplectic spheres  unless we consider $\mu >2$. In this case, 8 new circle actions appear, where the following pairs of classes are represented by the fixed spheres: $B$ and $B-E_1-E_2-E_3-E_4$; $B-2F$ and $B+2F-E_1-E_2-E_3-E_4$; and $B-F-E_i-E_j$  and $B+F -E_\ell-E_m$ with $i,j,\ell, m =1,2,3,4$ all distinct. If we restrict the range of values of $\mu$ further, it is easy to see that the number of circle actions  keeps increasing. More precisely,  when $\mu$  passes $k +\frac12$ or $k+1$, for $k \in \Z_{\geq 1}$ the number of actions always increases by 8. Therefore we obtain the following proposition.

\begin{figure}[thp]
\begin{minipage}{.4\textwidth}
\begin{tikzpicture}[scale=1,font=\footnotesize]
\node at (2,2.1 ) {$B +kF  -E_1 - E_2 -E_3 -E_4$};
       \fill[black] (2,1.5) ellipse (1 and 0.2);
	\node at (4.2,1.5) {$\displaystyle{1,\mu +k-2}$};
	\filldraw [black] (0.5,0) circle (2pt);
	\draw[dashed, black!60] (0.5,0) --(1.5,1.5); 
	\node at (0.7,0.8) {$ E_1$};
	\draw[dashed, black!60] (0.5,0) --(1.5,-1.5); 
	\node at (0.3,-0.8) {$ F-E_1$};
	\filldraw [black] (1.5,0) circle (2pt);
	\draw[dashed, black!60] (1.5,0) --(2,1.5); 
	\node at (1.5,0.8) {$ E_2$};
	\draw[dashed, black!60] (1.5,0) --(2,-1.5); 
	\node at (1.4,-0.4) {$ F-E_2$};
	\node at (4.5 ,0) {$ \displaystyle{\frac12}$};
	\filldraw [black] (2.5,0) circle (2pt);
	\draw[dashed, black!60] (2.5,0) --(2.2,1.5); 
	\node at (2.7,0.7) {$ E_3$};
	\draw[dashed, black!60] (2.5,0) --(2.2,-1.5); 
	\node at (2.7,-0.4) {$ F-E_3$};
	\filldraw [black] (3.5,0) circle (2pt);
	\draw[dashed, black!60] (3.5,0) --(2.7,1.5); 
	\node at (3.5,0.7) {$ E_4$};
	\draw[dashed, black!60] (3.5,0) --(2.7,-1.5); 
	\node at (3.7,-0.8) {$ F-E_4$};
        \fill[black] (2,-1.5) ellipse (1 and 0.2);
	\node at (4.2,-1.5) {$\displaystyle{0,\mu -k}$};
	 \node at (2,-2.1) {$B-kF$};
       \node at (2,-2.8) {Circle action $z_{k}$};
\end{tikzpicture}
\end{minipage}%
\begin{minipage}{.4\textwidth}
\begin{tikzpicture}[scale=1,font=\footnotesize]
\node at (2,2.1 ) {$B +kF  -E_j - E_l -E_m $};
       \fill[black] (2,1.5) ellipse (1 and 0.2);
	\node at (4.2,1.5) {$\displaystyle{1,\mu +k - \frac32}$};
	\filldraw [black] (0.5,0) circle (2pt);
	\draw[dashed, black!60] (0.5,0) --(1.5,1.5); 
	\node at (0.6,0.8) {$ E_j$};
	\draw[dashed, black!60] (0.5,0) --(1.5,-1.5); 
	\node at (0.2,-0.8) {$ F-E_j$};
	\filldraw [black] (1.5,0) circle (2pt);
	\draw[dashed, black!60] (1.5,0) --(2,1.5); 
	\node at (1.5,0.8) {$ E_\ell$};
	\draw[dashed, black!60] (1.5,0) --(2,-1.5); 
	\node at (1.6,-0.7) {$ F-E_\ell$};
	\node at (4.5 ,0) {$ \displaystyle{\frac12}$};
	\filldraw [black] (2.5,0) circle (2pt);
	\draw[dashed, black!60] (2.5,0) --(2.2,1.5); 
	\node at (2.7,0.7) {$ E_m$};
	\draw[dashed, black!60] (2.5,0) --(2.2,-1.5); 
	\node at (2.5,-0.3) {$ F-E_m$};
        \filldraw [black] (3.5,0) circle (2pt);
        \draw[dashed, black!60] (3.5,0) --(2.5,-1.5); 
	\node at (3.4,-0.7) {$ E_i$};
	\draw[dashed, black!60] (3.5,0) --(2.5,1.5); 
	\node at (3.7,0.7) {$ F-E_i$};
	  \fill[black] (2,-1.5) ellipse (1 and 0.2);
	\node at (4.2,-1.5) {$\displaystyle{0,\mu -k -\frac12}$};
	 \node at (2,-2.1) {$B-kF- E_i$};
       \node at (2,-2.8) {Circle action $z_{k,i}$};
\end{tikzpicture}
\end{minipage}%

\bigskip 

\begin{minipage}{.4\textwidth}
\begin{tikzpicture}[scale=1,font=\footnotesize]
\node at (2,2.1 ) {$B  +kF- E_l -E_m $};
       \fill[black] (2,1.5) ellipse (1 and 0.2);
	\node at (4.2,1.5) {$1,\mu +k -1$};
	\filldraw [black] (0.5,0) circle (2pt);
	\draw[dashed, black!60] (0.5,0) --(1.5,1.5); 
	\node at (0.7,0.8) {$ E_\ell$};
	\draw[dashed, black!60] (0.5,0) --(1.5,-1.5); 
	\node at (0.3,-0.7) {$ F- E_\ell$};
	\filldraw [black] (1.2,0) circle (2pt);
	\draw[dashed, black!60] (1.2,0) --(1.6,1.5); 
	\node at (1.7,0.8) {$ E_m$};
	\draw[dashed, black!60] (1.2,0) --(1.6,-1.5); 
	\node at (1.8,-0.2) {$ F-E_m$};
	\node at (4.5 ,0) {$ \displaystyle{\frac12}$};
	\filldraw [black] (2.5,0) circle (2pt);
	\draw[dashed, black!60] (2.5,0) --(2.3,1.5); 
	\node at (2,0.3) {$ F-E_i$};
	\draw[dashed, black!60] (2.5,0) --(2.3,-1.5); 
	\node at (2.6,-0.7) {$ E_i$};
	\filldraw [black] (3.5,0) circle (2pt);
	\draw[dashed, black!60] (3.5,0) --(2.5,1.5); 
	\node at (3.6,0.7) {$F- E_j$};
	\draw[dashed, black!60] (3.5,0) --(2.5,-1.5); 
	\node at (3.4,-0.7) {$ E_j$};
        \fill[black] (2,-1.5) ellipse (1 and 0.2);
	\node at (4.2,-1.5) {$0,\mu-k-1$};
	\node at (2,-2.1) {$B-kF-E_i -E_j$};
       \node at (2,-2.8) {Circle action $z_{k,ij}$};
\end{tikzpicture}
\end{minipage}%

\bigskip 

\bigskip 

\begin{minipage}{.4\textwidth}
\begin{tikzpicture}[scale=1,font=\footnotesize]
\node at (2,2.1 ) {$B  +kF- E_m $};
       \fill[black] (2,1.5) ellipse (1 and 0.2);
	\node at (4.2,1.5) {$\displaystyle{1,\mu +k - \frac12}$};
	\filldraw [black] (0.5,0) circle (2pt);
	\draw[dashed, black!60] (0.5,0) --(1.5,1.5); 
	\node at (0.6,0.8) {$ E_m$};
	\draw[dashed, black!60] (0.5,0) --(1.5,-1.5); 
	\node at (0.4,-0.8) {$ F-E_m$};
	\filldraw [black] (1.5,0) circle (2pt);
	\draw[dashed, black!60] (1.5,0) --(1.7,-1.5); 
	\node at (1.3,-0.7) {$ E_i$};
	\draw[dashed, black!60] (1.5,0) --(1.7,1.5); 
	\node at (1.3,0.3) {$ F-E_i$};
	\node at (4.5 ,0) {$ \displaystyle{\frac12}$};
	\filldraw [black] (2.5,0) circle (2pt);
	\draw[dashed, black!60] (2.5,0) --(2.3,-1.5); 
	\node at (2.2,-0.7) {$ E_j$};
	\draw[dashed, black!60] (2.5,0) --(2.3,1.5); 
	\node at (2.4,0.5) {$ F-E_j$};
	\filldraw [black] (3.5,0) circle (2pt);
        \fill[black] (2,-1.5) ellipse (1 and 0.2);
        \draw[dashed, black!60] (3.5,0) --(2.5,-1.5); 
	\node at (3.4,-0.7) {$ E_\ell$};
	 \draw[dashed, black!60] (3.5,0) --(2.5,1.5); 
	\node at (3.6,0.7) {$ F-E_\ell$};
	\node at (4.2,-1.5) {$0,\mu-k -\frac32$};
	\node at (2,-2.1) {$B-kF-E_i-E_j-E_\ell$};
       \node at (2,-2.8) {Circle action $z_{k,ij\ell}$};
\end{tikzpicture}
\end{minipage}%
\begin{minipage}{.4\textwidth}
\begin{tikzpicture}[scale=1,font=\footnotesize]
\node at (2,2.1 ) {$B  +kF $};
       \fill[black] (2,1.5) ellipse (1 and 0.2);
	\node at (4.2,1.5) {$\displaystyle{1,\mu +k }$};
	\filldraw [black] (0.5,0) circle (2pt);
	\draw[dashed, black!60] (0.5,0) --(1.5,1.5); 
	\node at (0.3,0.8) {$ F-E_1$};
	\draw[dashed, black!60] (0.5,0) --(1.5,-1.5); 
	\node at (0.7,-0.8) {$ E_1$};
	\filldraw [black] (1.5,0) circle (2pt);
	\draw[dashed, black!60] (1.5,0) --(1.7,-1.5); 
	\node at (1.3,-0.7) {$ E_2$};
	\draw[dashed, black!60] (1.5,0) --(1.7,1.5); 
	\node at (1.3,0.3) {$ F-E_2$};
	\node at (4.5 ,0) {$ \displaystyle{\frac12}$};
	\filldraw [black] (2.5,0) circle (2pt);
	\draw[dashed, black!60] (2.5,0) --(2.3,-1.5); 
	\node at (2.2,-0.7) {$ E_3$};
	\draw[dashed, black!60] (2.5,0) --(2.3,1.5); 
	\node at (2.4,0.5) {$ F-E_3$};
	\filldraw [black] (3.5,0) circle (2pt);
        \fill[black] (2,-1.5) ellipse (1 and 0.2);
        \draw[dashed, black!60] (3.5,0) --(2.5,-1.5); 
	\node at (3.4,-0.7) {$ E_4$};
	 \draw[dashed, black!60] (3.5,0) --(2.5,1.5); 
	\node at (3.6,0.7) {$ F-E_4$};
	\node at (4.2,-1.5) {$0,\mu-k -2$};
	\node at (2,-2.1) {$B-kF-E_1-E_2-E_3-E_4$};
       \node at (2,-2.8) {Circle action $z_{k,1234}$};
\end{tikzpicture}
\end{minipage}%

\caption{Families of graphs of Hamitonian $S^1$-spaces encoded by the edge $MA$. }
\label{graphMA}
\end{figure}

\begin{proposition}\label{actionslist}
The  Hamiltonian circle actions on the symplectic manifolds belonging to the edge $MA$  of the reduced symplectic cone are the ones represented by the labelled graphs in Figure~\ref{graphMA}. In particular, these actions satisfy the following existence conditions: 
\begin{alignat*}{2}
 & \bullet z_k  \ \mbox{exists} \ \mbox{iff} \ \mu >k \ \mbox{and} \ \mu > 2-k; \\
 & \bullet z_{k,i}  \ \mbox{exists} \ \mbox{iff} \ \mu > k +\frac12 \ \mbox{and} \ \mu > \frac32-k; \\
 & \bullet  z_{k,ij} \ \mbox{exists} \ \mbox{iff}\  \mu > k +1;\\
 & \bullet z_{k,ijl} \ \mbox{exists} \  \mbox{iff} \  \mu > k +\frac32;\\
 & \bullet z_{k,1234} \ \mbox{exists} \ \mbox{iff}  \ \mu > k +2. 
\end{alignat*}
where $k \in \Z_{\geq 0}$ and $i,j,\ell,m =1,2,3,4$ are all distinct.
\end{proposition}

\begin{remark}\label{RemarkNumberCircleActions}
As we saw above, when $ 1 < \mu \leq \frac32$ there exist only four Hamiltonian circle actions: $z_{0,12}, z_{0,13},z_{0,14}$ and $z_1$, so there are not enough circle actions to generate the fundamental group. Then when $\mu$ passes $k+\frac12$, where $k \geq 1$ there exist eight more circle actions, namely 
$z_{k-1,123}, z_{k-1,124}, z_{k-1,134}, z_{k-1,234}$ and $z_{k,i}$ with $i=1,2,3,4$, and  when $\mu$ passes $k+1$ eight more circle actions appear:  $z_{k-1,1234}$, $z_{k,12}, z_{k,13},z_{k,14},z_{k,23},z_{k,24},z_{k,34}$ and~$z_{k+1}$. 
\end{remark}

\begin{remark}
Although the number of Hamiltonian circle actions keeps increasing as the values of $\mu$ increase, we know by the work of J. Li, T-J. Li and W. Wu in 
\cite{LiLiWu2} that the rank of $\pi_1$ remains constant as $\mu$ increases so there can only be at most 5 independent circle actions as elements of the fundamental group. 
\end{remark}

\begin{remark}
In the forthcoming sections we prove that, for $\mu > \frac32$, the fundamental group $\pi_1(\Symp(\Muhalf))\otimes \Q$  is indeed generated by circle actions. We choose as a tentative set of generators the set consisting of the 4 circle actions $z_{0,12}, z_{0,13},z_{0,14}$ and $z_1$, which are the only ones that exist for all values of $\mu$ plus the action $z_{1,4}$, which exists as soon as $\mu$ passes $\frac32$. The reason why we choose this action, among the new 8 actions which appear when $\mu$ passes $\frac32$, is geometric and relates with the work of \cite{LiLiWu2}. This action corresponds to a simultaneous rotation of all the spheres except the base in the configuration of 7-exceptional spheres used in the proof of \cite[Lemma 5.10]{LiLiWu2}, where the authors show that  the rank of $\pi_1 (\Symp_h(\Muhalf))$ is 5. While the first 4 actions fix spheres with self-intersection -2, the action $z_{1,4}$ fixes a -3 self-intersection sphere in class $B-F-E_4$. 
\end{remark}

\subsection{The Seidel morphism along the edge \texorpdfstring{$MA$}{MA}}\label{Section:Seidelelements}
In this section we prove our first main theorem, namely Theorem~\ref{main}. The proof relies on the computation of the Seidel elements associated to the circle actions $z_{0,1i}, i=2,3,4$, $z_1$ and $z_{1,4}$, and on the fact that they are linearly independent in the subgroup of invertible elements of the quantum homology of the manifold $\Muhalf$.

\subsubsection{Seidel elements and deformations}\label{DeformationSeidel}
In order to describe the Seidel morphism
\[\Ss:\pi_1(\Symp(\Muhalf))\to \QH_{2n}(\Muhalf)^{\times}\] 
our strategy is to combine the invariance property of $\Ss$ under the natural $\Symp_{h}(M,\omega)$ action, the invariance of Gromov-Witten invariants with respect to symplectic deformations, and Theorem~\ref{Seidelelements} that describes certain Seidel elements associated to subcircles of toric actions on NEF symplectic manifolds.

More precisely, we first observe that the Seidel morphism
\[\Ss:\pi_1(\Symp_0(M,\om))\to \QH_{2n}(M,\om)^{\times}\]
defined in Section~\ref{DefQuantumHomology} is invariant under the action of $\pi_0(\Symp_h(M,\om))$ on $\pi_1(\Symp_0(M,\om)$. This follows from the definition of $\Ss$ and, in the case of Hamiltonian circle actions, can be seen directly from the formula~(\ref{McDuff-Tolman-Formula}) given by McDuff and Tolman. In particular, given a Hamiltonian circle action $\gamma:S^1\to\Ham(M,\om)$, its image $\Ss(\gamma)$ is determined by the labelled extended graph associated to~$\gamma$.

Next, consider a Hamiltonian circle action $\rho$ on $\Muhalf$ and the corresponding loop $[\rho]$ in $\Symp(\Muhalf)$. Let $\Omega(\rho)$ be the space of all symplectic forms that are invariant under this action and write $\Omega_0(\rho)$ for the connected component of $\omega_{\mu,c_i=1/2}$. A \emph{$\Q$-generic symplectic form} is a symplectic form whose cohomology class $ [\mu;c_1,\cdots,c_4]$ is given by 
 coefficients that are linearly independent over $\Q$.
One can show that invariant $\Q$-generic symplectic forms are dense in $\Omega_0(\rho)$: let $\om$ be any invariant symplectic form and $\delta_i$ be invariant closed two forms whose cohomology classes are a basis for 
$H^2(M,\R)$. Then for sufficiently small $c_i$ the two forms $ \om + \sum_i c_i\delta_i$ are invariant and symplectic. The result follows readily. 

The same argument shows that extended graphs whose moment map labels are small continuous perturbations of the labels associated to $\rho$ correspond to deformation equivalent symplecic forms invariant under the same circle action. Consequently, for any $S^1$-manifold $(M,\om')$ associated to such an extended graph, there is no ambiguity as to what the Seidel element $\Ss([\rho])\in\QH_{2n}(M,\om')^{\times}$ is\footnote{In order to compare the Seidel homomorphisms associated to deformation equivalent symplectic forms, a more general approach would be to use an enlarged Novikov ring as in~\cite{Zh-QuantumRings}.}.

We now observe that, due to deformation invariance of Gromov-Witten invariants, given any symplectic form $\om'$ in $\Omega_0(\rho)$, the quantum homology ring $\QH(M,\om')$ is obtained from the quantum ring of a $\Q$-generic class $\om_{\mu,c_1,c_2,c_3,c_4}$ by setting the values of the coefficients $\mu,c_1,c_2,c_3,c_4$ equal to those of $\om'$. We thus have a natural specialization map $\QH(M,\om_{\mu,c_1,c_2,c_3,c_4})\to \QH(M,\om')$ that sends the Seidel element of $[\rho]$ computed relatively to the generic form $\om_{\mu,c_1,c_2,c_3,c_4}$ to the one computed relatively to the form $\om'$.

Finally, we can apply the previous discussion starting with a Hamiltonian circle actions $\rho$ on $\Muhalf$. From the above remarks, we can find a deformation equivalent $\Q$-generic form $\om_{\mu;c_1,c_2,c_3,c_4}$ such that the sizes of the blow-ups satisfy the inequalities $0 < c_4 < c_3 < c_2 < c_1 <  c_i+c_j < 1 < \mu$, with $i,j \in \{1,2,3,4\}$ distinct. Symplectic cohomology classes satisfying this condition are said to be \emph{reduced generic} or, more simply, \emph{generic}. By choosing the sizes carefully, we can embed the circle action $\rho$ into a toric action of a toric manifold that is NEF, and for which the Theorem~\ref{Seidelelements} applies. This allows us to compute the Seidel element of $\rho$. 

In what follows, we consider Hamiltonian actions fixing spheres in the same homology classes as the ones in Proposition \ref{actionslist} and we list in Figure \ref{graphsgeneric} their graphs. Note that we use the same notation for the circle actions in the  generic case as  for the circle actions along the edge $MA$, we do not distinguish one case from the other with regard to notation.

\begin{figure}[thp]
\begin{minipage}{.45\textwidth}
\begin{tikzpicture}[scale=1.1,font=\footnotesize]
\node at (2.5,1.7 ) {$B +kF  -E_1 - E_2 -E_3 - E_4 $};
       \fill[black] (2,1) ellipse (1 and 0.2);
	\node at (5.1,1) {$1,\mu +k -c_1-c_2-c_3-c_4 $};
	\filldraw [black] (2,0.5) circle (2pt);
        \draw[dashed, black!60] (2,1) --(2,0.5); 
	\node at (1.7,0.6) {$ E_4$};
	 \draw[dashed, black!60] (2,1) --(2,-2); 
	\node at (1.7,-1.3) {$ F-E_4$};
	\filldraw [black] (2.2,0.1) circle (2pt);
	 \draw[dashed, black!60] (2.2,0.8) --(2.2,0.1); 
	\node at (2.4,0.4) {$ E_3$};
	 \draw[dashed, black!60] (2.2,0.8) --(2.2,-2); 
	\node at (2.7,-0.4) {$ F-E_3$};
	\node at (3.6,0.5) {$ 1-c_4$};
	\node at (3.6,0.1) {$ 1-c_3$};
	\node at (3.6,-0.6) {$ 1-c_2$};
	\node at (3.6,-1.4) {$ 1-c_1$};
	\filldraw [black] (1.4,-0.6) circle (2pt);
	 \draw[dashed, black!60] (1.4,-0.6) --(1.4,1); 
	\node at (1.1,0) {$ E_2$};
	 \draw[dashed, black!60] (1.4,-0.6) --(1.4,-2); 
	\node at (0.8,-1) {$ F-E_2$};
	\filldraw [black] (2.7,-1.4) circle (2pt);
	 \draw[dashed, black!60] (2.7,-1.4) --(2.7,1); 
	\node at (3,-1) {$ E_1$};
	\draw[dashed, black!60] (2.7,-1.4) --(2.7,-2); 
	\node at (3.3,-1.7) {$ F-E_1$};
        \fill[black] (2,-2) ellipse (1 and 0.2);
	\node at (3.7,-2) {$0,\mu - k$};
	 \node at (2,-2.7) {$B-kF$};
       \node at (2,-3.4) {Circle action $z_k$};
\end{tikzpicture}
\end{minipage}%
\begin{minipage}{.45\textwidth}
\begin{tikzpicture}[scale=1.1,font=\footnotesize]
\node at (2, 1.7 ) {$B +kF  -E_j - E_l -E_m $};
       \fill[black] (2,1) ellipse (1 and 0.2);
	\node at (4.8,1) {$\displaystyle{1,\mu +k  -c_j-c_\ell -c_m}$};
	\filldraw [black] (1.2,0.3) circle (2pt);
	\draw[dashed, black!60] (1.2,1) --(1.2,0.3); 
	\node at (0.9,0.6) {$ E_m$};
	\draw[dashed, black!60] (1.2,1) --(1.2,-2); 
	\node at (0.6,-1.5) {$ F-E_m$};	
	\filldraw [black] (2,-0.3) circle (2pt);
	\draw[dashed, black!60] (2,1) --(2,-0.3); 
	\node at (2.2,0.1) {$ E_\ell$};
	\draw[dashed, black!60] (2,1) --(2,-2); 
	\node at (2.3,-1.3) {$ F-E_\ell$};
	\filldraw [black] (2.6, -0.8) circle (2pt);
	\draw[dashed, black!60] (2.6,1) --(2.6,-0.8); 
	\node at (2.8,-0.4) {$ E_j$};
	\draw[dashed, black!60] (2.6,1) --(2.6,-2); 
	\node at (3.1,-1.7) {$ F-E_j$};
	\filldraw [black] (1.7,-1.2) circle (2pt);
	\draw[dashed, black!60] (1.7,-2) --(1.7,1); 
	\node at (1.7,-0.7) {$ F-E_i$};
	\draw[dashed, black!60] (1.7,-2) --(1.7,-1.2); 
	\node at (1.5,-1.5) {$ E_i$};
	\node at (3.6,0.3) {$ 1-c_m$};
	\node at (3.6,-0.3) {$ 1-c_\ell$};
	\node at (3.6,-0.8) {$ 1-c_j$};
	\node at (3.3,-1.2) {$ c_i$};
        \fill[black] (2,-2) ellipse (1 and 0.2);
	\node at (4.1,-2) {$\displaystyle{0,\mu -k - c_i}$};
	 \node at (2,-2.7) {$B-kF-E_i$};
       \node at (2,-3.4) {Circle action $z_{k,i}$};
\end{tikzpicture}
\end{minipage}%

\begin{minipage}{.45\textwidth}
\begin{tikzpicture}[scale=1.1,font=\footnotesize]
\node at (2,1.7 ) {$B +kF  -E_l - E_m$};
       \fill[black] (2,1) ellipse (1 and 0.2);
	\node at (4.4,1) {$1,\mu +k -c_\ell-c_m $};
	\filldraw [black] (1.2,0.3) circle (2pt);
	\draw[dashed, black!60] (1.2,1) --(1.2,0.3); 
	\node at (0.9,0.6) {$ E_m$};
	\draw[dashed, black!60] (1.2,-2) --(1.2,0.3); 
	\node at (0.6,-1) {$ F-E_m$};
	\filldraw [black] (2.7,-0.3) circle (2pt);
	\draw[dashed, black!60] (2.7,1) --(2.7,-0.3); 
	\node at (2.9,0.1) {$ E_\ell$};
	\draw[dashed, black!60] (2.7,-2) --(2.7,-0.3); 
	\node at (3.2,-1.6) {$ F-E_\ell$};
	\filldraw [black] (1.7, -0.8) circle (2pt);
	\draw[dashed, black!60] (1.7,-2) --(1.7,-0.8); 
	\node at (1.5,-1.3) {$ E_i$};
	\draw[dashed, black!60] (1.7,1) --(1.7,-0.8); 
	\node at (2.15,0.5) {$ F-E_i$};
	\filldraw [black] (2.1,-1.2) circle (2pt);
	\draw[dashed, black!60] (2.1,-2) --(2.1,-1.2); 
	\node at (2.3,-1.6) {$ E_j$};
	\draw[dashed, black!60] (2.1,1) --(2.1,-1.2); 
	\node at (2.4,-0.6) {$ F-E_j$};
	\node at (3.6,0.3) {$ 1-c_m$};
	\node at (3.6,-0.3) {$ 1-c_\ell$};
	\node at (3.3,-0.8) {$ c_i$};
	\node at (3.3,-1.2) {$ c_j$};
	 \fill[black] (2,-2) ellipse (1 and 0.2);
	\node at (4.4,-2) {$0,\mu - k-c_i-c_j$};
	 \node at (2,-2.7) {$B-kF- E_i-E_j$};
       \node at (2,-3.4) {Circle action $z_{k,ij}$};
\end{tikzpicture}
\end{minipage}%

\bigskip 

\begin{minipage}{.45\textwidth}
\begin{tikzpicture}[scale=1.1,font=\footnotesize]
\node at (2,1.7 ) {$B +kF  -E_m $};
       \fill[black] (2,1) ellipse (1 and 0.2);
	\node at (4.1,1) {$\displaystyle{1,\mu +k - c_m}$};
       \filldraw [black] (2.3,0.3) circle (2pt);
       \draw[dashed, black!60] (2.3,1) --(2.3,0.3); 
	\node at (2.6,0.6) {$ E_m$};
	 \draw[dashed, black!60] (2.3,-2) --(2.3,0.3); 
	\node at (2.9,-1.6) {$ F-E_m$};
	\filldraw [black] (1.1,-0.3) circle (2pt);
	\draw[dashed, black!60] (1.1,-2) --(1.1,-0.3); 
	\node at (0.9,-0.8) {$ E_i$};
	\draw[dashed, black!60] (1.1,1) --(1.1,-0.3); 
	\node at (0.6,0.5) {$ F-E_i$};
	\filldraw [black] (1.5, -0.8) circle (2pt);
	\draw[dashed, black!60] (1.5,-2) --(1.5,-0.8); 
	\node at (1.3,-1.3) {$ E_j$};
	\draw[dashed, black!60] (1.5,1) --(1.5,-0.8); 
	\node at (1,0) {$ F-E_j$};
	\filldraw [black] (1.8,-1.2) circle (2pt);
	\draw[dashed, black!60] (1.8,-2) --(1.8,-1.2); 
	\node at (2.1,-1.4) {$ E_\ell$};
	\draw[dashed, black!60] (1.8,1) --(1.8,-1.2); 
	\node at (2.3,-0.5) {$ F-E_\ell$};
	\node at (3.6,0.3) {$ 1-c_m$};
	\node at (3.3,-0.3) {$ c_i$};
	\node at (3.3,-0.8) {$ c_j$};
	\node at (3.3,-1.2) {$ c_\ell$};
	 \fill[black] (2,-2) ellipse (1 and 0.2);
	\node at (4.8,-2) {$\displaystyle{0,\mu -k - c_i-c_j- c_\ell}$};
	 \node at (2,-2.7) {$B-kF-E_i -E_j -E_l$};
       \node at (2,-3.4) {Circle action $z_{k,ijl}$};
\end{tikzpicture}
\end{minipage}%
\begin{minipage}{.45\textwidth}
\begin{tikzpicture}[scale=1.1,font=\footnotesize]
\node at (2,1.7 ) {$B +kF $};
       \fill[black] (2,1) ellipse (1 and 0.2);
	\node at (3.7,1) {$\displaystyle{1,\mu +k }$};
         \filldraw [black] (2.8,0.3) circle (2pt);
         \draw[dashed, black!60] (2.8,-2) --(2.8,0.3); 
	\node at (2.6,-0.2) {$ E_1$};
	 \draw[dashed, black!60] (2.8,1) --(2.8,0.3); 
	\node at (3.3,0.6) {$ F-E_1$};
	\filldraw [black] (1.2,-0.3) circle (2pt);
	\draw[dashed, black!60] (1.2,1) --(1.2,-0.3); 
	\node at (0.6,0.5) {$F- E_2$};
	\draw[dashed, black!60] (1.2,-2) --(1.2,-0.3); 
	\node at (0.9,-0.8) {$ E_2$};
	\filldraw [black] (1.5, -0.8) circle (2pt);
	\draw[dashed, black!60] (1.5,-2) --(1.5,-0.8); 
	\node at (1.7,-1.3) {$ E_3$};
	\draw[dashed, black!60] (1.5,1) --(1.5,-0.8); 
	\node at (1.7,0.5) {$ F-E_3$};
	\filldraw [black] (2.3,-1.2) circle (2pt);
	\draw[dashed, black!60] (2.3,-2) --(2.3,-1.2); 
	\node at (2.5,-1.6) {$ E_4$};
	\draw[dashed, black!60] (2.3,1) --(2.3,-1.2); 
	\node at (2.3,-0.7) {$ F-E_4$};
	\node at (3.3,0.3) {$ c_1$};
	\node at (3.3,-0.3) {$ c_2$};
	\node at (3.3,-0.8) {$ c_3$};
	\node at (3.3,-1.2) {$ c_4$};
        \fill[black] (2,-2) ellipse (1 and 0.2);
	\node at (5.1,-2) {$0,\mu - k-c_1-c_2-c_3-c_4$};
	 \node at (2,-2.7) {$B-kF- E_1-E_2-E_3-E_4$};
       \node at (2,-3.4) {Circle action $z_{k,1234}$};
\end{tikzpicture}
\end{minipage}%

\caption{Graphs of the circle actions $z_k$, $z_{k,i}$, $z_{k,ij}$, $z_{k,ijl}$ and  $z_{k,1234}$ in the generic case}
\label{graphsgeneric}
\end{figure}
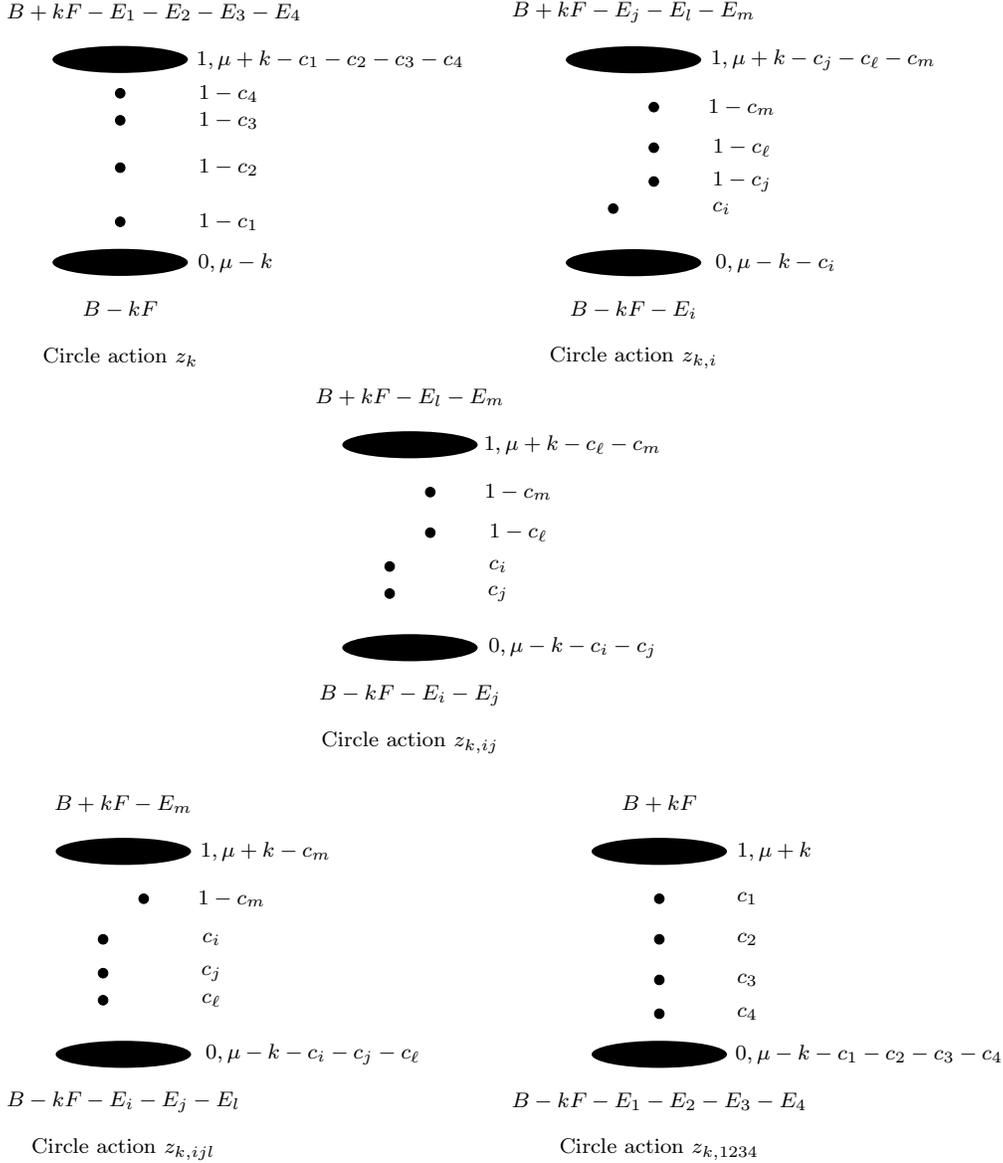

\subsubsection{Computing Seidel elements from toric actions}
First consider the actions $z_{0,12}, z_{0,13}, z_{0,14}$ and the polygon of Figure \ref{Toric picture T}. The action $z_{0,12}$ corresponds to the circle action whose moment map is the first component of the moment map associated to the toric action $T_{0,12}$, represented in this figure. Moreover, it is clear that the homology classes of the fixed spheres are $B-E_1-E_2$ and $B-E_3-E_4$. 
\begin{figure}[thp]
\begin{center}
\begin{tikzpicture}[scale=0.8, roundnode/.style={circle, draw=black!80, thick, minimum size=7mm}, font=\footnotesize]
 
    \draw[->][black!70] (-3.2,0) -- (1,0); 
    \draw[->][black!70] (-3,-0.2) -- (-3,5.5); 

	\draw[brown] (-3,1.5) -- (-1.5,0);
	\draw[brown] (-3,1.5) -- (-3,3.8);
    \draw[brown] (-3,3.8) -- (-1.8,5);
    \draw[brown] (-1.8,5) -- (-1,5);
     \draw[brown] (-1,5) -- (0,4);
    \draw[brown] (0,4) -- (0,0.3);
    \draw[brown] (-1.5,0) -- (-0.3,0);
    \draw[brown] (-0.3,0) -- (0,0.3);
   
	\draw[dashed, black!60] (-1.8,5) --(-1.8,0);
	\draw[dashed, black!60] (-1,5) -- (-1,0);
    \draw[dashed, black!60] (0,0.2) -- (0,0);
	\node[rotate=-45] at (-1.3,-0.3) {$ c_{1} $};
	\node[rotate=-45] at (-0.6,-0.6) {$ 1-c_{3} $};
	\node[rotate=-45] at (-1.8,-0.3) {$  c_{2} $};
	\node[rotate=-45] at (-0, -0.6) {$ 1-c_{4} $};
	\node[blue] at (0.2, 0) {$ E_4$};
	 \node[blue] at (-2.5,.7) {$E_1$};
 \node[blue] at (-2.6,4.6) {$E_2 $};
\node[blue] at (-0.2,4.6) {$E_3 $};
\node[blue] at (1.2,2.1) {$B-E_3-E_4$};
\node[blue] at (-4.2,2.4) {$B-E_1-E_2$};

\end{tikzpicture}
\end{center}
\caption{Toric action $T_{0,12}$}
 \label{Toric picture T}
\end{figure}
Then Theorem \ref{Seidelelements}.(2a) yields
\begin{equation*}
\mathcal S (z_{0,12})= [B-E_3-E_4] \otimes q \, \frac{t^{\epsilon}}{1- t^{c_3+c_ 4-\mu}}
\end{equation*}
where $\epsilon$ is the maximum of the momentum map of the action $z_{0,12}$, $\phi_{\rm max}(F_{\rm max})$, where $F_{\rm max}$ is the maximal 2-sphere whose momentum image is the edge in class $B-E_3-E_4$, in the normalized polygon. 
In general, we obtain 
\begin{equation*}
\mathcal S (z_{0,1i})= [B-E_j-E_\ell] \otimes q \, \frac{t^{\epsilon}}{1- t^{c_j+c_ \ell-\mu}} \quad \mbox{where} \quad j \neq \ell \neq i
\end{equation*}
One can check that the normalized polygon yields
$$\epsilon = \frac{c_j^3+ 3c_1^2-c_1^3+c_\ell^3+3c_i^2-c_i^3-3\mu}{3(c_1^2+c_2^2+c_3^2+c_4^2-2\mu)}.$$
Hence, if $c_i=1/2$ for all $i$ we obtain $\epsilon = 1/2$ and 
\begin{equation}\label{Seidel elements}
\mathcal S (z_{0,1i})= [B-E_j-E_\ell] \otimes q \, \frac{t^{\frac12}}{1- t^{1-\mu}} \quad \mbox{where} \quad j \neq \ell \neq i
\end{equation}
Note that the expression is well defined because $\mu >1$. 

Consider now the polygon of Figure \ref{ToricT1} which represents a toric action on $\Mucccc$, and for which the homology classes of the edges are represented in the figure. The graph of the circle action obtained by projection of the polygon onto the $x$-axis  is also represented in Figure  \ref{ToricT1}.  Note that it becomes the action $z_1$ defined in Figure \ref{firstcase} if $c_i=1/2$ for all $i$.

\begin{figure}[thp]
\begin{minipage}{.48\textwidth}
\begin{tikzpicture}[scale=0.8, roundnode/.style={circle, draw=black!80, thick, minimum size=7mm}, font=\footnotesize]
 
    \draw[->][black!70] (-3.2,0) -- (1,0); 
    \draw[->][black!70] (-3,-2) -- (-3,6.2); 

	\draw[brown]  (-3,0) -- (-1.5,-1.5);
	\draw[brown]  (-3,0) -- (-3,4);
    \draw[brown]  (-3,4) -- (-1.2,5.8);
    \draw[brown]  (-1.2,5.8) -- (-0.3,5.8);
     \draw[brown]  (-0.3,5.8) -- (0,5.5);
    \draw[brown]  (0,5.5) -- (0,-1);
    \draw[brown]  (0,-1) -- (-0.5,-1.5);
    \draw[brown]  (-0.5,-1.5) -- (-1.5,-1.5);
   
       \draw[dashed, black!60] (-1.5,-1.5) --(-1.5,0);
	\draw[dashed, black!60] (-0.5,-1.5) -- (-0.5,0);
	\draw[dashed, black!60] (-1.2,5.8) -- (-1.2,0);
	\draw[dashed, black!60] (-0.3,5.8) -- (-0.3,0);

    \node[blue] at (0.2,5.8) {$E_{4}$};
    \node[blue] at (-1.1,6.1) {$E_2-E_4$};
    \node[blue] at (-3.8,2) {$B-F$};
     \node[blue] at (2.4,2.5) {$B+F -E_1-E_2-E_3-E_4$};
     \node[blue] at (0.1,-1.35) {$E_3$};
     \node[blue] at (-1,-1.8) {$E_1-E_3$};
     \node[blue] at (-2.8,5.1) {$ F-E_2 $};
     \node[blue] at (-2.8,-1.1) {$F-E_1  $};
      \node[rotate=-45] at (-1.9,0.5) {$1-c_1$};
   \node[rotate=-45] at (-0.8,-0.5) {$1-c_2$};
    \node[rotate=-45] at (-0.8,0.5) {$1-c_3$};
    \node[rotate=-45] at (0,-0.5) {$1-c_4$};
\end{tikzpicture}
\end{minipage}%
\begin{minipage}{.48\textwidth}
\begin{tikzpicture}[scale=1.1,font=\footnotesize]
\node at (2.5,1.7 ) {$B +F  -E_1 - E_2 -E_3 - E_4 $};
       \fill[black] (2,1) ellipse (1 and 0.2);
	\node at (5.3,1) {$1,\mu +1 -c_1-c_2-c_3-c_4 $};
	\filldraw [black] (2,0.5) circle (2pt);
        \draw[dashed, black!60] (2,1) --(2,0.5); 
	\node at (1.7,0.6) {$ E_4$};
	 \draw[dashed, black!60] (2,1) --(2,-2); 
	\node at (1.7,-1.3) {$ F-E_4$};
	\filldraw [black] (2.2,0.1) circle (2pt);
	 \draw[dashed, black!60] (2.2,0.8) --(2.2,0.1); 
	\node at (2.4,0.4) {$ E_3$};
	 \draw[dashed, black!60] (2.2,0.8) --(2.2,-2); 
	\node at (2.7,-0.4) {$ F-E_3$};
	\node at (4,0.5) {$ 1-c_4$};
	\node at (4,0.1) {$ 1-c_3$};
	\node at (4,-0.6) {$ 1-c_2$};
	\node at (4,-1.4) {$ 1-c_1$};
	\filldraw [black] (1.4,-0.6) circle (2pt);
	 \draw[dashed, black!60] (1.4,-0.6) --(1.4,1); 
	\node at (1.1,0) {$ E_2$};
	 \draw[dashed, black!60] (1.4,-0.6) --(1.4,-2); 
	\node at (0.8,-1) {$ F-E_2$};
	\filldraw [black] (2.7,-1.4) circle (2pt);
	 \draw[dashed, black!60] (2.7,-1.4) --(2.7,1); 
	\node at (2.9,-1) {$ E_1$};
	\draw[dashed, black!60] (2.7,-1.4) --(2.7,-2); 
	\node at (3.2,-1.7) {$ F-E_1$};
        \fill[black] (2,-2) ellipse (1 and 0.2);
	\node at (4.1,-2) {$0,\mu - 1$};
	 \node at (2,-2.7) {$B-F$};
       \node at (2,-3.4) {Circle action $z_1$};
\end{tikzpicture}
\end{minipage}%
\caption{Toric action $T_1$ and its projection to the $x$-axis}
 \label{ToricT1}
 \end{figure}

Now  Theorem \ref{Seidelelements}.(2a) gives 
\begin{equation*}
\mathcal  S(z_1) = [B+F-E_1-E_2-E_3-E_4] \otimes q \, \frac{t^{1-\epsilon}}{1- t^{c_1+c_2+c_3+c_4-\mu-1}}
\end{equation*}
where in this case the maximum of the momentum map on the invariant sphere is given by
$$ \epsilon = \frac{-1 -c_1^3 + 3c_1^2 +3c_2^2-c_2^3 +3c_3^2 -c_3^3+3c_4^2-c_4^3+ 3c_1c_4^2-3c_3c_4^2-3\mu}{3(c_1^2+c_2^2+c_3^2+c_4^2-2\mu)}$$
which is simply equal to $1/2$ if $c_i=1/2$ for all $i$. Therefore, we obtain 
\begin{equation}\label{Seidelz1}
\mathcal S(z_1)= [B+F-E_1-E_2-E_3-E_4] \otimes q \, \frac{t^{\frac12}}{1- t^{1-\mu}}.
\end{equation}

Finally, we compute the Seidel element of the circle action $z_{1,4}$, seen as an element of the fundamental group of $\Symp_h (\Mucccc)$. In order to do that first consider the Delzant 
polygon of Figure \ref{non-nef}. It represents a toric action on $\Mucccc$ and its projections onto the $x$-axis and $y$-axis are represented  in Figure \ref{graphsnon-nef}. 
Note that the projection onto the $x$-axis corresponds to the graph of the action $z_{1,4}$  given in Figure \ref{secondcase}.  Let us denote the action  whose graph is obtained by projection  onto the $y$-axis by $s_{1,4}$. 

\begin{figure}[thp]

\begin{tikzpicture}[scale=0.7, roundnode/.style={circle, draw=black!80, thick, minimum size=7mm}, font=\footnotesize]
 
    \draw[->][black!70] (-3.2,0) -- (1,0); 
    \draw[->][black!70] (-3,-3) -- (-3,6.2); 

 \draw[brown] (-2.5,-0.5) -- (-0.3, -2.5);
	\draw[brown] (-3,0.5) -- (-2.5,-0.5);
	\draw[brown] (-3,0.5) -- (-3,4);
    \draw[brown] (-3,4) -- (-1.5,5.5);
    \draw[brown] (-1.5,5.5) -- (-0.5,5.5);
     \draw[brown] (-0.5,5.5) -- (0,5);
    \draw[brown] (0,5) -- (0,-2.5);
    \draw[brown] (-0.3,-2.5) -- (0,-2.5);
   
        \draw[dashed, black!60] (-1.5,5.5) --(-3,5.5);
	\draw[dashed, black!60] (-3,5) -- (0,5);
	\draw[dashed, black!60] (-3,-0.5) -- (-2.5,-0.5);
	\draw[dashed, black!60] (-0.3,-2.5) -- (-3,-2.5);
	\draw[dashed, black!60] (-0.3,-2.5) -- (-0.3,0);	
	\draw[dashed, black!60] (-2.5,0) -- (-2.5,-0.5);
	 \draw[dashed, black!60] (-1.5,5.5) --(-1.5,0);
	 \draw[dashed, black!60] (-0.5,5.5) --(-0.5,0);
	 
    \node[blue] at (0.1,5.4) {$E_{2}$};
    \node[blue] at (-1.1,5.8) {$E_1-E_2$};
    \node[blue] at (-4.2,2) {$B-F-E_4$};
     \node[blue] at (2.5,2) {$B+F -E_1-E_2-E_3$};
     \node[blue] at (-0.2,-2.8) {$E_3$};
     \node[blue] at (-2.8,-1.5) {$F- E_3-E_4$};
     \node[blue] at (-2.7,5.25) {$ F-E_1$};
     \node[blue] at (-2.95, -0.2) {$E_4$};
    \node at (-3.3 ,0.5) {$c_4$};
    \node at (-3.5 ,-0.5) {$-c_4$};
    \node at (-3.9 ,-2.5) {$-1+c_3$};
    \node at (-3.6 ,4) {$\mu-1$};
    \node at (-4.2 ,4.9) {$\mu-c_1-c_2$};
      \node at (-3.7 ,5.7) {$\mu-c_1$};
       \node[rotate=45] at (-2.4 ,0.25) {$c_4$};
         \node[rotate=45] at (-1.2 ,0.55) {$1-c_1$};
          \node[rotate=45] at (-0.2 ,0.55) {$1-c_2$};
                \node[rotate=45] at (0.2 ,0.55) {$1-c_3$};
      
\end{tikzpicture}

\caption{Toric action $(z_{1,4}, s_{1,4})$}
 \label{non-nef}
 \end{figure}

\begin{figure}[thp]
\begin{minipage}{.48\textwidth}
\begin{tikzpicture}[scale=1.1,font=\footnotesize]
\node at (2, 1.7 ) {$B +F  -E_1 - E_2 -E_3 $};
       \fill[black] (2,1) ellipse (1 and 0.2);
	\node at (4.8,1) {$\displaystyle{1,\mu +1  -c_1-c_2 -c_3}$};
	\filldraw [black] (1.2,0.3) circle (2pt);
	\draw[dashed, black!60] (1.2,1) --(1.2,0.3); 
	\node at (0.9,0.6) {$ E_3$};
	\draw[dashed, black!60] (1.2,1) --(1.2,-2); 
	\node at (0.6,-1.5) {$ F-E_3$};	
	\filldraw [black] (2,-0.3) circle (2pt);
	\draw[dashed, black!60] (2,1) --(2,-0.3); 
	\node at (2.2,0.1) {$ E_2$};
	\draw[dashed, black!60] (2,1) --(2,-2); 
	\node at (2.3,-1.3) {$ F-E_2$};
	\filldraw [black] (2.6, -0.8) circle (2pt);
	\draw[dashed, black!60] (2.6,1) --(2.6,-0.8); 
	\node at (2.8,-0.4) {$ E_1$};
	\draw[dashed, black!60] (2.6,1) --(2.6,-2); 
	\node at (3.1,-1.7) {$ F-E_1$};
	\filldraw [black] (1.7,-1.2) circle (2pt);
	\draw[dashed, black!60] (1.7,-2) --(1.7,1); 
	\node at (1.7,-0.7) {$ F-E_4$};
	\draw[dashed, black!60] (1.7,-2) --(1.7,-1.2); 
	\node at (1.5,-1.5) {$ E_4$};
	\node at (3.9,0.3) {$ 1-c_3$};
	\node at (3.9,-0.3) {$ 1-c_2$};
	\node at (3.9,-0.8) {$ 1-c_1$};
	\node at (3.7,-1.2) {$ c_4$};
        \fill[black] (2,-2) ellipse (1 and 0.2);
	\node at (4.3,-2) {$\displaystyle{0,\mu -1 - c_4}$};
	 \node at (2,-2.7) {$B-F-E_4$};
       \node at (2,-3.4) {Circle action $z_{1,4}$};
\end{tikzpicture}
\end{minipage}%
\begin{minipage}{.48\textwidth}
\quad \quad 
\begin{tikzpicture}[scale=1.1,font=\footnotesize]
\node at (2,2.4 ) {$E_1-E_2 $};
       \fill[black] (2,1.8) ellipse (1 and 0.2);
	\node at (4.6,1.8) {$\displaystyle{\mu-c_1, c_1-c_2}$};
	\filldraw [black] (1.5,0.4) circle (2pt);
	 \draw[dashed, black!60] (1.5,0.4) --(1.5,1.8); 
	\node at (1,0.9) {$ F-E_1$};
	\filldraw [black] (2.5,1) circle (2pt);
	 \draw[dashed, black!60] (2.5,-1.5) --(2.5,1.8); 
	\node at (2.8,1.3) {$ E_2$};
	 \draw[dashed, black!60] (2.5,1) --(2.5,1.8); 
	\node at (2.8,1.3) {$ E_2$};
	\node at (4.4 ,1) {$\mu -c_1-c_2 $};
        \node at (4 ,0.4) {$\mu -1$};
	\filldraw [black] (1.5,-0.2) circle (2pt);
	\draw[dashed, black!60] (1.5,0.4) --(1.5, -0.2); 
	\filldraw [black] (1.5,-0.8) circle (2pt);
	\node at (3.8 ,-0.2) {$c_4$};
        \node at (3.9 ,-0.8) {$-c_4$};
        \fill[black] (2,-1.5) ellipse (1 and 0.2);
	\node at (4.4,-1.5) {$\displaystyle{-1+c_3,c_3}$};
	 \node at (2,-2) {$E_3$};
	  \node at (0.7,0.1) {$B-F-E_4$};
	   \node at (1.8,-0.5) {$E_4$};
	      \node at (3.9,-0.5) {$B+F -E_1-E_2-E_3$};
	  \draw (1.5,-0.2) -- (1.5,-0.8);
	    \node at (1.3,-0.5) {$2$};
	    \draw[dashed, black!60] (1.5,-0.8) --(1.5, -1.5); 
	      \node at (0.7,-1) {$F-E_3- E_4$};

      \node at (2,-2.7) {Circle action $s_{1,4}$};
\end{tikzpicture}
\end{minipage}%

\caption{Graphs of circle actions $z_{1,4}$ and $s_{1,4}$, respectively. }
 \label{graphsnon-nef}
 \end{figure}

Since $c_1(B-F-E_4)= -1 <0$ the complex manifold corresponding to  Figure \ref{non-nef} is not NEF and we cannot apply immediately Theorem \ref{Seidelelements} to compute  the Seidel element of $z_{1,4}$. Instead we need to consider some auxiliary polygons for which the underlying complex manifolds are NEF  and relate the circle actions represented on those polygons with the actions $z_{1,4}$ and $s_{1,4}$. More precisely, consider the Delzant polygon, on the left in Figure \ref{nef} and apply the $GL(2, \Z)$ transformation represented by the matrix 
\begin{equation}\label{matrix}
 \left (
\begin{array}{cc}
1  & 0 \\
1 & 1 
\end{array} \right)
\end{equation} 
 to this polygon as well as to the polygon of Figure \ref{non-nef}. Then consider the projection onto the $y$-axis of the two transformed polygons and 
 denote the action obtained this way from polygon of Figure \ref{nef} by $t_{1,4}$. It is easy to check that the two graphs coincide 
  which implies that as elements of $\pi_1(\Symp_h (\Mucccc))$ the following identification holds
 \begin{equation}\label{eqnon-nef}
 z_{1,4}+ s_{1,4} = t_{1,4}. 
\end{equation}

\begin{figure}[thp]
\begin{minipage}{.40\textwidth}
\begin{tikzpicture}[scale=0.7, roundnode/.style={circle, draw=black!80, thick, minimum size=7mm}, font=\footnotesize]
 
    \draw[->][black!70] (-3.2,0) -- (0.7,0); 
    \draw[->][black!70] (-3,-2) -- (-3,5.8); 

 \draw[brown] (-1.5,-1.5) -- (-0.5, -2);
	\draw[brown] (-3,0) -- (-1.5,-1.5);
	\draw[brown] (-3,0) -- (-3,4);
    \draw[brown] (-3,4) -- (-1.5,5.5);
    \draw[brown] (-1.5,5.5) -- (-0.5,5.5);
     \draw[brown] (-0.5,5.5) -- (0,5);
    \draw[brown] (0,5) -- (0,-2);
    \draw[brown] (-0.5,-2) -- (0,-2);
    \node[blue] at (0.1,5.4) {$E_{2}$};
    \node[blue] at (-1.1,5.8) {$E_1-E_2$};
    \node[blue] at (-3.8,2) {$B-F$};
     \node[blue] at (2.3,2) {$B+F -E_1-E_2-E_3$};
     \node[blue] at (0.2,-2.3) {$E_3-E_4$};
     \node[blue] at (-3.4,-0.9) {$F- E_3-E_4$};
     \node[blue] at (-2.8,5.1) {$ F-E_1$};
     \node[blue] at (-1.3, -1.9) {$E_4$};

\end{tikzpicture}
\end{minipage}%
\begin{minipage}{.40\textwidth}

\begin{tikzpicture}[scale=0.7, roundnode/.style={circle, draw=black!80, thick, minimum size=7mm}, font=\footnotesize]
 
    \draw[->][black!70] (-3.2,0) -- (0.7,0); 
    \draw[->][black!70] (-3,-0.3) -- (-3,6.2); 

 \draw[brown] (-1.5,0) -- (-1, 0.25);
	\draw[brown] (-3,0) -- (-1.5,0);
	\draw[brown] (-3,0) -- (-3,4);
    \draw[brown] (-3,4) -- (-1.5,5.5);
    \draw[brown] (-1.5,5.5) -- (-0.5,6);
     \draw[brown] (-0.5,6) -- (0,6);
    \draw[brown] (0,6) -- (0,1.25);
    \draw[brown] (-1,0.25) -- (0,1.25);
   
       \draw[dashed, black!60] (-3,6) --(-0.5,6);
    \draw[dashed, black!60] (-3,5.5) -- (-1.5,5.5);
    \draw[dashed, black!60] (-3,1.25) -- (0,1.25);
    \draw[dashed, black!60] (-3,0.25) -- (-1,0.25);

  \node at (-4.5,6) {$\mu+1-c_1-c_2$};
  \node at (-4.1,5.5) {$\mu+1-2c_1$};
    \node at (-3.6,4) {$\mu-1$};
      \node at (-3.3,1.25) {$c_3$};
        \node at (-3.3,0.25) {$c_4$};
        
    \node[blue] at (-0.1,6.25) {$E_{2}$};
    \node[blue] at (-2,5.8) {$E_1-E_2$};
    \node[blue] at (-3.7,2) {$B-F$};
     \node[blue] at (2.3,3) {$B+F -E_1-E_2-E_3$};
     \node[blue] at (0.2,0.5) {$E_3- E_4$};
     \node[blue] at (-2.8,-0.3) {$F- E_3-E_4$};
     \node[blue] at (-2.8,5.1) {$ F-E_1$};
     \node[blue] at (-1.1, -0.1) {$E_4$};

\end{tikzpicture}
\end{minipage}%

\caption{Polygon representing a NEF complex manifold and its transformation by the $GL(2, \Z)$ matrix  \eqref{matrix}}
 \label{nef}
 \end{figure}

On the other hand consider the polygon on the left in Figure \ref{auxiliarynef} which represents a toric action, denoted by $(x_1,y_1)$, on $\Mucccc$ and its transformation by the same $GL(2,\Z)$  matrix \eqref{matrix}. The projection of the transformed polygon onto the $y$-axis yields a graph that, up to translation, coincides with the graph of the action $s_{1,4}$ which  implies that $s_{1,4}=x_1+y_1.$

\begin{figure}[thp]
\begin{minipage}{.40\textwidth}
\begin{tikzpicture}[scale=0.7, roundnode/.style={circle, draw=black!80, thick, minimum size=7mm}, font=\footnotesize]
 
    \draw[->][black!70] (-3.2,0) -- (0.7,0); 
    \draw[->][black!70] (-3,-0.2) -- (-3,5.5); 

	\draw[brown] (-3,0.7) -- (-2.3,0);
	\draw[brown] (-3,0.7) -- (-3,5);
    \draw[brown] (-3,5) -- (-1.5,5);
    \draw[brown] (-1.5,5) -- (-1,4.75);
     \draw[brown] (-1,4.75) -- (0,3.75);
    \draw[brown] (0,3.75) -- (0,0.4);
    \draw[brown] (-2.3,0) -- (-0.4,0);
    \draw[brown] (-0.4,0) -- (0,0.4);
    \node[blue] at (-3.1,0.3) {$ E_3$};
    \node[blue] at (-3.8,2.7) {$B-E_3$};
	\node[blue] at (-2.3,5.25) {$F-E_2$};
	\node[blue] at (-0.9,5) {$E_2 $};
	\node[blue] at (0.3,4.5) {$E_1 -E_2 $};
	\node[blue] at (1.4,2.5) {$ B-E_1 -E_4$};
	\node[blue] at (-1.6,-0.3) {$  F-E_3-E_4$};
	\node[blue] at (0.2, 0) {$ E_4$};

\end{tikzpicture}
\end{minipage}%
\begin{minipage}{.40\textwidth}

\begin{tikzpicture}[scale=0.7, roundnode/.style={circle, draw=black!80, thick, minimum size=7mm}, font=\footnotesize]
 
    \draw[->][black!70] (-3.2,0.5) -- (0.7,0.5); 
    \draw[->][black!70] (-3,0.3) -- (-3,6.2); 

 \draw[brown] (-2,1) -- (-0.5, 2.5);
	\draw[brown] (-3,1) -- (-2,1);
	\draw[brown] (-3,1) -- (-3,4);
    \draw[brown] (-3,4) -- (-1.5,5.5);
    \draw[brown] (-1.5,5.5) -- (-1,5.75);
     \draw[brown] (-1,5.75) -- (0,5.75);
    \draw[brown] (0,5.75) -- (0,3.5);
    \draw[brown] (-0.5,2.5) -- (0,3.5);
   
       \draw[dashed, black!60] (-3,5.75) --(-1,5.75);
       \draw[dashed, black!60] (-3,5.5) --(-1.5,5.5);
        \draw[dashed, black!60] (-3,3.5) --(0,3.5);
       \draw[dashed, black!60] (-3,2.5) --(-0.5,2.5);

  \node at (-4.5,5.4) {$\mu+1-c_1-c_2$};
   \node at (-4.1,5.8) {$\mu+1-c_1$};
 \node at (-3.4,4) {$\mu$};
  \node at (-3.7,3.5) {$1+c_4$};
   \node at (-3.7,2.5) {$1-c_4$};
    \node at (-3.4,1) {$c_3$};
    \node[blue] at (-0.1,6) {$E_1-E_{2}$};
    \node[blue] at (-1.5,5.8) {$E_2$};
    \node[blue] at (-3.7,1.8) {$B-E_3$};
     \node[blue]at (1.4,4.3) {$B-E_1-E_4$};
     \node[blue] at (-2.5,0.7) {$E_3$};
     \node[blue] at (0.3,1.8) {$F- E_3-E_4$};
     \node[blue] at (-3,4.9) {$ F-E_2$};
     \node[blue] at (0.1, 3) {$E_4$};

\end{tikzpicture}
\end{minipage}%
\caption{Toric action $(x_1,y_1)$ and its transformation by the $GL(2, \Z)$ matrix  \eqref{matrix}}
 \label{auxiliarynef}
 \end{figure}

Note that in  Figure \ref{auxiliarynef} we have $c_1(A) \geq 0$ for all homology classes $A$ of the fixed spheres corresponding to edges of the polygon so we can apply Theorem \ref{Seidelelements} to compute the 
Seidel element of $s_{1,4}=x_1+y_1$. Moreover, it follows from equation \eqref{eqnon-nef} that the Seidel element of $z_{1,4}$ is given by 
\begin{equation}\label{Seidelelementz14}
\mathcal S(z_{1,4})= \mathcal S(t_{1,4}) \mathcal S(s_{1,4})^{-1}
\end{equation}
More precisely, Theorem \ref{Seidelelements}(3a) yields  the Seidel elements of $t_{1,4}$ and $-s_{1,4}$ which are readable directly from the polygons on the right in Figures \ref{nef}  and  \ref{auxiliarynef}.
\begin{align*}
& \mathcal S(t_{1,4})  = E_2 \otimes q t^{\mu +1 -c_1-c_2 -\gamma} - (E_1-E_2) \otimes q \frac{t^{\mu + 1-2c_1-\gamma}}{1-t^{c_2-c_1}}\\
& \mathcal S(s_{1,4})^{-1}  = E_3 \otimes q t^{\beta-c_3} - (F-E_3-E_4)  \otimes q \frac{t^{\beta +c_4-1}}{1-t^{c_3+c_4-1}}
\end{align*}
where the exponents of the higher degree terms, namely of $E_2$ and $E_3$,  are the maximal values of the momentum map for the normalized polygons. Therefore one can check that 
$$ \gamma = \frac{-1+ 3c_1^2 -3c_1^3 +3c_2^2 -3c_1c_2^2 -2c_2^3 +c_3^3 +c_4^3 +3c_1^2\mu+3c_2^2\mu -3\mu^2}{3(c_1^2+c_2^2+c_3^2+c_4^2-2\mu)}$$
and 
$$ \beta=\frac{3c_1^2 -2c_1^3 +3c_2^2 -3c_1c_2^2 -c_2^3 +2c_3^3 +3c_4^2-3\mu +3c_1^2\mu+3c_2^2\mu -3\mu^2}{3(c_1^2+c_2^2+c_3^2+c_4^2-2\mu)}.$$
Using \cite[Proposition 5.3]{CM} we can now compute the quantum product \eqref{Seidelelementz14} (we leave the details to the interested reader since it is a long and boring computation) and finally obtain 
\begin{equation*}
\mathcal S(z_{1,4}) =((B+F -E_1- E_2-E_3)\otimes q +  \mathbb{1}\otimes t^{c_4 - \mu})t^{\beta- \gamma},
\end{equation*}
where the identity $ \mathbb{1}$ is the homology class of the manifold, $[\bbcp^2\#\, 5\overline{ \bbcp^2}] \in H_4(\bbcp^2\#\, 5\overline{ \bbcp^2}, \Z)$. In what follows we will suppress the identity from the expressions in order to simplify the notation. 

If $c_i=1/2$ for all $i$ then 
\begin{equation}\label{finalSeidelelement}
\mathcal S(z_{1,4}) = ((B+F -E_1- E_2-E_3)\otimes q +  t^{\frac12 - \mu})t^{\frac{2-3\mu}{3(1-2\mu)}}. 
\end{equation}
Note that this result agrees with McDuff-Tolman's result as the leading term is given by the homology class of the edge where the action is maximal. 

\begin{remark}
In a similar way we can compute the Seidel element of the inverse of $z_{1,4}$:
\begin{equation}\label{inversefinalSeidelelement}
\begin{split}
\mathcal S(z_{1,4})^{-1} = [(B-F -E_4)\otimes q + [pt] \otimes q^2 \, t^{\frac32 - \mu}  + (3F -E_1-E_2-E_3 +E_4)\otimes q\,  t^{1 - \mu} \\  + (B-E_4)\otimes q \,  t^{2 - 2\mu} + t^{\frac12 - \mu}(1 +  t^{2 - 2\mu})] \frac{t^{\frac{1-3\mu}{3(1-2\mu)}}}{(1-t^{1 - \mu})^4}.
\end{split}
\end{equation}
Since all the possible Hamiltonian circle actions on $\Muhalf$ have semifree maximal sets, McDuff-Tolman result always applies. Observe that when $\mu > \frac32$, the leading term is the homology class $B+F -E_4$, corresponding to the homology class of the edge where the moment map is maximal, in accordance with McDuff-Tolman formula. Otherwise, if  $1 < \mu \leq \frac32$, the leading term is given by the class of a point, showing that the quantum homology class \eqref{inversefinalSeidelelement} is not the image of an effective Hamiltonian action under the Seidel homomorphism.
\end{remark}

\subsubsection{Injectivity of the Seidel morphism for \texorpdfstring{$\mu>3/2$}{mu3/2}}

\begin{proposition}\label{free-subgroup-of-rank-5}
Consider the circle actions $z_{0,1i}, i=2,3,4$, $z_1$ and $z_{1,4}$, defined in Figure \ref{graphMA}.  If $\mu > \frac32$ and $c_1=c_2=c_3=c_4=1/2$ then the Seidel elements of these 5 circle actions generate a free subgroup of rank 5 in the group of invertible elements of the quantum homology.
\end{proposition}
\begin{proof}
Using the notation of Section \ref{QuantumHomology} for the quantum homology ring, the Seidel elements of these 5 circle actions are given by the following expressions: 
\begin{gather}\label{Seidel_elements}
\mathcal S (z_{0,12}) = b_{34}, \quad  \mathcal S  (z_{0,13}) = b_{24},  \quad \mathcal S  (z_{0,14}) = b_{23}, \\
\mathcal S (z_1) = b_{12} + f_{34}, \quad 
\mathcal S (z_{1,4}) = \left( b_{12} + f_{34}+e_4 + \frac{t^{1-\mu}}{1-t^{1-\mu}}\right) (1-t^{1-\mu})t^{\frac{1}{6(1-2\mu)}}.
\end{gather}
In order to show they are linearly independent we first consider a simplification of the quantum algebra, namely, we set the $b_{ij},$ for all $i,j$, equal to an element $b$, the $f_{ij}$ to $f$, and $e_j$ to $e$. Then the quantum homology algebra becomes isomorphic to the $\Pi^{\rm univ}$-algebra
\begin{equation}\label{simplified_ring}
\Pi^{\rm univ} [f,b,e]/ I'
\end{equation}
where $I'$ is the ideal generated by 
\begin{gather*}
(1) \ f^2 = 0, \qquad (2) \ b^2 =1, \qquad (3) \ f(b+1)=0,   \\
(4) \ f \left( e + \frac{t^{1-\mu}}{1-t^{1-\mu}} \right) = 0, \qquad 
 (5) \ b \left( e + \frac{t^{1-\mu}}{1-t^{1-\mu}} \right) = f+ e + \frac{t^{1-\mu}}{1-t^{1-\mu}}, \\
(6) \ e^2 = 2(b+f+e) \frac{t^{1-\mu}}{1-t^{1-\mu}} + 2\frac{t^{1-\mu}}{(1-t^{1-\mu})^2} + \frac{t^{2-2\mu}}{(1-t^{1-\mu})^2}.
\end{gather*}

Moreover, it is clear that the Seidel elements simplify to 
\begin{gather*}
\mathcal S(z_{0,12}) =  \mathcal S  (z_{0,13}) =  \mathcal S  (z_{0,14}) = b, \quad \mathcal S(z_1) = b +f, \\
\mbox{and} \quad \mathcal S(z_{1,4}) = \left( b + f + e + \frac{t^{1-\mu}}{1-t^{1-\mu}}\right)(1-t^{1-\mu})\, t^{\frac{1}{6(1-2\mu)}}.
\end{gather*}
We postpone the proof of the next two lemmas to Appendix \ref{QHcomp}.
\begin{lemma}\label{li_simple_ring}
Consider the elements $b$, $b+f$ and $\left( b + f + e + \frac{t^{1-\mu}}{1-t^{1-\mu}}\right)(1-t^{1-\mu})\, t^{\frac{1}{6(1-2\mu)}}$ contained in the subgroup of invertible elements of the algebra $ \Pi^{\rm univ} [f,b,e]/ I'$. They are linearly independent, that is, if 
$$ b^\alpha \,  (b+f)^\beta \, \left( b + f + e + \frac{t^{1-\mu}}{1-t^{1-\mu}}\right)^\gamma(1-t^{1-\mu})^\gamma \, t^{\frac{\gamma}{6(1-2\mu)}}=1, \quad \mbox{where} \quad \alpha, \beta, \gamma \in \Z$$
then $\alpha=\beta=\gamma=0$.
\end{lemma}
\begin{lemma}\label{li_bs}
The Seidel elements $b_{34}$, $b_{24}$ and $b_{23}$ are linearly independent in the subgroup of invertible elements of the quantum homology $\QH_4^\times(\Muhalf)$. 
\end{lemma}

Now putting together the two lemmas it is easy to conclude that the five Seidel elements $\mathcal S (z_{0,12}),$ $\mathcal S (z_{0,13}),$  $\mathcal S (z_{0,14}), \mathcal S (z_1)$ and $ \mathcal S (z_{1,4})$ are linearly independent. If 

\begin{gather*}
\mathcal S (z_{0,12})^{\alpha_1}\, \mathcal S (z_{0,13})^{\alpha_2}\, \mathcal S (z_{0,14})^{\alpha_3}\, \mathcal S (z_1)^\beta\, \mathcal S (z_{1,4})^\gamma = 1
\end{gather*}
then 
\begin{gather*}
b_{34}^{\alpha_1}\ b_{24}^{\alpha_2}\ b_{23}^{\alpha_3}\ (b_{12} + f_{34})^\beta\ \left( b_{12} + f_{34}+e_4 + \frac{t^{1-\mu}}{1-t^{1-\mu}}\right)^\gamma(1-t^{1-\mu})^\gamma\, t^{\frac{\gamma}{6(1-2\mu)}}=1. 
\end{gather*}
Using the simplification above of the quantum homology algebra, one obtains 
$$ b ^{\alpha_1+\alpha_2+\alpha_3}\, (b+f)^\beta \, \left( b + f + e + \frac{t^{1-\mu}}{1-t^{1-\mu}}\right)^\gamma(1-t^{1-\mu})^\gamma \, t^{\frac{\gamma}{6(1-2\mu)}}=1.$$ 
From Lemma \ref{li_simple_ring} we conclude that $\alpha_1+\alpha_2+\alpha_3=\beta=\gamma=0$. Therefore 
$$ b_{34}^{\alpha_1}\ b_{24}^{\alpha_2}\ b_{23}^{\alpha_3}=1,$$
and Lemma \ref{li_bs} implies that $\alpha_1=\alpha_2=\alpha_3=0$, so the five Seidel elements are linearly independent. 
\end{proof}

Next we study the action of $\pi_0(\Symp_h(M,\om))$ on $\pi_1(\Symp_0(M,\om))$.

\begin{proposition}
If $(M,\om)$ belongs to the edge $MA$ of the reduced symplectic cone, and if $\mu>3/2$, then the Seidel homomorphism is injective and the action of $\pi_0(\Symp_h(M,\om))$ on $\pi_1(\Symp_0(M,\om))$ is trivial.
\end{proposition}
\begin{proof}
The Seidel homomorphism factors through 
\[
\begin{tikzcd}
\pi_1(\Symp_0(M,\om))\arrow{r}{\Ss} \arrow[swap]{dr}{\pi} & \QH_{2n}(M,\om)^{\times}  \\
     & \pi_1(\Symp_0(M,\om))\slash\pi_0(\Symp_h(M,\om))\arrow{u}{}
\end{tikzcd}
\]
Consider arbitrary lifts in $\pi_1(\Symp_0(M,\om))\simeq\Z^5$ of the 5 actions. By Proposition~\ref{free-subgroup-of-rank-5}, these lifts generate a free subgroup of rank 5 in $\pi_1(\Symp_0(M,\om))\simeq\Z^5$ on which the Seidel homomorphism $\Ss$ is injective. Consequently, $\Ss$ is injective on the whole group $\pi_1(\Symp_0(M,\om))$. Since $\pi_1(\Symp_0(M,\om))\slash\pi_0(\Symp_h(M,\om))$ is also of rank $5$, the triviality of the action follows readily. 
\end{proof}

\begin{corollary}\label{graph=homotopyclass}
To each labelled extended graph in Figures~\ref{firstcase}, \ref{secondcase}, and~\ref{graphMA} corresponds a well-defined homotopy class in $\pi_1(\Symp_0(M,\om))$.
\end{corollary}

We now prove Theorem \ref{main}. 
\begin{proof}
Consider the extended labelled graph $z_{0,1i}$, $i=2,3,4$, $z_1$ and $z_{1,4}$ of Figure~\ref{graphMA}.  It follows from Corollary~\ref{graph=homotopyclass} that these graphs define five elements of $\pi_1 (\Symp_h(\Muhalf))$. By Proposition~\ref{free-subgroup-of-rank-5}, these five elements are linearly independent. This proves the second statement of the theorem. The first statement follows immediately from Remark \ref{RemarkNumberCircleActions}. 
\end{proof}

Note that the proof of Proposition \ref{free-subgroup-of-rank-5} does not depend on the value of $\mu$, besides the condition $\mu >1$. It follows that the 5 quantum homology classes $b_{34}$, $b_{24}$, $b_{23}$, $b_{12}+f_{34}$ and 
\begin{equation} \label{quantumrepresentative}
 \left( b_{12} + f_{34}+e_4 + \frac{t^{1-\mu}}{1-t^{1-\mu}}\right)(1-t^{1-\mu})\, t^{\frac{1}{6(1-2\mu)}}   
\end{equation}
generate a free subgroup of rank 5 in the group of invertible elements of the quantum homology, for every $\mu >1$. This observation proves the following result. 
\begin{proposition}\label{propquantumrepresentative}
 When $ 1 < \mu \leq \frac32$, the quantum class \eqref{quantumrepresentative} is not contained in the $\Q$-subspace spanned by circle actions inside the $\Q$-vector space formed by Seidel elements. 
\end{proposition}

\subsection{Relations between Hamiltonian circle actions on  \texorpdfstring{$\Mucccc$}{Mucccc}} 
In this section we prove our second main result. The main step is to obtain a classification of all circle actions in $\Symp_h(\Muhalf)$, which also shows that although we have more and more circle actions on $\Muhalf$ as we increase the value of $\mu$, they do not give new generators in the fundamental group of the symplectomorphism group $\Symp_h(\Muhalf)$. We show this by describing relations between the loops $z_k$, $z_{k,i}$, $z_{k,ij}$, $z_{k,ijl}$ and  $z_{k,1234}$, described in Section \ref{Extendedgraphs}, that come from embedding pairs of loops inside torus actions. The tools we use are Delzant's classification of toric actions and Karshon's classification of Hamiltonian circle actions. We always consider the actions in the generic case, that is, when $0 < c_4 < c_3 < c_2 < c_1 <  c_i+c_j < 1 < \mu$, with $i,j \in \{1,2,3,4\}$, in order to obtain the relations between the loops. As explained in Section~\ref{DeformationSeidel}, these relations between actions induce relations between Seidel elements in the quantum ring associated to the generic symplectic form. These relations map to similar relations in the quantum homology ring of $\Muhalf$. By injectivity of the Seidel homomorphism when $\mu>3/2$, we deduce that these relations hold in $\pi_1(\Symp(\Muhalf))$ as well.

Consider the manifold $\Mucccc$ endowed with a toric action, which we denote by $T_k$, such that the momentum polygon is given in Figure \ref{ToricTk}. Besides the homology classes indicated in the figure it should be clear that the classes $E_3-E_4$ and $E_4$ are also represented in the bottom edges. Projecting onto the $x$ and $y$-axis we obtain  the graphs in Figure \ref{GraphsTk} of the actions $z_k$ and $w_k$, respectively, whose momentum maps are the first and second coordinates of the momentum map of the action $T_k$. 

\begin{figure}[thp]

\begin{tikzpicture}[scale=0.7, roundnode/.style={circle, draw=black!80, thick, minimum size=7mm}, font=\footnotesize]
 
    \draw[->][black!70] (0,0) -- (5,0); 
    \draw[->][black!70] (0,-4.2) -- (0,10.2); 

 \draw[brown] (3.5,-3.625) -- (4, -3.6875);
  \draw[brown]  (3.5,-3.625) -- (3, -3.5);
       \draw[brown]  (2,-3) -- (3, -3.5);
	\draw[brown]  (1,-2) -- (2, -3);
	\draw[brown]  (0,0) -- (1,-2);
	\draw[brown]  (0,0) -- (0,4);
	\draw[brown]  (0,4) -- (4,10);
    \draw[brown]  (4,10) -- (4,-3.6875);
 
          \draw[dashed, black!60] (3.5,-3.625) --(3.5,0);   
            \draw[dashed, black!60] (3,-3.5) --(3,0);
            \draw[dashed, black!60] (2,-3) --(2,0);
            \draw[dashed, black!60] (1,-2) --(1,0);
        \draw[dashed, black!60] (0,10) --(4,10);
        \draw[dashed, black!60] (0,-2) --(1,-2);
        \draw[dashed, black!60] (0,-3) --(2,-3);
        \draw[dashed, black!60] (0,-3.5) --(3,-3.5);
        \draw[dashed, black!60] (0,-3.625) --(3.5,-3.625);
        \draw[dashed, black!60] (0,-3.6875) --(4,-3.685);	
           \draw[dashed, black!60] (0,10) --(4,10);
            \draw[dashed, black!60] (0,10) --(4,10);
              \draw[dashed, black!60] (0,10) --(4,10);
               \draw[dashed, black!60] (0,10) --(4,10);
	
    \node[blue] at (1,6) {$F$};
    \node[blue] at (0.6,-2.5) {$E_1-E_2$};
    \node[blue] at (-0.8,2) {$B-kF$};
     \node[blue] at (7,2) {$B+kF -E_1-E_2-E_3-E_4$};
     \node[blue] at (0,-1.35) {$F-E_1$};
     \node[blue] at (1.4,-3.25) {$E_2-E_3$};
  \node at (-0.3,10) {$\mu$};
 \node at (-0.6,4) {$\mu-k$};
 \node at (-0.3,0) {$0$};
  \node at (-1.1,-2) {$-k(1-c_1)$};
  \node at (-2,-3) {$-k(1-c_2) + c_1-c_2$};
    \node at (-2.6,-3.5) {$-k(1-c_3) + c_1+c_2-2c_3$};
        \node at (-0.3,-3.8) {$\vdots$};
     \node[rotate=45] at (1.3,0.55) {$1-c_1$};
      \node[rotate=45] at (2.3,0.55) {$1-c_2$}; 
        \node[rotate=45] at (3.3,0.55) {$1-c_3$}; 
          \node[rotate=45] at (3.8,0.55) {$1-c_4$}; 
         \node at (4.2,0.2) {$1$};             
\end{tikzpicture}

\caption{Toric action $T_k$}
 \label{ToricTk}
 \end{figure}

\begin{figure}[thp]
\begin{minipage}{.48\textwidth}
\begin{tikzpicture}[scale=1.1,font=\footnotesize]
\node at (2.5,1.7 ) {$B +kF  -E_1 - E_2 -E_3 - E_4 $};
       \fill[black] (2,1) ellipse (1 and 0.2);
	\node at (5.3,1) {$1,\mu +k -c_1-c_2-c_3-c_4 $};
	\filldraw [black] (2,0.5) circle (2pt);
        \draw[dashed, black!60] (2,1) --(2,0.5); 
	\node at (1.7,0.6) {$ E_4$};
	 \draw[dashed, black!60] (2,1) --(2,-2); 
	\node at (1.7,-1.3) {$ F-E_4$};
	\filldraw [black] (2.2,0.1) circle (2pt);
	 \draw[dashed, black!60] (2.2,0.8) --(2.2,0.1); 
	\node at (2.4,0.4) {$ E_3$};
	 \draw[dashed, black!60] (2.2,0.8) --(2.2,-2); 
	\node at (2.7,-0.4) {$ F-E_3$};
	\node at (4,0.5) {$ 1-c_4$};
	\node at (4,0.1) {$ 1-c_3$};
	\node at (4,-0.6) {$ 1-c_2$};
	\node at (4,-1.4) {$ 1-c_1$};
	\filldraw [black] (1.4,-0.6) circle (2pt);
	 \draw[dashed, black!60] (1.4,-0.6) --(1.4,1); 
	\node at (1.1,0) {$ E_2$};
	 \draw[dashed, black!60] (1.4,-0.6) --(1.4,-2); 
	\node at (0.8,-1) {$ F-E_2$};
	\filldraw [black] (2.7,-1.4) circle (2pt);
	 \draw[dashed, black!60] (2.7,-1.4) --(2.7,1); 
	\node at (3,-1) {$ E_1$};
	\draw[dashed, black!60] (2.7,-1.4) --(2.7,-2); 
	\node at (3.3,-1.7) {$ F-E_1$};
        \fill[black] (2,-2) ellipse (1 and 0.2);
	\node at (4.1,-2) {$0,\mu - k$};
	 \node at (2,-2.7) {$B-kF$};
       \node at (2,-3.4) {Circle action $z_k$};
\end{tikzpicture}
\end{minipage}%
\begin{minipage}{.48\textwidth}
\quad
\begin{tikzpicture}[scale=0.9,font=\footnotesize]
	\filldraw [black] (2,3.4) circle (2pt);
	\filldraw [black] (2,2.4) circle (2pt);
	\filldraw [black] (2,1) circle (2pt);
	\filldraw [black] (2,0) circle (2pt);
	\filldraw [black] (2,-0.7) circle (2pt);
	\filldraw [black] (2,-1.3) circle (2pt);
	\filldraw [black] (2, -1.8) circle (2pt);
	\filldraw [black] (2,-2.2) circle (2pt);
	
	\draw (2,3.4) -- (2,2.4);
	\draw (2,1) -- (2,-2.2);
	
 \node at (1.7,2.9) {$k$};
  \node at (1.6,0.5) {$-k$};
   \node at (1.4,-0.35) {$-k+1$};
  \node at (1.4,-1.05) {$-k+2$};
   \node at (1.4,-1.55) {$-k+3$};
    \node at (1.4,-2) {$-k+4$};
	 \node at (3.1,3.4) {$\mu$};
	 \node at (3.3,2.4) {$\mu -k$};
	 \node at (3.1,1) {$0$};
	 \node at (3.7,0) {$-k(1-c_1)$};
	 \node at (4.4,-0.7) {$-k(1-c_2) +c_1-c_2$};
	 \node at (4.8,-1.3) {$-k(1-c_3) +c_1+c_2-2c_3$};
     \node at (5.2,-1.8) {$-k(1-c_4) +c_1+c_2+c_3-3c_4$};
      \node at (4.6,-2.2) {$-k +c_1+c_2+c_3+c_4$};
       \node at (2,-2.7) {Circle action $w_{k}$};
\end{tikzpicture}

\end{minipage}%
\caption{Graphs of the circle actions  $z_k$ and $w_k$, respectively}
\label{GraphsTk}
\end{figure}

Performing  the $GL(2, \Z)$ transformation represented by the matrix 
$$ \left (
\begin{array}{cc}
1  & 0 \\
j  & -1 
\end{array} \right)$$ to the polygon of Figure \ref{ToricTk} yields a new polygon representing the same toric manifold. This new polygon has  vertices 
\begin{align*}
& (1, j- \mu), \ (0, k-\mu), \ (0,0), \ (1-c_1, (j+k)(1-c_1)),  \ (1-c_2, (j+k)(1-c_2)-c_1+c_2), \\
& (1-c_3, (j+k)(1-c_3)-c_1-c_2+2c_3),  \ (1-c_4, (j+k)(1-c_4)-c_1-c_2-c_3+3c_4), \ \mbox{and} \\
& (1, j+k-c_1-c_2-c_3-c_4).
\end{align*}
Then perform the $GL(2, \Z)$ transformation represented by the matrix 
$$ \left (
\begin{array}{cc}
1  & 0 \\
k & -1 
\end{array} \right)$$ to the polygon of the toric action $T_j$. It should be clear, looking at the coordinates of the two transformed polygons, that 
 projecting both polygons onto the $y$-axis we obtain the same graph, which implies that 
\begin{equation}\label{firstequation}
jz_k -w_k =kz_j-w_j \quad j,k \geq 1.
\end{equation}

Now consider the toric action $T_0$ on $\Mucccc$ represented in the Delzant polygon of Figure \ref{toricT0}. Consider also  its projections, in Figure \ref{GraphsT0}, to the $x$ and $y$-axis  representing circle actions that we denote by $(z_0, y_0)$. 

\begin{figure}[thp]
\begin{center}
\begin{tikzpicture}[scale=0.7, roundnode/.style={circle, draw=black!80, thick, minimum size=7mm}, font=\footnotesize]
 
    \draw[->][black!70] (-0.2,0) -- (4.6,0); 
    \draw[->][black!70] (0,-0.2) -- (0,5.7); 

    \draw[brown] (0,0) -- (2,0);
    \draw[brown]  (0,0) -- (0,5);
    \draw[brown]  (0,5) -- (4,5);
    \draw[brown]  (2,0) -- (3,1);
     \draw[brown]  (3,1) -- (3.5,2);
    \draw[brown]  (3.5,2) -- (3.75,3);
    \draw[brown]  (3.75,3) -- (4,4);
    \draw[brown]  (4,4) -- (4,5);
   
   \draw[dashed, black!60] (4,4) --(4,0);
	\draw[dashed, black!60] (0,1) -- (3,1);
	\draw[dashed, black!60] (0,4) -- (4,4);
        \draw[dashed, black!60] (0,3) -- (3.75,3);
        	\draw[dashed, black!60] (0,2) -- (3.5,2);	

  \node at (-0.3,5) {$\mu$};
    \node at (-0.8,1) {$c_1 -c_2$};
    \node at (-1.3,2) {$c_1+c_2 - 2c_3$};
     \node at (-1.8,3) {$ c_1+c_2 +c_3 -3c_4$};
    \node at (-1.7,4) {$c_1+c_2 +c_3 +c_4 $};
    \node at (4,-0.3) {$1 $};
   \node[blue] at (4.3, 3.5) {$ E_4 $};
   \node[blue]  at (4.6,2.5) {$ E_3-E_4 $};
   \node[blue]  at (4.2,1.5) {$ E_2-E_3  $};
   \node[blue]  at (3.6,0.5) {$ E_1-E_2  $};

\end{tikzpicture}
\end{center}
\caption{Toric action $T_0$}
 \label{toricT0}
 \end{figure}

\begin{figure}[thp]
\begin{minipage}{.48\textwidth}
\begin{tikzpicture}[scale=1.1,font=\footnotesize]
\node at (2,1.7 ) {$B -E_1 - E_2 -E_3 - E_4 $};
       \fill[black] (2,1) ellipse (1 and 0.2);
	\node at (5,1) {$1,\mu -c_1-c_2-c_3-c_4 $};
	\filldraw [black] (2,0.5) circle (2pt);
        \draw[dashed, black!60] (2,1) --(2,0.5); 
	\node at (1.7,0.6) {$ E_4$};
	 \draw[dashed, black!60] (2,1) --(2,-2); 
	\node at (1.7,-1.3) {$ F-E_4$};
	\filldraw [black] (2.2,0.1) circle (2pt);
	 \draw[dashed, black!60] (2.2,0.8) --(2.2,0.1); 
	\node at (2.4,0.4) {$ E_3$};
	 \draw[dashed, black!60] (2.2,0.8) --(2.2,-2); 
	\node at (2.7,-0.4) {$ F-E_3$};
	\node at (4,0.5) {$ 1-c_4$};
	\node at (4,0.1) {$ 1-c_3$};
	\node at (4,-0.6) {$ 1-c_2$};
	\node at (4,-1.4) {$ 1-c_1$};
	\filldraw [black] (1.4,-0.6) circle (2pt);
	 \draw[dashed, black!60] (1.4,-0.6) --(1.4,1); 
	\node at (1.1,0) {$ E_2$};
	 \draw[dashed, black!60] (1.4,-0.6) --(1.4,-2); 
	\node at (0.8,-1) {$ F-E_2$};
	\filldraw [black] (2.7,-1.4) circle (2pt);
	 \draw[dashed, black!60] (2.7,-1.4) --(2.7,1); 
	\node at (2.9,-1) {$ E_1$};
	\draw[dashed, black!60] (2.7,-1.4) --(2.7,-2); 
	\node at (3.2,-1.7) {$ F-E_1$};
        \fill[black] (2,-2) ellipse (1 and 0.2);
	\node at (3.8,-2) {$0,\mu $};
	 \node at (2,-2.7) {$B$};
       \node at (2,-3.4) {Circle action $z_0$};
\end{tikzpicture}
\end{minipage}%
\begin{minipage}{.48\textwidth}
\quad \quad 
\begin{tikzpicture}[scale=1.1,font=\footnotesize]
\node at (2,2.4 ) {$F $};
       \fill[black] (2,1.8) ellipse (1 and 0.2);
	\node at (3.7,1.8) {$\displaystyle{1,\mu}$};
	\filldraw [black] (2,0.4) circle (2pt);
	\filldraw [black] (2,1) circle (2pt);
	 \draw[dashed, black!60] (1.4,1.8) --(1.4,-1.5); 
	  \node at (1.2,0.3) {$ B$};
	 \draw[dashed, black!60] (2,1) --(2,1.8); 
	  \node at (3.5,1.4) {$ B-E_1 -E_2 -E_3 -E_4$};
	\node at (4.7 ,1) {$c_1+c_2 +c_3 +c_4 $};
        \node at (4.8 ,0.4) {$c_1+c_2 +c_3 -3c_4$};
	\filldraw [black] (2,-0.2) circle (2pt);
	\filldraw [black] (2,-0.8) circle (2pt);
	\node at (4.5 ,-0.2) {$c_1+c_2 - 2c_3$};
        \node at (4.1 ,-0.8) {$c_1 -c_2$};
         \draw[dashed, black!60] (2,-1.5) --(2,-0.8); 
         \node at (2.7,-1.1) {$ E_1-E_2$};
          \node at (2.7,-0.5) {$ E_2-E_3$};
            \node at (2.7,0.1) {$ E_3-E_4$};
              \node at (2.3,0.7) {$ E_4$};
        \fill[black] (2,-1.5) ellipse (1 and 0.2);
	\node at (4.2,-1.5) {$\displaystyle{0,1-c_1}$};
	 \node at (2,-2) {$F-E_1$};
	  \draw (2,-0.8) -- (2,1);
	   \node at (1.7,0.7) {$4$};
	    \node at (1.7,0.1) {$3$};
	     \node at (1.7,-0.5) {$2$};
      \node at (2,-2.7) {Circle action $y_0$};
\end{tikzpicture}
\end{minipage}%

\caption{Graphs of circle actions $z_0$ and $y_0$, respectively}
 \label{GraphsT0}
 \end{figure}

Again, performing  the $GL(2, \Z)$ transformation represented by the matrix 
$$ \left (
\begin{array}{cc}
1  & 0 \\
-k & 1 
\end{array} \right)$$ to the polygon of Figure \ref{toricT0} yields a new polygon representing the same toric manifold. This new polygon has  vertices 
\begin{align*}
& (0, \mu),  (1, \mu-k),   (0,0),  (1-c_1, -k(1-c_1)),   (1-c_2, -k(1-c_2)+c_1-c_2), \\
& (1-c_3, -k(1-c_3)+c_1+c_2-2c_3),   (1-c_4, -k(1-c_4)+c_1+c_2+c_3-3c_4), \ \mbox{and} \\
& (1, -k+c_1+c_2+c_3+c_4).
\end{align*}
Therefore, projecting this new polygon onto the $y$-axis, it is easy to check that we obtain the graph of the circle action $w_k$, which means we have the following identification 
\begin{equation}\label{auxiliary1}
w_k = -kz_0+y_0, \quad k \geq 1. 
\end{equation}
Combining equations \eqref{firstequation} and \eqref{auxiliary1} yields 
$$ jz_k +kz_0= kz_j +jz_0.$$
Finally, setting $j=1$ implies that 
\begin{equation}\label{result1}
z_k=kz_1 +(1-k)z_0, \quad k \geq 0.
\end{equation}
Next, we use a similar argument in order to obtain more relations between other circle actions listed in Proposition \ref{actionslist}. For that we need to consider the toric action $T_{k,4}$ on $\Mucccc$, represented in the  momentum potytope is in Figure \ref{toricTk4}.  Note that are edges, whose homology classes are not indicated in the figure, namely  $E_2-E_3$ and $E_3$. 

\begin{figure}[thp]

\begin{tikzpicture}[scale=0.7, roundnode/.style={circle, draw=black!80, thick, minimum size=7mm}, font=\footnotesize]
 
    \draw[->][black!70] (0,0) -- (5,0); 
    \draw[->][black!70] (0,-5.9) -- (0,10.2); 
    \draw[brown] (0,0.5) -- (0.5, -1);
          \draw[brown] (3.5,-5.25) -- (4, -5.375);
       \draw[brown] (3,-5) -- (3.5, -5.25);
	\draw[brown] (2,-4) -- (3, -5);
	\draw[brown] (0.5,-1) -- (2,-4);
	\draw[brown] (0,0.5) -- (0,4);
	\draw[brown] (0,4) -- (4,10);
    \draw[brown] (4,10) -- (4,-5.375);
 
          \draw[dashed, black!60] (0,-1) --(0.5,-1);   
            \draw[dashed, black!60] (3,-5) --(3,0);
            \draw[dashed, black!60] (2,-4) --(2,0);
         \draw[dashed, black!60] (0.5,-1) --(0.5,0);
        \draw[dashed, black!60] (0,10) --(4,10);
        \draw[dashed, black!60] (0,-5.25) --(3.5,-5.25);
        \draw[dashed, black!60] (0,-4) --(2,-4);
        \draw[dashed, black!60] (0,-5) --(3,-5);
        \draw[dashed, black!60] (0,-5.375) --(4,-5.375);	
           \draw[dashed, black!60] (0,10) --(4,10);
  \draw[dashed, black!60] (3.5,0) --(3.5,-5.25);	
    \node[blue]  at (1,6) {$F$};
    \node[blue]  at (0,-2.5) {$F-E_1-E_4$};
    \node[blue]  at (-1.3,2) {$B-kF-E_4$};
    \node[blue]  at (6.4,2) {$B+kF -E_1-E_2-E_3$};
    \node[blue]  at (1.5,-4.5) {$E_1-E_2$};
    \node[blue]  at (0,-0.25) {$E_4$};

  \node at (-0.3,10) {$\mu$};
 \node at (-0.6,4) {$\mu-k$};
 \node at (-0.3,0.5) {$c_4$};
  \node at (-0.6,-1) {$-kc_4$};
  \node at (-1.1,-4) {$-k(1-c_1)$};
  \node at (-2,-4.9) {$-k(1-c_2) + c_1-c_2$};
    \node at (-0.3,-5.5) {$\vdots$};
     \node[rotate=45] at (2.3,0.55) {$1-c_1$};
      \node[rotate=45] at (3.3,0.55) {$1-c_2$}; 
        \node[rotate=45] at (3.8,0.55) {$1-c_3$}; 
     \node[rotate=45] at (0.5,0.25) {$c_4$}; 

\end{tikzpicture}

\caption{Toric action $T_{k,4}$}
 \label{toricTk4}
 \end{figure}

Projecting onto the $x$ and $y$-axis we obtain  the graphs in Figure \ref{GraphsTk4} of the actions $(z_{k,4},w_{k,4})$, respectively, whose momentum maps are the first and second coordinates of the momentum map of the action $T_{k,4}$. 

\begin{figure}[thp]
\begin{minipage}{.45\textwidth}
\begin{tikzpicture}[scale=1.1,font=\footnotesize]
\node at (2, 1.7 ) {$B +kF  -E_1 - E_1 -E_3 $};
       \fill[black] (2,1) ellipse (1 and 0.2);
	\node at (4.8,1) {$\displaystyle{1,\mu +k  -c_1-c_2 -c_3}$};
	\filldraw [black] (1.2,0.3) circle (2pt);
	\draw[dashed, black!60] (1.2,1) --(1.2,0.3); 
	\node at (0.9,0.6) {$ E_3$};
	\draw[dashed, black!60] (1.2,1) --(1.2,-2); 
	\node at (0.6,-1.5) {$ F-E_3$};	
	\filldraw [black] (2,-0.3) circle (2pt);
	\draw[dashed, black!60] (2,1) --(2,-0.3); 
	\node at (2.2,0.1) {$ E_2$};
	\draw[dashed, black!60] (2,1) --(2,-2); 
	\node at (2.3,-1.3) {$ F-E_2$};
	\filldraw [black] (2.6, -0.8) circle (2pt);
	\draw[dashed, black!60] (2.6,1) --(2.6,-0.8); 
	\node at (2.8,-0.4) {$ E_1$};
	\draw[dashed, black!60] (2.6,1) --(2.6,-2); 
	\node at (3.1,-1.7) {$ F-E_1$};
	\filldraw [black] (1.7,-1.2) circle (2pt);
	\draw[dashed, black!60] (1.7,-2) --(1.7,1); 
	\node at (1.7,-0.7) {$ F-E_4$};
	\draw[dashed, black!60] (1.7,-2) --(1.7,-1.2); 
	\node at (1.5,-1.5) {$ E_4$};
	\node at (3.9,0.3) {$ 1-c_3$};
	\node at (3.9,-0.3) {$ 1-c_2$};
	\node at (3.9,-0.8) {$ 1-c_1$};
	\node at (3.7,-1.2) {$ c_4$};
        \fill[black] (2,-2) ellipse (1 and 0.2);
	\node at (4.3,-2) {$\displaystyle{0,\mu -k - c_4}$};
	 \node at (2,-2.7) {$B-kF-E_4$};
       \node at (2,-3.4) {Circle action $z_{k,4}$};
\end{tikzpicture}
\end{minipage}%
\begin{minipage}{.45\textwidth}
\quad
\begin{tikzpicture}[scale=0.9,font=\footnotesize]
	\filldraw [black] (2,3.4) circle (2pt);
	\filldraw [black] (2,2.4) circle (2pt);
	\filldraw [black] (2,1) circle (2pt);
	\filldraw [black] (2,0.4) circle (2pt);
	\filldraw [black] (2,-0.7) circle (2pt);
	\filldraw [black] (2,-1.3) circle (2pt);
	\filldraw [black] (2, -1.8) circle (2pt);
	\filldraw [black] (2,-2.2) circle (2pt);
	
	\draw (2,3.4) -- (2,2.4);
	\draw (2,1) -- (2,-2.2);
	
 \node at (1.7,2.9) {$k$};
  \node at (1.4,0.7) {$-k-1$};
   \node at (1.6,-0.15) {$-k$};
  \node at (1.4,-1.05) {$-k+1$};
   \node at (1.4,-1.55) {$-k+2$};
    \node at (1.4,-2) {$-k+3$};
	 \node at (3.1,3.4) {$\mu$};
	 \node at (3.3,2.4) {$\mu -k$};
	 \node at (3.1,1) {$c_4$};
	 \node at (3.3,0.4) {$-kc_4$};
	 \node at (3.7,-0.7) {$-k(1-c_1)$};
	 \node at (4.4,-1.3) {$-k(1-c_2) +c_1-c_2$};
     \node at (4.8,-1.8) {$-k(1-c_3) +c_1+c_2-2c_3$};
      \node at (4.2,-2.2) {$-k +c_1+c_2+c_3$};
       \node at (2,-2.7) {Circle action $w_{k,4}$};
\end{tikzpicture}

\end{minipage}%
\caption{Graphs of the circle actions  $z_{k,4}$ and $w_{k,4}$}
\label{GraphsTk4}
\end{figure}

Using the same argument as before, that is, performing  the $GL(2, \Z)$ transformation represented by the matrix 
$$ \left (
\begin{array}{cc}
1  & 0 \\
j  & -1 
\end{array} \right)$$ to the polygon of Figure \ref{toricTk4} yields a new polygon with  vertices 
\begin{align*}
& (1, j+k-c_1-c_2-c_3),  (1, j- \mu),  (0, k-\mu),  (0,-c_4), \ (c_4, (j+k)c_4),  (1-c_1, (j+k)(1-c_1)),  \\ 
& (1-c_2, (j+k)(1-c_2)-c_1+c_2),  \ \mbox{and} \ (1-c_3, (j+k)(1-c_3)-c_1-c_2+2c_3).
\end{align*}
Interchanging the role of $k$ and $j$ and then projecting onto the $y$-axis it follows that
\begin{equation}\label{secondequation}
jz_{k,4} -w_{k,4} =kz_{j,4}-w_{j,4} \quad j,k \geq 1.
\end{equation}

Now consider the toric action on $\Mucccc$ represented in the polygon of Figure \ref{toricT04}. Consider also  its projections, in Figure \ref{GraphsT04}, to the $x$ and $y$-axis  representing circle actions that we denote by $(z_{0,4}, y_{0,4})$. 

\begin{figure}[thp]
\begin{center}
\begin{tikzpicture}[scale=0.7, roundnode/.style={circle, draw=black!80, thick, minimum size=7mm}, font=\footnotesize]
 
    \draw[->][black!70] (-0.2,0) -- (4.6,0); 
    \draw[->][black!70] (0,-0.2) -- (0,5.7); 

 \draw[brown] (0,0.5) -- (0.5,0);
    \draw[brown] (0.5,0) -- (2,0);
    \draw[brown] (0,0.5) -- (0,5);
    \draw[brown] (0,5) -- (4,5);
    \draw[brown] (2,0) -- (3,1);
     \draw[brown] (3,1) -- (3.5,2);
    \draw[brown] (3.5,2) -- (4,3.5);
    \draw[brown] (4,3.5) -- (4,5);
   
   \draw[dashed, black!60] (4,4) --(4,0);
    \draw[dashed, black!60] (0,1) -- (3,1);
    \draw[dashed, black!60] (0,3.5) -- (4,3.5);
   \draw[dashed, black!60] (0,2) -- (3.5,2);	

     \node at (-0.3,5) {$\mu$};
    \node at (-0.8,1) {$c_1 -c_2$};
    \node at (-1.3,2) {$c_1+c_2 - 2c_3$};
     \node at (-0.3,0.5) {$ c_4$};
    \node at (-1.3,3.5) {$c_1+c_2 +c_3 $};
    \node at (4,-0.3) {$1 $};
   \node[blue] at (0.6, 0.4) {$ E_4 $};T
   \node[blue] at (4.1,2.5) {$ E_3 $};
   \node[blue] at (4.2,1.5) {$ E_2-E_3  $};
    \node[blue] at (3.6,0.5) {$ E_1-E_2  $};

\end{tikzpicture}
\end{center}
\caption{Toric action $T_{0,4}$}
 \label{toricT04}
 \end{figure}

\begin{figure}[thp]
\begin{minipage}{.45\textwidth}
\begin{tikzpicture}[scale=1.1,font=\footnotesize]
\node at (2, 1.7 ) {$B  -E_1 - E_2 -E_3 $};
       \fill[black] (2,1) ellipse (1 and 0.2);
	\node at (4.6,1) {$\displaystyle{1,\mu  -c_1-c_2 -c_3}$};
	\filldraw [black] (1.2,0.3) circle (2pt);
	\draw[dashed, black!60] (1.2,1) --(1.2,0.3); 
	\node at (0.9,0.6) {$ E_3$};
	\draw[dashed, black!60] (1.2,1) --(1.2,-2); 
	\node at (0.6,-1.5) {$ F-E_3$};	
	\filldraw [black] (2,-0.3) circle (2pt);
	\draw[dashed, black!60] (2,1) --(2,-0.3); 
	\node at (2.2,0.1) {$ E_2$};
	\draw[dashed, black!60] (2,1) --(2,-2); 
	\node at (2.3,-1.3) {$ F-E_2$};
	\filldraw [black] (2.6, -0.8) circle (2pt);
	\draw[dashed, black!60] (2.6,1) --(2.6,-0.8); 
	\node at (2.8,-0.4) {$ E_1$};
	\draw[dashed, black!60] (2.6,1) --(2.6,-2); 
	\node at (3.1,-1.7) {$ F-E_1$};
	\filldraw [black] (1.7,-1.2) circle (2pt);
	\draw[dashed, black!60] (1.7,-2) --(1.7,1); 
	\node at (1.7,-0.7) {$ F-E_4$};
	\draw[dashed, black!60] (1.7,-2) --(1.7,-1.2); 
	\node at (1.5,-1.5) {$ E_4$};
	\node at (3.9,0.3) {$ 1-c_3$};
	\node at (3.9,-0.3) {$ 1-c_2$};
	\node at (3.9,-0.8) {$ 1-c_1$};
	\node at (3.7,-1.2) {$ c_4$};
        \fill[black] (2,-2) ellipse (1 and 0.2);
	\node at (4.1,-2) {$\displaystyle{0,\mu - c_4}$};
	 \node at (2,-2.7) {$B-E_4$};
       \node at (2,-3.4) {Circle action $z_{0,4}$};
\end{tikzpicture}
\end{minipage}%
\begin{minipage}{.45\textwidth}
\quad \quad 
\begin{tikzpicture}[scale=1.1,font=\footnotesize]
\node at (2,2.4 ) {$F $};
       \fill[black] (2,1.8) ellipse (1 and 0.2);
	\node at (3.8,1.8) {$\displaystyle{1,\mu}$};
	\filldraw [black] (2,0.4) circle (2pt);
	\filldraw [black] (2,1) circle (2pt);
	\draw[dashed, black!60] (2,1) --(2,2); 
	\node at (3.2,1.4) {$ B-E_1-E_2-E_3$};
	\node at (2.7,0.1) {$ E_2-E_3$};
	\node at (4.4 ,1) {$c_1+c_2 +c_3 $};
	\node at (2.3,0.7) {$ E_3$};
        \node at (4.5 ,0.4) {$c_1+c_2 - 2c_3$};
	\filldraw [black] (2,-0.2) circle (2pt);
	\draw[dashed, black!60] (2,-0.2) --(2,-1.5); 
	\node at (2.7,-0.6) {$ E_1-E_2$};
	\filldraw [black] (1.5,-0.8) circle (2pt);
	\draw[dashed, black!60] (1.5,-0.8) --(1.5,-1.5); 
	\node at (1.2,-1.1) {$ E_4$};
	\draw[dashed, black!60] (1.5,-0.8) --(1.5,2); 
	\node at (1,1.2) {$ F- E_4$};
	\node at (4.1 ,-0.2) {$c_1 -c_2$};
        \node at (3.8 ,-0.8) {$c_4$};
        \fill[black] (2,-1.5) ellipse (1 and 0.2);
	\node at (4.5,-1.5) {$\displaystyle{0,1-c_1-c_4}$};
	 \node at (2,-2) {$F-E_1$};
	  \draw (2,-0.2) -- (2,1);
	   \node at (1.7,0.7) {$3$};
	    \node at (1.7,0.1) {$2$};

       \node at (2,-2.7) {Circle action $y_{0,4}$};
\end{tikzpicture}
\end{minipage}%

\caption{Graphs of circle actions $z_{0,4}$ and $y_{0,4}$, respectively. }
 \label{GraphsT04}
 \end{figure}

In this case, performing  the $GL(2, \Z)$ transformation represented by the matrix 
$$ \left (
\begin{array}{cc}
1  & 0 \\
-k & 1 
\end{array} \right)$$ to the polygon of Figure \ref{toricT04} yields a new polygon whose vertices are
\begin{align*}
& (0, \mu),  (1, \mu-k),   (0,c_4), (c_4, -kc_4), (1-c_1, -k(1-c_1)),   (1-c_2, -k(1-c_2)+c_1-c_2), \\
& (1-c_3, -k(1-c_3)+c_1+c_2-2c_3), \ \mbox{and} \  (1, -k+c_1+c_2+c_3).
\end{align*}
Therefore, projecting this new polygon onto the $y$-axis, it is clear one obtains the graph of the circle action $w_{k,4}$, which means we have the following relation 
\begin{equation}\label{auxiliary2}
w_k = -kz_{0,4}+y_{0,4}, \quad k \geq 1. 
\end{equation}
Combining equations \eqref{secondequation} and \eqref{auxiliary2} yields
$$ jz_{k,4} +kz_{0,4}= kz_{j,4} +jz_{0,4},$$
and, setting $j=1$ implies that 
\begin{equation*}
z_{k,4}=kz_{1,4} +(1-k)z_{0,4}, \quad k \geq 0.
\end{equation*}
Therefore, it is clear that in fact we have the following identifications
\begin{equation}\label{result2}
z_{k,i}=kz_{1,i} +(1-k)z_{0,i}, \quad k \geq 0,\quad  i=1,2,3,4. 
\end{equation}

Using a similar argument three more times applied to the appropriate toric actions on the manifold $\Mucccc$ it follows that the following identifications hold:
\begin{equation}\label{result3}
z_{k,ij}=kz_{1,ij} +(1-k)z_{0,ij}, \quad k \geq 0,\quad  i,j=1,2,3,4, 
\end{equation}
\begin{equation}\label{result4}
z_{k,ij\ell}=kz_{1,ij\ell} +(k-1)z_{0,m}, \quad  k \geq 0,\quad  i,j,\ell, m=1,2,3,4 \quad \mbox{and all indices are distinct};
\end{equation}
and
\begin{equation}\label{result5}
z_{k,1234}=kz_{1,1234} +(k-1)z_{0}, \quad  k \geq 0.
\end{equation}

In what follows we will often use the next proposition. The proof follows from techniques similar to the ones used to obtain the previous relations so we postpone it to 
the appendix. 
\begin{proposition}\label{basicrelations}
Consider the circle actions  $z_0$, $z_{0,i} \ i=1,2,3,4$,  $z_{0,1i}, i=2,3,4$, $z_1$ and $z_{1,4}$, defined in Figures \ref{graphMA} or \ref{graphsgeneric}  through their graphs, as elements of 
$\pi_1 (\Symp_h(\Mucccc))$. Then the following identifications hold
\begin{align*}
z_{0,1}&= z_1 -z_{1,4} + z_{0,14}&  z_{0,2}&= z_1 -z_{1,4} -z_{0,13} \\
z_{0,3}&= z_1 -z_{1,4} - z_{0,12}&  z_{0,4}&= z_1 -z_{1,4} - z_{0,12}-z_{0,13}+ z_{0,14}
\end{align*}
and  $z_{0}= 2z_1 -2z_{1,4} - z_{0,12}-z_{0,13}+ z_{0,14}$. 
\end{proposition}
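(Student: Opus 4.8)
The plan is to establish each of the five relations by precisely the device used to derive \eqref{firstequation}--\eqref{result5}. A four-dimensional toric structure on $\Mucccc$ gives an inclusion $\TT^2\hookrightarrow\Symp_h(\Mucccc)$, hence a group homomorphism $\Z^2=\pi_1(\TT^2)\to\pi_1(\Symp_h(\Mucccc))$ sending a lattice vector $(a,b)$ to $a\,u+b\,v$, where $u$ and $v$ are the circle actions recovered by projecting the momentum polytope onto the two coordinate axes. Since a $GL(2,\Z)$ transformation sends a Delzant polytope to a Delzant polytope for the \emph{same} toric manifold, projecting a sheared polytope $g\cdot P$ onto a coordinate axis exhibits a prescribed integral combination $a\,u+b\,v$ as the decorated graph of another circle action from Proposition \ref{actionslist}. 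Whenever two such projected graphs agree -- up to vertical translation and with the \emph{same} homology classes on the fat vertices -- the corresponding loops represent the same class in $\pi_1$, and one reads off a linear relation.

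Concretely, one works first in the generic chamber $0<c_4<c_3<c_2<c_1$, $c_i+c_j<1<\mu$. For each of the four identities for $z_{0,i}$, $i=1,2,3,4$, one exhibits a pair of toric actions on $\Mucccc$: one whose coordinate projections are $z_1$ and an auxiliary circle action $w$, and another whose projections are $z_{0,i}$ and a second auxiliary action. Shearing the first polytope by matrices of the form $\bigl(\begin{smallmatrix}1&0\\ \ast&\pm 1\end{smallmatrix}\bigr)$ identifies $w$ with a $\Z$-combination of $z_{1,4}$ and the relevant $z_{0,1j}$ -- exactly as $w_k$ was identified in \eqref{auxiliary1} and $w_{k,4}$ in \eqref{auxiliary2} -- and matching the $y$-projection of the two sheared polytopes yields the stated identity. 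The relation for $z_0$ is obtained in the same way, from a toric action relating $z_0$ to $z_{0,2}$, $z_{0,3}$ and $z_{0,14}$; equivalently, once the first four identities are in hand it amounts to the single clean relation $z_0=z_{0,2}+z_{0,3}+z_{0,14}$. Finally one deforms the generic graphs to the edge $MA$ by letting $c_1,\dots,c_4\to\tfrac12$; this is harmless because every area label occurring in the graphs above stays strictly positive for $\mu$ in the relevant range, so the relations persist.

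The real work is bookkeeping, not a new idea. One must write down Delzant polytopes for $\Mucccc$ in the generic chamber with exactly the right edge directions and facet homology classes so that the two axis projections are the intended circle actions, and then track the homology labels of the fat vertices through every $GL(2,\Z)$ shear. That last point is genuinely necessary: because $\Symp_h(\Mucccc)$ is disconnected -- recall $\pi_0\simeq P_4(S^2)/\Z_2$ -- two decorated graphs of the same combinatorial type but with different homology classes on the fixed spheres can represent different elements of $\pi_1$, so the identification of circle actions after shearing is only valid once the homology labels are checked to coincide. I expect the rest -- listing the vertices of each transformed polytope and comparing $y$-projections -- to be entirely routine, which is exactly why the argument is relegated to the appendix.
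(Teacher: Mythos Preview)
Your proposal is correct and uses essentially the same approach as the paper: the appendix proves the proposition by writing down auxiliary Delzant polytopes for $\Mucccc$ in the generic chamber, applying $GL(2,\Z)$ shears, and matching the $y$-axis projections (with homology labels on the fat vertices) to read off linear relations among the $z_{0,i}$, $z_{0,1j}$, $z_0$, $z_1$ and $z_{1,4}$. The only organizational difference is that the paper factors through an intermediate Lemma establishing the five relations $z_1+z_{0,4}=z_{1,4}+z_0$, $z_0=z_{0,3}+z_{0,4}+z_{0,12}$, $z_{0,1}=z_{0,14}+z_{0,12}+z_{0,3}$, $z_{0,2}=z_{0,12}-z_{0,13}+z_{0,3}$, $z_{0,4}=z_{0,14}-z_{0,13}+z_{0,3}$ and then combines them algebraically, rather than aiming at the five stated identities directly; your alternative clean relation $z_0=z_{0,2}+z_{0,3}+z_{0,14}$ is equivalent to their $z_0=z_{0,3}+z_{0,4}+z_{0,12}$ once the $z_{0,i}$ identities are in hand, and the paper's proof of that single relation alone already requires twelve auxiliary polytopes, so your ``routine bookkeeping'' estimate is accurate in spirit but heavy in practice.
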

Putting together Proposition \ref{basicrelations} with equations \ref{result1}, \ref{result2}, \ref{result3}, \ref{result4} and \ref{result5} we then obtain the following result. 
\begin{proposition}\label{mainrelations}
  Consider the circle actions    $z_{0,1i}, i=2,3,4$, $z_1$ and $z_{1,4}$, defined in Figures \ref{graphMA} or \ref{graphsgeneric}  through their graphs, as elements of  $\pi_1 (\Symp_h(\Mucccc))$. Let $t= z_{0,12}+z_{0,13}- z_{0,14}$. Then for $k \in \Z_{\geq 0}$ and $i,j=1,2,3,4$ with $i \neq j$ we have the following identifications
 \begin{align*}
 & z_k= (2-k) z_1 + (k-1)(2z_{1,4} +t) &  &z_{k,ij}=2kz_{1,4}-kz_1 +kt + z_{0,ij} \\
 & z_{k,1}= (2k-1)z_{1,4} + (1-k)z_1 +kt +z_{0,14} &  &z_{k,124}= (2k+1)z_{1,4}-(k+1)z_1 +kt + z_{0,12} \\
 & z_{k,2}= (2k-1)z_{1,4} + (1-k)z_1 +kt -z_{0,13} & &z_{k,134}= (2k+1)z_{1,4}-(k+1)z_1 +kt + z_{0,13} \\
 & z_{k,3}= (2k-1)z_{1,4} + (1-k)z_1 +kt -z_{0,12} & &z_{k,234}= (2k+1)z_{1,4}-(k+1)z_1 +kt - z_{0,14} \\
  & z_{k,4}= (2k-1)z_{1,4} + (1-k)z_1 +(k-1)t  & &z_{k,123}= (2k+1)z_{1,4}-(k+1)z_1 +(k+1)t 
 \end{align*}
 and $z_{k,1234}=(2k+2)z_{1,4}-(k+2)z_1 +(k+1)t $.
\end{proposition}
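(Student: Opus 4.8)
The plan is to reduce everything to a single linear substitution into \eqref{result1}--\eqref{result5}. First I would rewrite Proposition \ref{basicrelations} in terms of $t=z_{0,12}+z_{0,13}-z_{0,14}$, which turns it into $z_0=2z_1-2z_{1,4}-t$, $z_{0,1}=z_1-z_{1,4}+z_{0,14}$, $z_{0,2}=z_1-z_{1,4}-z_{0,13}$, $z_{0,3}=z_1-z_{1,4}-z_{0,12}$ and $z_{0,4}=z_1-z_{1,4}-t$. I would also record the two ``flip'' identities $z_{0,ij\ell}=-z_{0,m}$ and $z_{0,1234}=-z_0$ (with $m$ the index complementary to $\{i,j,\ell\}$): these are exactly \eqref{result4} and \eqref{result5} evaluated at $k=0$, and they reflect the fact that the two relevant decorated graphs of Figure \ref{graphMA} differ by a vertical flip, i.e.\ by reversing the circle. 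Finally I would set $u:=z_1-z_0=2z_{1,4}-z_1+t$ as shorthand.

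The one ingredient needed beyond \eqref{result1}--\eqref{result5} and Proposition \ref{basicrelations} is the ``level one'' version of Proposition \ref{basicrelations}, namely the identity $z_{1,\sigma}=z_{0,\sigma}+u$ for each type $\sigma\in\{\,i,\ ij,\ ij\ell,\ 1234\,\}$ (understood whenever both actions are defined, cf.\ Proposition \ref{actionslist}). Conceptually this just says that adding one unit of $F$--twist always changes a circle action by the same class $u$, which for the plain family $z_k$ is the content of \eqref{result1}; concretely I would obtain it by running exactly the toric/Delzant plus upper--triangular $GL(2,\mathbb{Z})$ argument of Section 4 and of the appendix one step higher in the $F$--coordinate, so it is natural to fold it into the appendix alongside Proposition \ref{basicrelations}. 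Together with the flip identities of the previous step this yields in particular $z_{1,ij\ell}=-z_{0,m}+u$ and $z_{1,1234}=-z_0+u=z_1-2z_0$.

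With these in hand the rest is pure bookkeeping. From \eqref{result1}, $z_k=z_0+ku$; from \eqref{result2} and $z_{1,i}=z_{0,i}+u$, $z_{k,i}=z_{0,i}+ku$; from \eqref{result3}, $z_{k,ij}=z_{0,ij}+ku$; from \eqref{result4}, $z_{k,ij\ell}=k(-z_{0,m}+u)+(k-1)z_{0,m}=-z_{0,m}+ku$; from \eqref{result5}, $z_{k,1234}=k(z_1-2z_0)+(k-1)z_0=kz_1-(k+1)z_0$. Substituting the expressions for $z_0$, $u$ and the $z_{0,i}$ from the first step, and reading off which of $z_{0,12},z_{0,13},z_{0,14}$ occurs in each $z_{0,m}$, I would recover each of the eleven displayed formulas; for example $z_{k,4}=(z_1-z_{1,4}-t)+k(2z_{1,4}-z_1+t)=(1-k)z_1+(2k-1)z_{1,4}+(k-1)t$, and $z_{k,1234}=kz_1-(k+1)(2z_1-2z_{1,4}-t)=-(k+2)z_1+(2k+2)z_{1,4}+(k+1)t$.

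The hard part is not any single computation but keeping the bookkeeping straight: there are five indexed families, the graph flips of the first step contribute signs, and one must check that $u$ really is the \emph{same} increment across all types --- which is precisely what the level one relations of the second step guarantee. So the genuine content, beyond routine substitution, is establishing those level one relations, and that is done with the identical toric--Delzant toolkit already used for \eqref{result1}--\eqref{result5} and Proposition \ref{basicrelations}.
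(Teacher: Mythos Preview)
Your proposal is correct and follows the same route as the paper --- combining Proposition~\ref{basicrelations} with the recursions \eqref{result1}--\eqref{result5} --- but you are in fact more careful than the paper's one-line ``putting together''. You rightly observe that \eqref{result2}--\eqref{result5} still involve $z_{1,i}$ (for $i\neq 4$), $z_{1,ij}$, $z_{1,ij\ell}$ and $z_{1,1234}$, none of which Proposition~\ref{basicrelations} expresses in the five chosen generators; the ``level one'' relation $z_{1,\sigma}-z_{0,\sigma}=z_1-z_0$ that you supply is exactly what closes this gap, and the paper only establishes its $\sigma=4$ instance explicitly (relation \eqref{relation2} in Lemma~\ref{basicz0}), tacitly leaving the remaining cases to the reader under the rubric ``using a similar argument''. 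Your introduction of the uniform increment $u=z_1-z_0=2z_{1,4}-z_1+t$, your extraction of the flip identities $z_{0,ij\ell}=-z_{0,m}$ and $z_{0,1234}=-z_0$ from the $k=0$ cases of \eqref{result4}--\eqref{result5}, and the subsequent substitutions all check out against the displayed formulas.
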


\begin{remark}
It is clear from the definition of the circle actions $z_{0,ij}$ that 
$$z_{0,ij}=-z_{0,kl}.$$ 
Thus, we get 
$$z_{23}= -z_{0,14}, \quad z_{24}= -z_{0,13}, \quad  z_{34}= -z_{0,12}$$
which allows us to write $z_{k,ij}$ completely in terms of the chosen generators. 
\end{remark}

\begin{corollary}\label{notcircleactions}
If $\mu > \frac32$ then there are loops in $\pi_1(\Symp(\Muhalf))$  which are not represented by circle actions, although the fundamental group, rationally,  is generated by circle actions. 
\end{corollary}
\begin{proof}
We use the classification of all circle actions on $\Symp(\Muhalf)$ obtained in Proposition \ref{mainrelations} to show that those actions do not fill in the lattice $\Z^5$. 
In fact it is sufficient to look at the plane of the actions $z_{1,4}$ and $z_1$. There we have 5 families of points defined by the pairs $(2k-2,2-k)$, $(2k-1,1-k)$, $(2k,-k)$, $(2k+1,-(k+1))$ and $(2k+2,-(k+2))$. Each family is contained in one line with slope $-\frac12$ while the $y$-intercepts are given by $1,\frac12, 0, -\frac12$ and $-1$, respectively. We should also consider the integer multiples of these actions as they represent circle actions although they are not effective. The corresponding points lie in lines connecting the origin to the points representing the effective actions. The set of all these points clearly do not fill in the full lattice $\Z^2$ in this plane. For example, the primitive point $(2,3)$ is not contained in this set. Therefore there are elements in $\pi_1$ which cannot be represented by circle actions. 
\end{proof}

We now prove Theorem \ref{main2}.
\begin{proof}
In the case $1<\mu \leq  \frac32$ it follows from Theorem \ref{main} and in the case $\mu > \frac32$ it follows from Corollary \ref{notcircleactions}. 
\end{proof}

\section{Further Questions}
In this paper we deal with a particular case in the symplectic cone of $\Mucccc$, namely the edge $MA$, when $\mu >1$ and $c_i=1/2$ for all $i \in \{1,2,3,4 \}$. A very natural question is whether there are  other points in the symplectic cone   where the fundamental group of $\Symp_h(\Mucccc)$ is not generated Hamiltonian circle actions, similarly to what happens  along some points of the edge $MA$. For example, it is possible to check that along the edge $MD$, where 
$\mu=1, c_1=c_2=c_3=1/2 >c_4$ there are no circle actions. On one hand there are no graphs representing Hamiltonian $S^1$-spaces along this edge. However, the graph only encodes equivariant blow-ups. Not having such graphs does not a-priori rule out the possibility of \emph{exotic} circle actions, obtained by equivariant blow-ups corresponding to parameters $\mu',c_1',c_2',c_3',c_4'$ such that the symplectic manifold corresponding to these parameters is symplectomorphic to the symplectic manifold corresponding to the parameters $\mu,c_1,c_2,c_3,c_4$. On the other hand, \emph{exotic} circle actions were ruled out, first, by Pinsonnault in \cite{Pin2} and later by Karshon-Kessler-Pinsonnault in \cite{KKP}. 
Note that if we forget the $4^{th}$ blow-up, this case corresponds to the monotone case in $\Symp_h({S^2 \times S^2\#\,3\overline{ \bbcp}\,\!^2})$ and it is well-known that there are no Hamiltonian circle actions in this case. In fact this symplectomorphism group is contractible (see \cite{Eva}). However, by the work of \cite{LiLiWu2} we know that the rank of the fundamental group of $\Symp(\Mucccc)$ along the edge $MD$ is 5, so none of these 5 generators can be represented by  circle actions. 
Moreover, we believe that there is a neighbourhood of the monotone point $M$, including points in the generic case, such that the generators of the fundamental group of $\Symp(\Mucccc)$ cannot all be realized by circle actions. The main reason appears to be that one circle action of the type $z_{0,i}$ or $z_{1,i}$ for some $i \in  \{1,2,3,4 \}$ always have to be included in the set of generators, in order to have the required number of generators, but this implies that there must exist a fixed sphere with positive area in class $B-E_j-E_k- E_\ell$ or $B-F-E_i$, that is, $ \mu -c_j-c_k-c_\ell > 0$ or $\mu -1 -c_i >0$, respectively. However this condition does not necessarily hold for all points in the symplectic cone, in particular, for points close to the monotone point $M$. 

An alternative way of proving that there is an element of $ \pi_1(\Symp_h(\Mucccc))$ which cannot be represented by a circle action would be to compute the Samelson product of this loop with itself and check if it is non-zero, as done by O. Buse in \cite[Proposition 3.3]{Bus} (if it was generated by a circle action this Samelson product would be trivially 0). Moreover, it would be interesting to know if this loop in $\pi_1$ gives rise to new elements in higher homotopy groups, via iterated Samelson products. The problem with this approach is that it is not clear yet how to obtain the necessary information about the higher homotopy groups of $\Symp_h(\Mucccc)$ which is fundamental to work on these  ideas. The answer to these questions will be pursued in a different paper. 

On another direction, it seems very likely that Theorem 1.4 holds not only rationally but also in the integer case, that is, 

\begin{conjecture}
If $\mu > \frac32$ then the fundamental group $\pi_1(\Symp(\Muhalf))$ is generated by Hamiltonian circle actions. 
\end{conjecture}

In fact it is possible to show that the Seidel elements of the classes of the actions $z_{0,12}$, $z_{0,13}$, $z_{0,14}$, $z_1$ and $z_{1,4}$, seen as elements of $\pi_1(\Symp(\Muhalf))$, are primitive in the subgroup of invertible elements of the quantum homology of $\Symp(\Muhalf)$. This result  together with a geometric interpretation of the generators of $\pi_1(\Symp(\Muhalf))$ given in~\cite[Lemma 5.10]{LiLiWu2} might lead to a proof of this conjecture. We believe this should involve a detailed analysis of the strata of the space of almost complex structures and the generators of their homology groups. 

%
%

\begin{appendix}
\section{Computations on the quantum ring}\label{QHcomp}
In these section we prove Lemmas \ref{li_simple_ring} and \ref{li_bs}. 
\begin{proof}(of Lemma \ref{li_simple_ring})
Recall that we wish to prove that the invertible elements $b$, $b+f$ and $\left( b + f + e + \frac{t^{1-\mu}}{1-t^{1-\mu}}\right)(1-t^{1-\mu})\, t^{\frac{1}{6(1-2\mu)}}$ are linearly independent in the subgroup of invertible elements of the ring \eqref{simplified_ring}. Suppose 
$$ b^\alpha \,  (b+f)^\beta \, \left( b + f + e + \frac{t^{1-\mu}}{1-t^{1-\mu}}\right)^\gamma(1-t^{1-\mu})^\gamma \, t^{\frac{\gamma}{6(1-2\mu)}}=1, \quad \mbox{where} \quad \alpha, \beta, \gamma \in \Z.$$
Using the relations in the ideal $I'$ in \eqref{simplified_ring}, in particular relations (3) and (5), it follows that this is equivalent 
to 
$$ b^\alpha \,  b^\beta (1-f)^\beta \, b^\gamma \left(e + \frac{1}{1-t^{1-\mu}}\right)^\gamma(1-t^{1-\mu})^\gamma \, t^{\frac{\gamma}{6(1-2\mu)}}=1 \iff $$
$$ b^{\alpha+\beta +\gamma} \, (1-f)^\beta \,(e(1-t^{1-\mu}) + 1)^\gamma = t^{\frac{-\gamma}{6(1-2\mu)}} \iff $$
$$ b^{\alpha+\beta +\gamma} \, (1-\beta f) \,(e(1-t^{1-\mu}) + 1)^\gamma = t^{\frac{-\gamma}{6(1-2\mu)}}, $$
because $f^2=0$. Moreover, since $(1-\beta f)(1+\beta f)=1 $, if $\alpha+\beta +\gamma$ is even, then we obtain
\begin{equation}\label{powers_e}
(e(1-t^{1-\mu}) + 1)^\gamma = (1+ \beta f)\ t^{\frac{-\gamma}{6(1-2\mu)}}. 
\end{equation}
If $\gamma \geq 0$ then it should be clear from  relations in $I'$, in particular,  from relation (6) that we  cannot never obtain the right hand side of the last expression, since it is not possible to obtain such power on $t$, unless $\gamma=0$, which implies that $\beta =0$ and then $\alpha=0$. If $\gamma \leq 0$ then equation \eqref{powers_e} is equivalent to 
$$ (e(1-t^{1-\mu}) + 1)^{-\gamma} = (1- \beta f)\ t^{\frac{\gamma}{6(1-2\mu)}}$$ 
and again we we can conclude that $\gamma=0$. 
Similarly, 
if $\alpha+\beta +\gamma$ is odd, we obtain 
$$(e(1-t^{1-\mu}) + 1)^\gamma = (b- \beta f)\ t^{\frac{-\gamma}{6(1-2\mu)}} $$
and again we can conclude that $\gamma=\beta=0$ and $\alpha+\beta +\gamma$
cannot be odd. This concludes the proof of the lemma. 
\end{proof}

\begin{proof}(of Lemma \ref{li_bs})
We first prove by induction on $n \in \N$ that 
\begin{align}
& b_{ij}^{2n}= n(2n \, b_{ij}f_{ij}+ (2n-1)f_{ij}+ f_{k\ell}) +1; \label{b2n}\\
& b_{ij}^{2n+1}= -n(2(n+1) b_{ij}f_{ij}+ (2n+1)f_{ij}+ f_{k\ell}) +b_{ij}.\label{b2n+1}
\end{align}
where  $i,j,k,\ell \in \{1,2,3,4 \}$ are all distinct. 

Note that if $n=1$ then \eqref{b2n} gives $b_{ij}^{2}= 2 b_{ij}f_{ij} + 1 f_{ij}+ f_{k\ell} $ which agrees with relation (3) in Proposition \ref{QHring}. Moreover, it follows from relation (3) together with relations (8) and (11) that 
\begin{align*}
  b_{ij}^2f_{ij} & =   2 b_{ij}f_{ij}^2 + f_{ij}^2+ f_{ij}f_{k\ell} + f_{ij} \\
  & = -f_{ij}^2 + f_{ij}  \\
  & = -2 b_{ij}f_{ij} - f_{ij}.
\end{align*}
Now if we assume \eqref{b2n} then  the previous equation together with  relation (11) yields
\begin{align*}
   b_{ij}^{2(n+1)} &  = b_{ij}^{2n}\, b_{ij}  \\
   & = n (2n \, b_{ij}^2f_{ij}+ (2n-1)f_{ij}b_{ij}+ f_{k\ell}b_{ij}) +b_{ij} = \\
   & = n (2n (-2 b_{ij}f_{ij} - f_{ij}) + (2n-1)f_{ij}b_{ij} - f_{ij}b_{ij} -  f_{ij}- f_{k\ell}) +b_{ij} \\
   & = -n(2(n+1) b_{ij}f_{ij}+ (2n+1)f_{ij}+ f_{k\ell})+b_{ij}.
\end{align*}
Similarly, it is easy to check that 
$$b_{ij}^{2n+1} b_{ij} = (n+1)(2(n+1) \, b_{ij}f_{ij}+ (2n+1)f_{ij}+ f_{k\ell}) +1.$$ 
Next, using \eqref{b2n} and \eqref{b2n+1} we compute $b_{ij}^{\alpha_1}\, b_{ik}^{\alpha_2}\, b_{jk}^{\alpha_3}$, where $\alpha_1,\alpha_2, \alpha_3 \in \Z$, to conclude that indeed  $b_{ij}, b_{ik}, b_{jk}$ are linearly independent. 
Assume first that $\alpha_1,\alpha_2,\alpha_3 \geq 0$ and, in particular, 
$\alpha_i=2n_i$, with $n_i \in \N_0$. 
Then 
\begin{equation*}
  b_{ij}^{\alpha_1}\, b_{ik}^{\alpha_2} = [n_1(2n_1 \, b_{ij}f_{ij}+ (2n-1)f_{ij}+ f_{k\ell}) +1 ] \, [n_2(2n_2 \, b_{ik}f_{ik}+ (2n-1)f_{ik}+ f_{j\ell}) +1 ]. 
\end{equation*}
Recall relation (4) in Proposition \ref{QHring}: $b_{ij}f_{ik}=-f_{ik}$. This  yields 
\begin{align*}
 b_{ij}^{\alpha_1}\, b_{ik}^{\alpha_2}   = & \,  4 n_1^2 \,n_2^2 \,f_{ij}f_{ik} -n_2 (2n_2-1) 2n_1^2 \, f_{ij}f_{ik}  -n_1 (2n_1-1) 2n_2^2 \, f_{ij}f_{ik} - 2n_1^2 \,n_2 \, f_{ij}f_{j\ell}  \\  & + 2n_1^2\,  b_{ij}f_{ij} + n_1 (2n_1-1) \, n_2 (2n_2-1) \, f_{ij}f_{ik} + n_1 (2n_1-1) \,n_2 f_{ij}f_{j\ell} + n_1 (2n_1-1) \,f_{ij} \\  & - 2n_1 \, n_2^2 \,f_{ik}f_{k\ell}
  + n_1\,n_2 (2n_2-1) \,f_{ik}f_{k\ell} + n_1\, n_2\, f_{k\ell}f_{j\ell} + n_1\, f_{k\ell} + 2n_2^2\, b_{ik}f_{ik} \\
  &  + n_2 (2n_2-1) \, f_{ik} + n_2\, f_{j\ell} + 1 \\
  = & \, n_1\, n_2 (f_{ij}f_{ik} -f_{ij}f_{j\ell} -f_{ik}f_{k\ell}+ f_{k\ell}f_{j\ell}) + 2\, n_1^2  b_{ij}f_{ij} + 2\, n_2^2  b_{ik}f_{ik} +  n_1 (2n_1-1) \,f_{ij}\\ 
 & + n_2 (2n_2-1) \, f_{ik} + n_1\, f_{k\ell} +  n_2\, f_{j\ell} + 1 \\
  = & \, 2\, n_1^2  b_{ij}f_{ij} +  2\, n_2^2  (b_{ij}f_{ij} + e_k -e_j) +  n_1 (2n_1-1) \,f_{ij} +  n_2 (2n_2-1) \, (f_{ij} +e_j-e_k) \\
  & + n_1 (f_{ij} +e_i+e_j - e_k - e_l) + n_2 (f_{ij} +e_i- e_l) +1 \\
  = & \, 2 (n_1^2 +n_2^2) f_{ij} (b_{ij} +1 ) + e_i(n_1+n_2) + e_j(n_1-n_2) -e_k(n_2-n_1) - e_\ell(n_1+n_2) +1
\end{align*}
where the step before the last follows from relations (2) and (9) in Proposition \ref{QHring} and the definition of $f_{ij}$ and $e_i$. 
Similar computations then give 
\begin{align*}
b_{ij}^{\alpha_1}\, b_{ik}^{\alpha_2}\, b_{jk}^{\alpha_3} = & \,2 \,(n_1^2 +n_2^2+ n_3^2) f_{ij} (b_{ij} +1 ) + e_i(n_1+n_2-n_3) + e_j(n_1-n_2+n_3)  \\ & +e_k(n_3+n_2-n_1) - e_\ell(n_1+n_2+n_3) +1.
\end{align*}
Now it is clear that $b_{ij}^{\alpha_1}\, b_{ik}^{\alpha_2}\, b_{jk}^{\alpha_3}=1$ iff $n_1=n_2=n_3=0$. 
In the case $\alpha_1= 2n_1+1$, $\alpha_2=2n_2$ and $\alpha_3=2n_3$ analog computations yield 
\begin{align*}
b_{ij}^{\alpha_1}\, b_{ik}^{\alpha_2}\, b_{jk}^{\alpha_3} = & \,-2 \,(n_1^2 +n_2^2+ n_3^2+n_1) f_{ij} (b_{ij} +1 ) - e_i(n_1+n_2-n_3) - e_j(n_1-n_2+n_3)  \\ & - e_k(n_3+n_2-n_1) + e_\ell(n_1+n_2+n_3) + b_{ij}.
\end{align*}
If $\alpha_1= 2n_1+1$, $\alpha_2=2n_2+1$ and $\alpha_3=2n_3$ then 
\begin{align*}
b_{ij}^{\alpha_1}\, b_{ik}^{\alpha_2}\, b_{jk}^{\alpha_3} = & \,[2 \,(n_1^2 +n_2^2+ n_3^2+n_1+n_2)+1] f_{ij} (b_{ij} +1 ) + e_i(n_1+n_2-n_3+1) \\ & + e_j(n_1-n_2+n_3)  + e_k(n_3+n_2-n_1) - e_\ell(n_1+n_2+n_3+1) + 1.
\end{align*}
Finally, if $\alpha_i= \, 2n_i+1 $ one obtains 
\begin{align*}
b_{ij}^{\alpha_1}\, b_{ik}^{\alpha_2}\, b_{jk}^{\alpha_3} = & \,-[2 \,(n_1^2 +n_2^2+ n_3^2+n_1+n_2+n_3)+1] f_{ij} (b_{ij} +1 ) - e_i(n_1+n_2-n_3) \\& - e_j(n_1-n_2+n_3)  - e_k(n_3+n_2-n_1+1) - e_\ell(n_1+n_2+n_3+1)  + b_{ij}.
\end{align*}
So it follows that $b_{ij}^{\alpha_1}\, b_{ik}^{\alpha_2}\, b_{jk}^{\alpha_3}=1$ iff $\alpha_i=0$ for all $i$. 

If $\alpha_m< 0$, then we obtain similar expressions because $b_{ij}^{\alpha_m}= b_{k\ell}^{-\alpha_m}$. 
More precisely, in the case $\alpha_m >0$ the products above contain the term  $n_m(e_i +e_j -e_k-e_\ell)$, but if $\alpha_m< 0$ it is not hard to check that then the term is replaced by its symmetric $n_m(-e_i -e_j +e_k + e_\ell)$. Therefore, we prove that the elements $b_{ij}, b_{ik}, b_{jk}$ are linearly independent, and, in particular, we finish the proof of  Lemma \eqref{li_bs}. 
\end{proof}

\section{Proof of Proposition \ref{basicrelations}}\label{auxiliaryrelations}
This section is devoted to a proof of Proposition \ref{basicrelations}. First we need to prove that the following identifications hold. 
\begin{lemma}\label{basicz0}
Consider the Hamiltonian circle actions $z_0, z_{0,1i}, z_{0,j}$, $ z_1$ and $z_{1,4}$, where $i \in \{2,3,4\}$ and $j  \in \{1,2,3,4\}$, whose graphs are represented in Figure \ref{graphsgeneric}, as elements of the fundamental group of 
$\Symp_h(\Mucccc)$. Then we have the following relations
\begin{align}
& z_1   +z_{0,4}= z_{1,4}+ z_0. \label{relation2}\\
& z_0  = z_{0,3} +z_{0,4}+ z_{0,12}, \label{relation1}\\
& z_{0,1}  = z_{0,14}+z_{0,12} +z_{0,3}, \label{basicz01}\\
& z_{0,2}  = z_{0,12}- z_{0,13} +z_{0,3},  \label{basicz02} \quad \mbox{and}\\
& z_{0,4}  = z_{0,14}- z_{0,13} +z_{0,3},  \label{basicz04}
\end{align}
\end{lemma}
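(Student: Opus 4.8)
The plan is to prove all five identities by the mechanism already used repeatedly in Section~4.2 for the relations \eqref{firstequation}--\eqref{result5}. Recall how that goes: given a four-dimensional toric action on $\Mucccc$ presented by a Delzant polytope $P$, its two factor circle subgroups are the projections of $P$ onto the coordinate axes, say $u$ (onto the $x$-axis) and $v$ (onto the $y$-axis); applying to $P$ the element of $GL(2,\Z)$ with rows $(1,0)$ and $(n,-1)$ — i.e. the substitution $(x,y)\mapsto(x,\,nx-y)$ — produces another Delzant polytope for the same toric manifold whose $y$-projection represents the class $nu-v\in\pi_1(\Symp_h(\Mucccc))$ (and similarly $nu+v$ for the matrix with rows $(1,0),(n,1)$). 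Hence, whenever two toric actions are transformed so that the resulting polytopes have the same decorated graph, Karshon's classification forces the associated classes in $\pi_1(\Symp_h(\Mucccc))$ to coincide. As in Section~4.2, I will use throughout that a decorated graph \emph{together with} the homology labels of its fixed surfaces determines the class, even though $\Symp_h(\Mucccc)$ is disconnected, and it is enough to argue in the generic chamber $0<c_4<c_3<c_2<c_1<c_i+c_j<1<\mu$.

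For the four identities \eqref{relation1}, \eqref{basicz01}, \eqref{basicz02} and \eqref{basicz04}, which involve only actions of the form $z_{0,S}$ (with fixed surfaces in the classes $B-\sum_{i\in S}E_i$ and $B-\sum_{i\notin S}E_i$, so that $z_{0,S}=-z_{0,S^c}$ as in the Remark after Proposition~\ref{basicrelations}), the plan is to start from the toric action $T_{0,12}$ of Figure~\ref{Toric picture T} and from $T_0$ of Figure~\ref{toricT0}, together with the analogous polytopes obtained by permuting or removing exceptional divisors, apply the substitutions $(x,y)\mapsto(x,x-y)$ or $(x,y)\mapsto(x,y-x)$, match $y$-projections, and eliminate the auxiliary $F$- and $E$-type factors (of the kind of $y_0$ and $w_k$) exactly as was done between \eqref{firstequation} and \eqref{result1}. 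One expects to obtain first the ``cocycle'' relations among the three pairs $z_{0,12},z_{0,13},z_{0,14}$ and then, by one further comparison each, the identities \eqref{basicz01}, \eqref{basicz02}, \eqref{basicz04}, expressing the singleton actions $z_{0,1},z_{0,2},z_{0,4}$ in terms of those pairs and $z_{0,3}$, and \eqref{relation1}, doing the same for $z_0$. On the graph side each individual step is visible as the effect of blowing up at a point of the minimal fixed surface, Figure~\ref{Karshon1}.

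The remaining identity \eqref{relation2}, rewritten as $z_1-z_0=z_{1,4}-z_{0,4}$, says that the families $\{z_k\}$ and $\{z_{k,4\}}$ governed by \eqref{result1} and \eqref{result2} have the same shift generator, so it is the one that links the $z_{0,\ast}$ world to $z_1$ and $z_{1,4}$. I would prove it by applying the same device to the toric action $T_1$ of Figure~\ref{ToricT1} and to $T_{1,4}$ (the action $T_{k,4}$ of Figure~\ref{toricTk4} with $k=1$): the substitution $(x,y)\mapsto(x,x-y)$ turns their $y$-projections into $w_1$ and $w_{1,4}$, which by \eqref{auxiliary1} and \eqref{auxiliary2} equal $-z_0+y_0$ and $-z_{0,4}+y_{0,4}$; so \eqref{relation2} reduces to identifying the graphs of $y_0$ and $y_{0,4}$ after one further transformation, which is read off directly from $T_0$ and $T_{0,4}$.

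The algebra above is routine bookkeeping; the real difficulty — and the part that must be carried out with care — is the combinatorial--geometric one of producing, for each identity, an honest Delzant polytope on $\Mucccc$: verifying simplicity, rationality and smoothness, checking that it really presents $\Mucccc$ in the generic chamber, and, most delicately, tracking the homology classes of \emph{all} the fixed surfaces of the transformed polytopes so that the final equalities hold in $\pi_1(\Symp_h(\Mucccc))$ and not merely between undecorated graphs. Once all the polytopes are in place, each of the five identities follows from a single comparison of $y$-projections together with the relations \eqref{firstequation}--\eqref{result5} already established.
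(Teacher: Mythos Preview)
Your overall strategy---Delzant polytopes, $GL(2,\Z)$ moves, matching Karshon graphs with homology labels---is exactly the paper's. The gap is in the specific plan, most visibly for \eqref{relation2}.

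Your reduction of \eqref{relation2} ends with ``identifying the graphs of $y_0$ and $y_{0,4}$ after one further transformation, which is read off directly from $T_0$ and $T_{0,4}$''. But $y_0$ and $y_{0,4}$ are \emph{distinct} circle actions: their decorated graphs differ already in the area (and class) of the minimal fixed sphere, and no $GL(2,\Z)$ move on $T_0$ or $T_{0,4}$ alone can make them coincide. More structurally, every polytope you propose to reuse from Section~4.2 ($T_k$, $T_0$ on one side; $T_{k,4}$, $T_{0,4}$ on the other) only yields relations \emph{within} the family $\{z_k\}$ or \emph{within} $\{z_{k,4}\}$; none of them links the two families, which is precisely what \eqref{relation2} asserts. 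The paper bridges the families by introducing four \emph{new} polytopes: a first pair whose $x$-projections are $z_1$ and $z_{1,4}$ but whose $y$-projections are new auxiliary actions $c_1,c_{1,4}$, giving $z_1+c_1=z_{1,4}+c_{1,4}$ after a $\left(\begin{smallmatrix}1&0\\1&1\end{smallmatrix}\right)$ move; and a second pair, sharing a common $y$-projection, that expresses $c_1$ and $c_{1,4}$ as $b+z_{0,4}$ and $b+z_0$ for the same $b$. Only with this extra input does the cancellation go through.

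The situation for \eqref{relation1}--\eqref{basicz04} is similar but worse in degree: starting from $T_0$, $T_{0,12}$ and their permuted cousins does not suffice. For \eqref{relation1} alone the paper builds twelve auxiliary polytopes, in two blocks of six, and chains together roughly nine intermediate relations (of the types $x_i=x_j$, $t_i=t_j$, $x_i+y_i=x_j+y_j$, $y_i=t_i-s_i$, $s_1+t_1=s_6+t_6$) before the auxiliary $y$-, $t$- and $x$-factors cancel and one is left with $s_3-s_4=s_1-s_5$, which is \eqref{relation1} in disguise. So the difficulty you flag in your last paragraph is real, but it is considerably larger than ``one comparison per identity'': the polytopes you name do not contain enough cross-relations, and the paper's proof lives almost entirely in the construction of the additional ones.
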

\begin{proof}
In order to prove relation \eqref{relation2} we need to consider  the two toric actions on $\Mucccc$ represented in the polygons of Figure \ref{toricT1T14}. 

\begin{figure}[thp]
\begin{minipage}{.45\textwidth}
\begin{tikzpicture}[scale=0.7, roundnode/.style={circle, draw=black!80, thick, minimum size=7mm}, font=\footnotesize]
 
    \draw[->][black!70] (-3.2,0) -- (0.5,0); 
    \draw[->][black!70] (-3,-3) -- (-3,5.4); 

  \draw[brown] (-3,0) -- (-0.3,-2.7);
  \draw[brown] (-3,0) -- (-3,4);
    \draw[brown] (-3,4) -- (-2 ,5);
    \draw[brown] (-2,5) -- (-1,5);
     \draw[brown] (-1,5) -- (-0.5,4.5);
      \draw[brown] (0,3.5) -- (-0.5,4.5);
    \draw[brown] (0,3.5) -- (0,-2.7);
    \draw[brown] (0,-2.7) -- (-0.3,-2.7);
   
 \draw[dashed, black!60] (-0.3,-2.7) --(-0.3,0);
 \draw[dashed, black!60] (-2,0) -- (-2,5);
\draw[dashed, black!60] (-1,0) -- (-1,5);
\draw[dashed, black!60] (-0.5,4.5) -- (-0.5,0);
\draw[dashed, black!60] (-3,5) -- (-2,5);
\draw[dashed, black!60] (-3,4.5) -- (-0.5,4.5);
\draw[dashed, black!60] (-3,3.5) -- (0,3.5);
\draw[dashed, black!60] (-3,-2.7) -- (-0.3,-2.7);

  \node at (-3.8,5.1) {$\mu -c_1$};
   \node at (-4.7,4.5) {$ \mu -c_1-c_2+c_3$};
     \node at (-4.7,3.5) {$ \mu -c_1-c_2-c_3$};
 \node[blue] at (-3.8,2) {$B-F$};
 \node[blue] at (2.8,2.7) {$B+F -E_1-E_2-E_3-E_4$};
     \node at (-3.8,-2.7) {$-1+c_4$};
      \node[rotate=45] at (-2.4,-0.5) {$1-c_1$};
   \node[rotate=45] at (-1.5,-0.5) {$1-c_2$};
    \node[rotate=45] at (-0.9,-0.5) {$1-c_3$};
     \node[rotate=45] at (0.1,0.5) {$1-c_4$};
 
\end{tikzpicture}
\end{minipage}%
\begin{minipage}{.5\textwidth}
\quad 
\begin{tikzpicture}[scale=0.7, roundnode/.style={circle, draw=black!80, thick, minimum size=7mm}, font=\footnotesize]
 
  \draw[->][black!70] (-3.2,0) -- (0.5,0); 
    \draw[->][black!70] (-3,-3.2) -- (-3,5.4); 

  \draw[brown] (-2.7,-0.3) -- (-0,-3);
  \draw[brown] (-3,0.3) -- (-3,4);
   \draw[brown] (-2.7,-0.3) -- (-3,0.3);
    \draw[brown] (-3,4) -- (-2 ,5);
    \draw[brown] (-2,5) -- (-1,5);
     \draw[brown] (-1,5) -- (-0.5,4.5);
      \draw[brown] (0,3.5) -- (-0.5,4.5);
    \draw[brown] (0,3.5) -- (0,-3);

 \draw[dashed, black!60] (-2.7,-0.2) --(-2.7,0);
 \draw[dashed, black!60] (-2,0) -- (-2,5);
\draw[dashed, black!60] (-1,0) -- (-1,5);
\draw[dashed, black!60] (-0.5,4.5) -- (-0.5,0);
\draw[dashed, black!60] (-3,5) -- (-2,5);
\draw[dashed, black!60] (-3,4.5) -- (-0.5,4.5);
\draw[dashed, black!60] (-3,3.5) -- (0,3.5);
\draw[dashed, black!60] (-3,-3) -- (0,-3);
\draw[dashed, black!60] (-3,-0.3) -- (-2.7,-0.3);

  \node at (-3.8,5.1) {$\mu -c_1$};
   \node at (-4.7,4.5) {$ \mu -c_1-c_2+c_3$};
     \node at (-4.7,3.5) {$ \mu -c_1-c_2-c_3$};
        \node[blue] at (-4.2,2) {$B-F-E_4$};
   \node[blue] at (2.3,2.4) {$B+F -E_1-E_2-E_3$};
    \node at (-3.5,-0.3) {$-c_4$};
    \node at (-3.4,-3) {$-1$};
  \node[rotate=45] at (-2.4,-0.5) {$1-c_1$};
   \node[rotate=45] at (-1.5,-0.5) {$1-c_2$};
    \node[rotate=45] at (-0.9,-0.5) {$1-c_3$};
    \node[rotate=45] at (-2.6,0.3) {$c_4$};

\end{tikzpicture}
\end{minipage}%

\caption{Auxiliary toric actions}
 \label{toricT1T14}
 \end{figure}
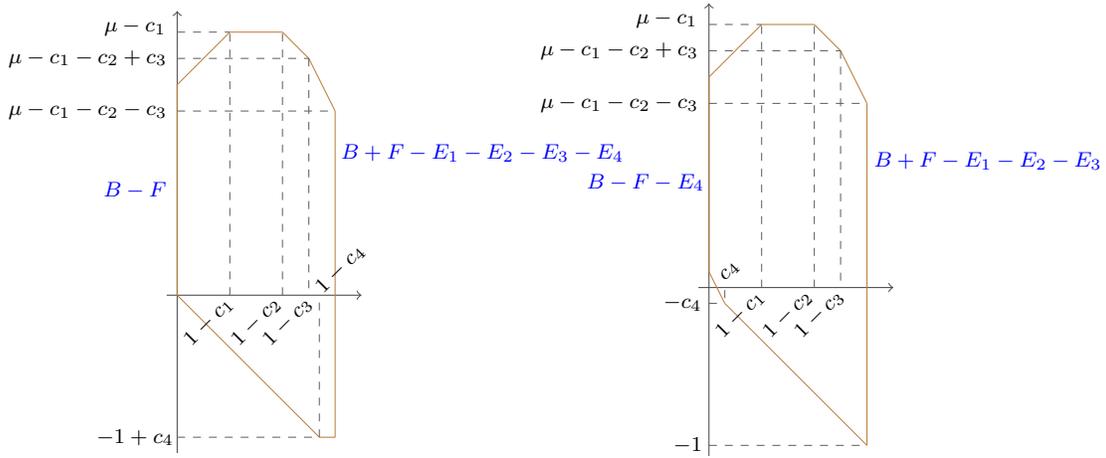

Note that projecting onto the $x$-axis we obtain the graphs of the circle actions $z_1$ on the left and $z_{1,4}$ on the right. On the other hand, projecting onto the $y$-axis yields the graphs of two circle actions (see Figure \ref{graphsc1c14}) that we will denote by $c_1$ and $c_{1,4}$. 

\begin{figure}[thp]
\begin{minipage}{.45\textwidth}
\begin{tikzpicture}[scale=1.1,font=\footnotesize]
       \fill[black] (2,1.8) ellipse (1 and 0.2);
	\node at (4.6,1.8) {$\displaystyle{\mu -c_1, \ c_1-c_2 }$};
	\filldraw [black] (2.1,0.3) circle (2pt);
	\filldraw [black] (2.1,1) circle (2pt);
	 \draw[dashed, black!60] (2.1,1) --(2.1,1.8); 
	\node at (2.7,1.4) {$ E_2-E_3$};
	 \draw (2.1,0.3) -- (2.1,1);
	 \node at (1.9,0.7) {$2$};
	  \node at (2.3,0.7) {$E_3$};
	   \draw[dashed, black!60] (2.1,0.3) --(2.1,-1.5); 
	    \node at (3.9,-1) {$B+F-E_1-E_2-E_3-E_4$};
	\node at (4.6 ,1) {$\mu-c_1 -c_2 +c_3$};
        \node at (4.6 ,0.2) {$\mu-c_1 -c_2 -c_3$};
	\filldraw [black] (1.5,0.5) circle (2pt);
	 \draw[dashed, black!60] (1.5,0.5) --(1.5,1.8); 
	\node at (1,1) {$ F-E_1$};
        \filldraw [black] (1.5,-0.5) circle (2pt);
        \draw[dashed, black!60] (1.5,-0.5) --(1.5,-1.5); 
        \node at (1 ,-0.1) {$ B-F$};
         \draw[dashed, black!60] (1.5,-0.5) --(1.5,0.5); 
	\node at (1 ,-1) {$ F-E_4$};
	\node at (4 ,0.6) {$\mu-1$};
        \node at (3.8 ,-0.5) {$0$};
        \fill[black] (2,-1.5) ellipse (1 and 0.2);
	\node at (4.3,-1.5) {$ -1+c_4, \ c_4$};
     \node at (2,-2.2) {Circle action $c_{1}$};
\end{tikzpicture}

\end{minipage}%
\begin{minipage}{.45\textwidth}
\quad 
\begin{tikzpicture}[scale=1.1,font=\footnotesize]
       \fill[black] (2,1.8) ellipse (1 and 0.2);
	\node at (4.55,1.8) {$\displaystyle{\mu -c_1, \ c_1-c_2 }$};
	\filldraw [black] (2.3,0.3) circle (2pt);
	\filldraw [black] (2.3,1) circle (2pt);
	 \draw[dashed, black!60] (2.3,1) --(2.3,1.8); 
	\node at (2.9,1.4) {$ E_2-E_3$};
	 \draw (2.3,0.3) -- (2.3,1);
	 \node at (2.1,0.7) {$2$};
	  \node at (2.6,0.7) {$E_3$};	
	   \filldraw [black] (1.5,0.5) circle (2pt);
	 \draw[dashed, black!60] (1.5,0.5) --(1.5,1.8); 
	\node at (1,1) {$ F-E_1$};
        \node at (4.6 ,1) {$\mu-c_1 -c_2 +c_3$};
        \node at (4.6 ,0.2) {$\mu-c_1 -c_2 -c_3$};	
        
	\filldraw [black] (1.5,-0.5) circle (2pt);
	\node at (3.9 ,0.6) {$\mu-1$};
	        \node at (3.7 ,-0.5) {$c_4$};
        \filldraw [black] (2,-1.5) circle (2pt);
         \filldraw [black] (1.5,-1) circle (2pt);
         \draw[dashed, black!60] (1.5,-0.5) --(1.5,0.5); 
         \node at (0.7 ,-0.1) {$ B-F-E_4$};
          \node at (1.75,-0.8) {$E_4$};
          \node at (1.3 ,-0.75) {$2$};
         \draw (1.5,-1) -- (1.5,-0.5);
          \draw[dashed, black!60] (1.5,-1) --(2,-1.5); 
            \draw[dashed, black!60] (2.3,0.3) --(2,-1.5); 
	\node at (1.3 ,-1.4) {$ F-E_4$};
	\node at (3.6,-0.2) {$B+F-E_1-E_2-E_3$};
         \node at (3.8,-1) {$ -c_4$};
	\node at (3.7,-1.5) {$ -1$};
     \node at (2,-2.2) {Circle action $c_{1,4}$};
\end{tikzpicture}
\end{minipage}%

\caption{Graphs of the actions $c_1$ and $c_{1,4}$}
 \label{graphsc1c14}
 \end{figure}
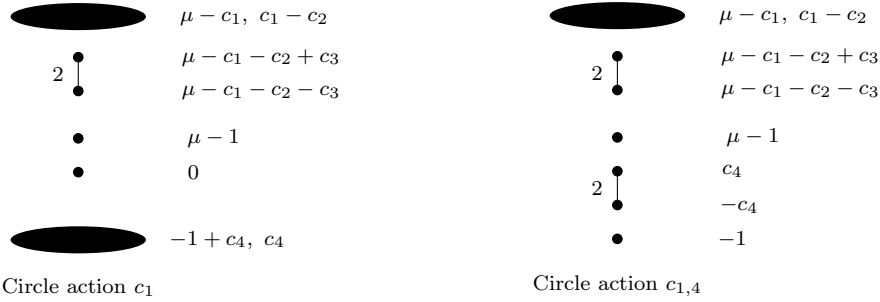

It is easy to check that  performing first a $GL(2, \Z)$ transformation represented by the matrix 
$$ \left (
\begin{array}{cc}
1  & 0 \\
1 & 1 
\end{array} \right)$$ 
to both polygons of Figure \ref{toricT1T14}
and then a projection onto the $y$-axis we obtain the same graph which implies that the following identification holds 
\begin{equation} \label{auxiliaryrelation}
z_1+ c_1 = z_{1,4}+ c_{1,4}. 
\end{equation}

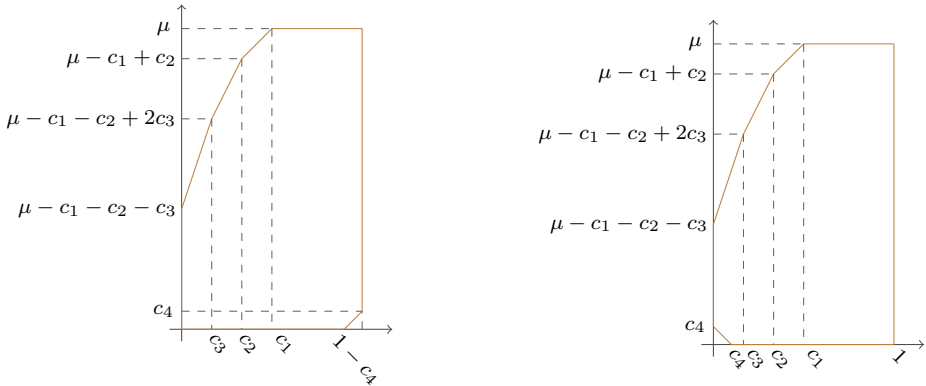
\begin{figure}[thp]
\begin{minipage}{.45\textwidth}
\begin{tikzpicture}[scale=0.8, roundnode/.style={circle, draw=black!80, thick, minimum size=7mm}, font=\footnotesize]
 
    \draw[->][black!70] (-3.2,0) -- (0.5,0); 
    \draw[->][black!70] (-3,-0.2) -- (-3,5.4); 

	\draw[brown] (-0.3,0) -- (0,0.3);
	\draw[brown] (-3,0) -- (-3,2);
    \draw[brown] (-2,4.5) -- (-1.5,5);
     \draw[brown] (-2.5,3.5) -- (-2,4.5);
     \draw[brown] (-2.5,3.5) -- (-3,2);
    \draw[brown] (-1.5,5) -- (0,5);
    \draw[brown] (0,5) -- (0,0.3);
    \draw[brown] (-3,0) -- (-0.3,0);

	\draw[dashed, black!60] (-1.5,5) --(-1.5,0);
	\draw[dashed, black!60] (-2,4.5) -- (-2,0);
	\draw[dashed, black!60] (-3,5) -- (-1.5,5);
	\draw[dashed, black!60] (-3,4.5) -- (-2,4.5);
	\draw[dashed, black!60] (-2.5,3.5) -- (-2.5,0);
	\draw[dashed, black!60] (-3,3.5) -- (-2.5,3.5);
	\draw[dashed, black!60] (-3,0.3) -- (0,0.3);
        \draw[dashed, black!60] (0,0) -- (0,0.3);

    \node at (-3.3,0.3) {$c_{4}$};
    \node at (-4.4,2) {$\mu-c_1-c_2-c_3$};
	\node at (-3.3,5) {$\mu$};
	\node at (-4,4.5) {$\mu-c_{1}+c_2 $};
	\node at (-4.5,3.5) {$\mu-c_1-c_{2}+2c_3 $};
	\node[rotate=-45] at (-1.3,-0.25) {$ c_{1} $};
        \node[rotate=-45] at (-1.9,-0.25) {$  c_{2} $};
         \node[rotate=-45] at (-2.4,-0.25) {$  c_{3} $};
	\node[rotate=-45] at (-0.1, -0.5) {$ 1-c_4$};

\end{tikzpicture}
\end{minipage}%
\begin{minipage}{.45\textwidth}
\quad 
\begin{tikzpicture}[scale=0.8, roundnode/.style={circle, draw=black!80, thick, minimum size=7mm}, font=\footnotesize]
 
    \draw[->][black!70] (-3.2,0) -- (0.5,0); 
    \draw[->][black!70] (-3,-0.2) -- (-3,5.4); 

	\draw[brown] (-3,0.3) -- (-2.7,0);
	\draw[brown] (-3,0.3) -- (-3,2);
    \draw[brown] (-2,4.5) -- (-1.5,5);
     \draw[brown] (-2.5,3.5) -- (-2,4.5);
     \draw[brown] (-2.5,3.5) -- (-3,2);
    \draw[brown] (-1.5,5) -- (0,5);
    \draw[brown] (0,5) -- (0,0);
    \draw[brown] (-2.7,0) -- (-0,0);

	\draw[dashed, black!60] (-1.5,5) --(-1.5,0);
	\draw[dashed, black!60] (-2,4.5) -- (-2,0);
	\draw[dashed, black!60] (-3,5) -- (-1.5,5);
	\draw[dashed, black!60] (-3,4.5) -- (-2,4.5);
	\draw[dashed, black!60] (-2.5,3.5) -- (-2.5,0);
	\draw[dashed, black!60] (-3,3.5) -- (-2.5,3.5);

    \node at (-3.3,0.3) {$c_{4}$};
    \node at (-4.4,2) {$\mu-c_1-c_2-c_3$};
	\node at (-3.3,5) {$\mu$};
	\node at (-4,4.5) {$\mu-c_{1}+c_2 $};
	\node at (-4.5,3.5) {$\mu-c_1-c_{2}+2c_3 $};
	\node[rotate=-45] at (-1.3,-0.25) {$ c_{1} $};
        \node[rotate=-45] at (-1.9,-0.25) {$  c_{2} $};
         \node[rotate=-45] at (-2.3,-0.25) {$  c_{3} $};
        \node[rotate=-45] at (-2.6,-0.25) {$  c_{4} $};
	\node[rotate=-45] at (0.1, -0.2) {$ 1$};

\end{tikzpicture}
\end{minipage}%
\caption{Toric actions $(a_1,b_1)$ and $(a_{1,4},b_{1,4})$}
 \label{auxiliarypolytopes}
 \end{figure}

Next consider the polygons of Figure \ref{auxiliarypolytopes}. Again they represent particular toric actions on $\Mucccc$. We denote the circle actions whose graphs are obtained by projecting to the $x$ and $y$-axis by $(a_1,b_1)$ and $(a_{1,4}, b_{1,4})$. Since, clearly the actions $b_1$ and $b_{1,4}$ are represented by the same graph, it follows that $b_1=b_{1,4}$. Next, performing 
a $GL(2, \Z)$ transformation represented by the matrix 
$$ \left (
\begin{array}{cc}
1  & 0 \\
-1 & 1 
\end{array} \right)$$ 
to both polygons and then projecting to the $y$-axis  we obtain the graphs of $c_1$ and $c_{1,4}$. Therefore, as elements of $\pi_1 (\Symp_h( \Mucccc))$, the following identifications hold 
$$ b_1 - a_1= c_1 \quad \mbox{and} \quad  b_{1,4} -a_{1,4}= c_{1,4}.$$ On the other hand note that $a_1$ is the generator  $-z_{0,4}$ while  $a_{1,4}$  is  $ -z_0$. Therefore we have
$$ b_1 +z_{0,4}= c_1 \quad \mbox{and} \quad  b_{1,4} +z_0= c_{1,4}. $$
Finally, substituting $c_1$ and $c_{1,4}$ in relation \eqref{auxiliaryrelation} and using $b_1=b_{1,4}$, we obtain the desired relation \eqref{relation2}.

In order to prove the remaining relations in this lemma we use an argument involving several auxiliary polygons representing different toric actions on $\Mucccc$ that can be related between each other using 
Karshon's and Delzant's classifications. Since the argument is similar for all relations we give the proof for relation \eqref{relation1} and leave the other proofs for the interested reader.

\begin{figure}[thp]
\begin{minipage}{.33\textwidth}
\begin{tikzpicture}[roundnode/.style={circle, draw=black!80, thick, minimum size=6mm}, font=\small]

	\node[roundnode] at (7,6) (maintopic) {1} ;
 
    \draw[->][black!70] (7.8,0) -- (9.2,0); 
    \draw[->][black!70] (8,-1.1) -- (8,5.2); 

    \draw[brown] (9,4.3) -- (9,-0.9);
    \draw[brown] (9,-0.9) -- (8.5,-0.5);
    \draw[brown] (8.5,-0.5) -- (8.3,-0.1);
    \draw[brown] (8.3,-0.1) -- (8,0.8);
    \draw[brown] (8,0.8) -- (8,4);
    \draw[brown] (8,4) -- (8.5,4.5);
    \draw[brown] (8.8,4.5) -- (8.5,4.5);
    \draw[brown] (8.8,4.5) -- (9,4.3);
    
	\draw[dashed, black!60] (8.5,4.5) -- (8,4.5);
	\draw[dashed, black!60] (9,4.3) -- (8,4.3);
	\draw[dashed, black!60] (8.8,4.5) -- (8.8,0);
	\draw[dashed, black!60] (8.5,4.5) -- (8.5,0);
	\draw[dashed, black!60] (8.5,-0.5) -- (8.5,0);
	\draw[dashed, black!60] (8.5,-0.5) -- (8,-0.5);
	\draw[dashed, black!60] (8.3,-0.1) -- (8.3,0);
	\draw[dashed, black!60] (8.3,-0.1) -- (8,-0.1);

	\node at (7.5,4.6) {$\mu-c_1$};
	\node at (7.1,4.3) {$\mu-c_1-c_2$};
	\node at (7.5,3.9) {$\mu -1 $};
	\node at (7.2,0.8) {$c_{3} + c_{4} $};
	\node at (7.2,-0.1) {$c_{3} - 2 c_{4}$};
	\node at (7.5,-0.5) {$-c_{3}$};
	\node[rotate=45] at (8.8,0.4) {$1-c_{1}$};
	\node[rotate=45] at (8.3,-0.2) {$ c_{4} $};
	\node[rotate=45] at (8.7,-0.2) {$ c_{3} $};
	\node[rotate=45] at (9.1,0.4) {$ 1 - c_{2}$};
    
	\node at (8,-2) {($x_{1}$, $y_{1}$)};

\end{tikzpicture}
\end{minipage}%
\begin{minipage}{.33\textwidth}
\begin{tikzpicture}[roundnode/.style={circle, draw=black!80, thick, minimum size=6mm}, font=\small]

	\node[roundnode] at (7,6) (maintopic) {2} ;
 
    \draw[->][black!70] (7.8,0) -- (9.2,0); 
    \draw[->][black!70] (8,-1.1) -- (8,5.2); 

    \draw[brown] (9,4.3) -- (9,-0.9);
    \draw[brown] (9,-0.9) -- (8.5,-0.5);
    \draw[brown] (8.5,-0.5) -- (8,0.5);
    \draw[brown] (8,0.5) -- (8,3.8);
    \draw[brown] (8.5,4.5) -- (8.1,4.1);
    \draw[brown] (8,3.8) -- (8.1,4.1);
    \draw[brown] (8.8,4.5) -- (8.5,4.5);
    \draw[brown] (8.8,4.5) -- (9,4.3);
    
	\draw[dashed, black!60] (8.5,4.5) -- (8,4.5);
	\draw[dashed, black!60] (9,4.3) -- (8,4.3);
	\draw[dashed, black!60] (8.8,4.5) -- (8.8,0);
	\draw[dashed, black!60] (8.5,4.5) -- (8.5,0);
	\draw[dashed, black!60] (8.5,-0.5) -- (8.5,0);
	\draw[dashed, black!60] (8.1,4.1) -- (8,4.1);
	\draw[dashed, black!60] (8.1,4.1) -- (8.1,0);
	\draw[dashed, black!60] (8.5,-0.5) -- (8,-0.5);

	\node at (7.5,4.6) {$\mu-c_1$};
	\node at (7.1,4.35) {$\mu-c_1-c_2$};
	\node at (7.2,3.8) {$\mu -1 -c_4$};
	\node at (7.8,0.5) {$c_{3}  $};
        \node at (7.6,-0.5) {$-c_{3}$};
	\node[rotate=45] at (8.1,-0.2) {$c_{4}$};
	\node[rotate=45] at (8.8,0.4) {$1-c_{1}$};
	\node at (7.2,4.1) { $\mu -1 +c_4 $};
	\node[rotate=45] at (8.7,-0.2) {$ c_{3} $};
	\node[rotate=45] at (9.1,0.4) {$ 1 - c_{2}$};
    
	\node at (8,-2) {($x_{2}$, $y_{2}$)};

\end{tikzpicture}
\end{minipage}%
\begin{minipage}{.33\textwidth}
\begin{tikzpicture}[roundnode/.style={circle, draw=black!80, thick, minimum size=6mm}, font=\small]

	\node[roundnode] at (7,6) (maintopic) {3} ;
 
    \draw[->][black!70] (7.8,0) -- (9.2,0); 
    \draw[->][black!70] (8,-1.1) -- (8,5.2); 

    \draw[brown] (9,4.3) -- (9,-0.9);
       \draw[brown] (9,-0.9) -- (8.1,-0.2);
    \draw[brown] (8.1,-0.2) -- (8,0.2);
    \draw[brown] (8,0.5) -- (8,3.7);
    \draw[brown] (8.5,4.5) -- (8.2,4.3);
    \draw[brown] (8,3.7) -- (8.2,4.3);
    \draw[brown] (8.8,4.5) -- (8.5,4.5);
    \draw[brown] (8.8,4.5) -- (9,4.3);
    
	\draw[dashed, black!60] (8.5,4.5) -- (8,4.5);
	\draw[dashed, black!60] (9,4.3) -- (8,4.3);
	\draw[dashed, black!60] (8.8,4.5) -- (8.8,0);
	\draw[dashed, black!60] (8.5,4.5) -- (8.5,0);
	\draw[dashed, black!60] (8.2,4.3) -- (8.2,0);
	\draw[dashed, black!60] (8.1,-0.2) -- (8.1,0);
	\draw[dashed, black!60] (8.1,-0.2) -- (8,-0.2);

	\node at (7.5,4.6) {$\mu-c_1$};
	\node at (7.1,4.35) {$\mu-c_1-c_2$};
	\node at (7.2,3.7) {$\mu -1 -c_3$};
	\node at (7.8,0.2) {$c_{4}  $};
        \node at (7.6,-0.2) {$-c_{4}$};
	\node[rotate=45] at (8.25,0.2) {$c_{3}$};
	\node[rotate=45] at (8.8,0.4) {$1-c_{1}$};
	\node at (7.2,4.1) { $\mu -1 +c_3 $};
	\node[rotate=45] at (9.1,0.4) {$ 1 - c_{2}$};
    
	\node at (8,-2) {($x_{3}$, $y_{3}$)};

\end{tikzpicture}
\end{minipage}%

\bigskip 
\begin{minipage}{.33\textwidth}
\begin{tikzpicture}[roundnode/.style={circle, draw=black!80, thick, minimum size=6mm}, font=\small]

	\node[roundnode] at (7,6) (maintopic) {4} ;
 
    \draw[->][black!70] (7.8,0) -- (9.2,0); 
    \draw[->][black!70] (8,-1.1) -- (8,5.2); 

    \draw[brown] (9,4.3) -- (9,-0.9);
    \draw[brown] (9,-0.9) -- (8.9,-0.9);
    \draw[brown] (8.9,-0.9) -- (8,0);
    \draw[brown] (8,0) -- (8,3.7);
    \draw[brown] (8.5,4.5) -- (8.2,4.3);
    \draw[brown] (8,3.7) -- (8.2,4.3);
    \draw[brown] (8.8,4.5) -- (8.5,4.5);
    \draw[brown] (8.8,4.5) -- (9,4.3);
    
	\draw[dashed, black!60] (8.5,4.5) -- (8,4.5);
	\draw[dashed, black!60] (9,4.3) -- (8,4.3);
	\draw[dashed, black!60] (8.8,4.5) -- (8.8,0);
	\draw[dashed, black!60] (8.5,4.5) -- (8.5,0);
	\draw[dashed, black!60] (8.2,4.3) -- (8.2,0);
	\draw[dashed, black!60] (8.9,-0.9) -- (8.9,0);
	\draw[dashed, black!60] (8.9,-0.9) -- (8,-0.9);

	\node at (7.5,4.6) {$\mu-c_1$};
	\node at (7.1,4.35) {$\mu-c_1-c_2$};
	\node at (7.2,3.7) {$\mu -1 -c_3$};
	\node[rotate=-45] at (9.2,-0.4) {$1-c_{4}  $};
        \node at (7.4,-0.9) {$-1+c_{4}$};
	\node[rotate=45] at (8.25,0.2) {$c_{3}$};
	\node[rotate=45] at (8.8,0.4) {$1-c_{1}$};
	\node at (7.2,4.1) { $\mu -1 +c_3 $};
	\node[rotate=45] at (9.1,0.4) {$ 1 - c_{2}$};

	\node[rotate=45] at (9.1,0.4) {$ 1 - c_{2}$};
    
	\node at (8,-2) {($x_{4}$, $y_{4}$)};

\end{tikzpicture}
\end{minipage}
\begin{minipage}{.33\textwidth}
\begin{tikzpicture}[roundnode/.style={circle, draw=black!80, thick, minimum size=6mm}, font=\small]

	\node[roundnode] at (7,6) (maintopic) {5} ;
 
    \draw[->][black!70] (7.8,0) -- (9.2,0); 
    \draw[->][black!70] (8,-1.1) -- (8,5.2);

    \draw[brown] (8,4) -- (8.5,4.5);
    \draw[brown] (8.8,4.5) -- (8.5,4.5);
    \draw[brown] (8.8,4.5) -- (9,4.3);
    \draw[brown] (9,4.3) -- (9,-0.9);
    \draw[brown] (9,-0.9) -- (8.9,-0.9);
    \draw[brown] (8.9,-0.9) -- (8.5,-0.5);
    \draw[brown] (8.5,-0.5) -- (8,0.5);
    \draw[brown] (8,0.5) -- (8,3.8);
    \draw[brown] (8.5,4.5) -- (8,4);
    \draw[brown] (8.8,4.5) -- (8.5,4.5);
    \draw[brown] (8.8,4.5) -- (9,4.3);
    
	\draw[dashed, black!60] (8.5,4.5) -- (8,4.5);
	\draw[dashed, black!60] (9,4.3) -- (8,4.3);
	\draw[dashed, black!60] (8.8,4.5) -- (8.8,0);
	\draw[dashed, black!60] (8.5,4.5) -- (8.5,0);
	\draw[dashed, black!60] (8.5,-0.5) -- (8.5,0);
	\draw[dashed, black!60] (8.5,-0.5) -- (8,-0.5);
	\draw[dashed, black!60] (8.9,-0.9) -- (8.9,0);
	\draw[dashed, black!60] (8.9,-0.9) -- (8,-0.9);

	\node at (7.5,4.6) {$\mu-c_1$};
	\node at (7.1,4.35) {$\mu-c_1-c_2$};
	\node at (7.6,4) {$\mu -1$};
	\node at (7.8,0.5) {$c_{3}  $};
    \node at (7.6,-0.5) {$-c_{3}$};
	\node[rotate=45] at (8.8,0.4) {$1-c_{1}$};
	\node[rotate=45] at (8.7,-0.2) {$ c_{3} $};
	\node[rotate=45] at (9.1,0.4) {$ 1 - c_{2}$};
	\node[rotate=-45] at (9.2,-0.4) {$1-c_{4}  $};
    \node at (7.4,-0.9) {$-1+c_{4}$};
	\node at (8,-2) {($x_{5}$, $y_{5}$)};

\end{tikzpicture}
\end{minipage}%
\begin{minipage}{.33\textwidth}
\begin{tikzpicture}[roundnode/.style={circle, draw=black!80, thick, minimum size=6mm}, font=\small]

	\node[roundnode] at (7,6) (maintopic) {6} ;
    \draw[->][black!70] (7.8,0) -- (9.2,0); 
    \draw[->][black!70] (8,-1.1) -- (8,5.2); 

    \draw[brown] (9,4.3) -- (9,-0.9);
    \draw[brown] (9,-0.9) -- (8.5,-0.5);
    \draw[brown] (8.5,-0.5) -- (8.3,-0.1);
    \draw[brown] (8.3,-0.1) -- (8,0.8);
    \draw[brown] (8,0.8) -- (8,4);
    \draw[brown] (8,4) -- (8.5,4.5);
    \draw[brown] (8.8,4.5) -- (8.5,4.5);
    \draw[brown] (8.8,4.5) -- (9,4.3);
    
	\draw[dashed, black!60] (8.5,4.5) -- (8,4.5);
	\draw[dashed, black!60] (9,4.3) -- (8,4.3);
	\draw[dashed, black!60] (8.8,4.5) -- (8.8,0);
	\draw[dashed, black!60] (8.5,4.5) -- (8.5,0);
	\draw[dashed, black!60] (8.5,-0.5) -- (8.5,0);
	\draw[dashed, black!60] (8.5,-0.5) -- (8,-0.5);
	\draw[dashed, black!60] (8.3,-0.1) -- (8.3,0);
	\draw[dashed, black!60] (8.3,-0.1) -- (8,-0.1);

	\node at (7.5,4.6) {$\mu-c_1$};
	\node at (7.1,4.3) {$\mu-c_1-c_2$};
	\node at (7.5,3.9) {$\mu -1 $};
	\node at (7.8,0.8) {$c_{3} $};
	\node at (7.2,-0.1) {$-c_{3} + 2 c_{4}$};
	\node at (7.3,-0.5) {$-c_{3}-c_4$};
	\node[rotate=45] at (8.5,0.4) {$ c_3-c_{4} $};
	\node[rotate=45] at (8.9,0.4) {$ c_{3} +c_4$};
	\node at (8,-2) {($x_{6}$, $y_{6}$)};
\end{tikzpicture}
\end{minipage}%
\caption{Auxiliary Delzant polygons}\label{aux1}
\end{figure}
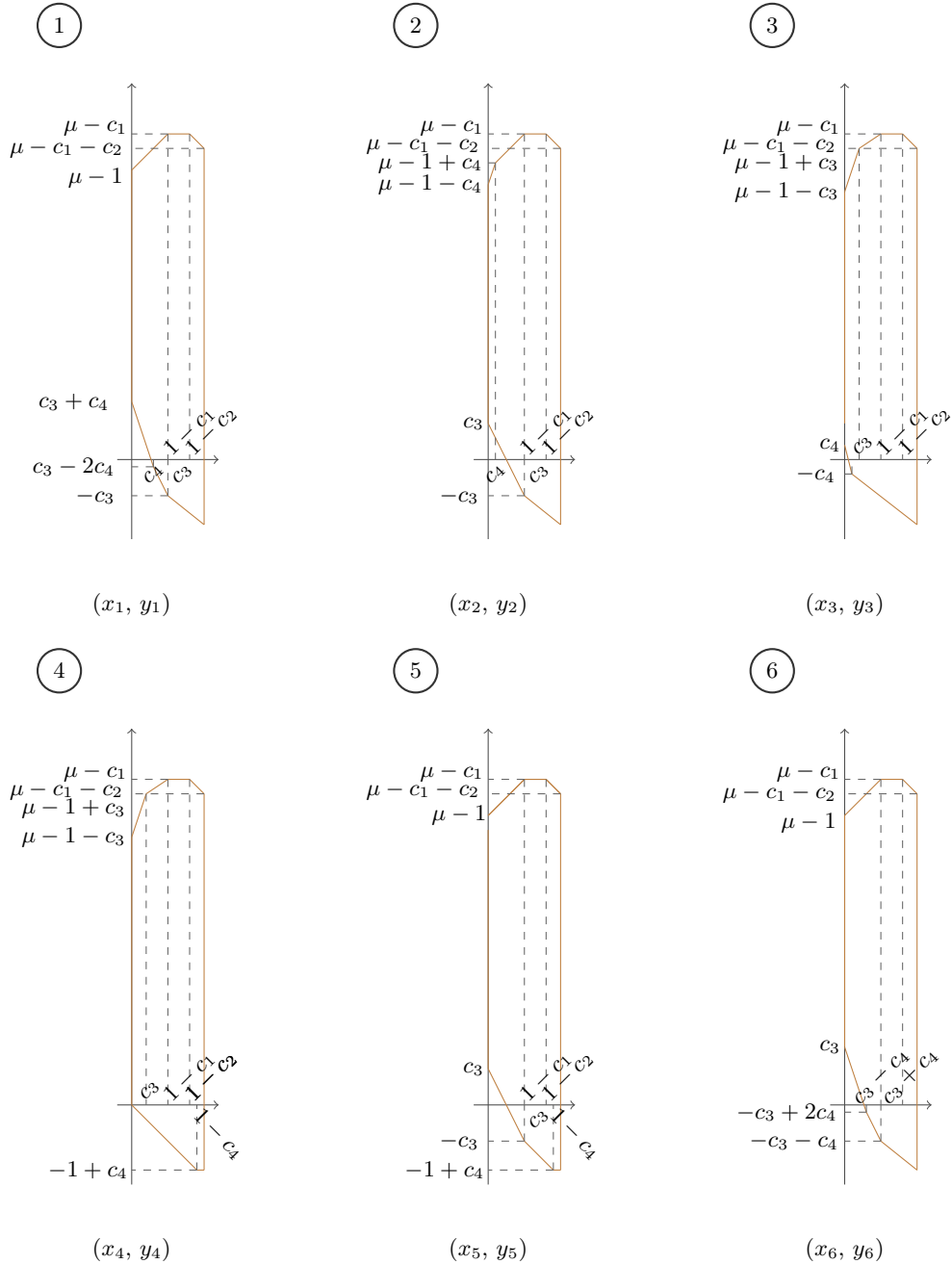

First consider the toric actions on $\Mucccc$ represented in the polygons of Figure \ref{aux1}. Denote the toric action in polygon $n \in \{1,2,3,4,5,6\}$ in Figure \ref{aux1} by $(x_n, y_n)$. Projecting onto the $x$-axis to obtain the graph of $x_n$, using Karshon's classification of Hamiltonian circle actions, it is clear that 
\begin{equation}\label{basicx}
x_1=x_2=x_3 \quad \mbox{and}\quad  x_4=x_5. 
\end{equation}
Moreover, applying the $GL(2, \Z)$ transformation represented by the matrix 
$$ \left (
\begin{array}{cc}
1  & 0 \\
1 & 1 
\end{array} \right)$$ 
to the polygons 3,4,5,6 and then projecting onto the $y$-axis it is easy to check that, as elements of the fundamental group, the following identifications hold 
\begin{align}
x_3+y_3 & =x_4+y_4, \label{relxy1}\\ 
x_5+y_5 & =x_6+y_6.  \label{relxy2}
\end{align}
Furthemore, the $GL(2, \Z)$ transformation represented by the matrix 
$$ \left (
\begin{array}{cc}
1  & 0 \\
2 & 1 
\end{array} \right)$$ 
applied to the polygons 1 and 6 yields, after a projection onto the $y$-axis and a comparision of  the graphs obtained, the following relation
\begin{equation}
2x_1+y_1=2x_6+y_6. \label{relxy3}
\end{equation}

Next consider the toric actions on $\Mucccc$ represented in the polygons of Figure \ref{aux2} and denote the toric action of polygon $n \in \{1,2,3,4,5,6\}$ by $(s_n,t_n)$.
\begin{figure}[thp]
\begin{minipage}{.33\textwidth}
\begin{tikzpicture}[scale=0.7, roundnode/.style={circle, draw=black!80, thick, minimum size=6mm}, font=\footnotesize]
 
 \node[roundnode] at (-4,7) (maintopic) {1} ;
 
    \draw[->][black!70] (-3.2,0) -- (0.6,0);
    \draw[->][black!70] (-3,-0.2) -- (-3,6.2); 
\draw[brown](-3,1) -- (-3,2.5);
   \draw[brown] (-2,4.5) -- (-1.5,5);
     \draw[brown] (-3,2.5) -- (-2,4.5);

    \draw[brown] (-1.5,5) -- (0,5);
  \draw[brown] (0,5) -- (0,0);
    \draw[brown] (-2.3,0) -- (0,0);
   \draw[brown] (-2.3,0) -- (-2.7,0.4);
    \draw[brown] (-3,1) -- (-2.7,0.4);
   
	\draw[dashed, black!60] (-1.5,5) --(-1.5,0);
	\draw[dashed, black!60] (-2,4.5) -- (-2,0);
	\draw[dashed, black!60] (-3,5) -- (-1.5,5);
	\draw[dashed, black!60] (-3,4.5) -- (-2,4.5);
	\draw[dashed, black!60] (-2.9,0.4) -- (-2.7,0.4);
	\draw[dashed, black!60] (-2.7,0) -- (-2.7,0.4);
    \node at (-3.7,0.4) {$c_3- c_{4}$};
    \node at (-3.7,1) {$c_3+c_{4}$};
	\node at (-3.3,5) {$\mu$};
	\node at (-4.1,4.5) {$\mu-c_{1}+c_2 $};
	\node at (-4.1,2.5) {$\mu-c_1-c_{2} $};
	\node[rotate=-45] at (-1.3,-0.25) {$ c_{1} $};
    \node[rotate=-45] at (-1.9,-0.25) {$  c_{2} $};
    \node[rotate=-45] at (-2.3,-0.25) {$  c_{3} $};
    \node[rotate=-45] at (-2.6,-0.25) {$  c_{4} $};

\node at (-1.5,-1.5) {($s_{1}$, $t_{1}$)};

\end{tikzpicture}
\end{minipage}%
\begin{minipage}{.33\textwidth}
\begin{tikzpicture}[scale=0.7, roundnode/.style={circle, draw=black!80, thick, minimum size=6mm}, font=\footnotesize]
 
  \node[roundnode] at (-4,7) (maintopic) {2} ;
 
    \draw[->][black!70] (-3.2,0) -- (0.6,0); 
    \draw[->][black!70] (-3,-0.2) -- (-3,6.2);

\draw[brown](-3,0.7) -- (-3,2.5);
   \draw[brown] (-2,4.5) -- (-1.5,5);
     \draw[brown] (-3,2.5) -- (-2,4.5);

    \draw[brown] (-1.5,5) -- (-0.3,5);
  \draw[brown] (0,4.7) -- (0,0);
    \draw[brown] (-2.3,0) -- (0,0);
   \draw[brown] (-2.3,0) -- (-3,0.7);
   \draw[brown] (0,4.7) -- (-0.3,5);
   
	\draw[dashed, black!60] (-1.5,5) --(-1.5,0);
	\draw[dashed, black!60] (-2,4.5) -- (-2,0);
	\draw[dashed, black!60] (-3,5) -- (-1.5,5);
	\draw[dashed, black!60] (-3,4.5) -- (-2,4.5);
	\draw[dashed, black!60] (-3,4.7) -- (0,4.7);
	\draw[dashed, black!60] (-0.3,0) -- (-0.3,5);
    \node at (-3.7,4.7) {$\mu- c_{4}$};
    \node at (-3.3,0.7) {$c_3$};
	\node at (-3.3,5) {$\mu$};
	\node at (-4.1,4.4) {$\mu-c_{1}+c_2 $};
	\node at (-4.1,2.5) {$\mu-c_1-c_{2} $};
	\node[rotate=-45] at (-1.3,-0.25) {$ c_{1} $};
    \node[rotate=-45] at (-1.9,-0.25) {$  c_{2} $};
    \node[rotate=-45] at (-2.3,-0.25) {$  c_{3} $};
    \node[rotate=-45] at (-0.1,-0.5) {$ 1- c_{4} $};
\node at (-1.5,-1.5) {($s_{2}$, $t_{2}$)};
\end{tikzpicture}
\end{minipage}%
\begin{minipage}{.33\textwidth}

\begin{tikzpicture}[scale=0.7, roundnode/.style={circle, draw=black!80, thick, minimum size=6mm}, font=\footnotesize]
 
  \node[roundnode] at (-4,7) (maintopic) {3} ;
 
    \draw[->][black!70] (-3.2,0) -- (0.6,0); 
    \draw[->][black!70] (-3,-0.2) -- (-3,6.2);

\draw[brown](-3,0.3) -- (-3,2.5);
   \draw[brown] (-2,4.5) -- (-1.5,5);
     \draw[brown] (-3,2.5) -- (-2,4.5);

    \draw[brown] (-1.5,5) -- (-0.7,5);
  \draw[brown] (0,4.3) -- (0,0);
    \draw[brown] (-2.7,0) -- (0,0);
   \draw[brown] (-2.7,0) -- (-3,0.3);
   \draw[brown] (0,4.3) -- (-0.7,5);
   
	\draw[dashed, black!60] (-1.5,5) --(-1.5,0);
	\draw[dashed, black!60] (-2,4.5) -- (-2,0);
	\draw[dashed, black!60] (-3,5) -- (-1.5,5);
	\draw[dashed, black!60] (-3,4.5) -- (-2,4.5);

	\draw[dashed, black!60] (-3,4.3) -- (0,4.3);
	\draw[dashed, black!60] (-0.7,0) -- (-0.7,5);
    \node at (-3.7,4.3) {$\mu- c_{3}$};
    \node at (-3.3,0.3) {$c_4$};
	\node at (-3.3,5) {$\mu$};
	\node at (-4.1,4.6) {$\mu-c_{1}+c_2 $};
	\node at (-4.1,2.5) {$\mu-c_1-c_{2} $};
	\node[rotate=-45] at (-1.3,-0.25) {$ c_{1} $};
    \node[rotate=-45] at (-1.9,-0.25) {$  c_{2} $};
    \node[rotate=-45] at (-2.6,-0.25) {$  c_{4} $};
    \node[rotate=-45] at (-0.4,-0.5) {$ 1- c_{3} $};
\node at (-1.5,-1.5) {($s_{3}$, $t_{3}$)};

\end{tikzpicture}
\end{minipage}%

\bigskip
\begin{minipage}{.33\textwidth}

\begin{tikzpicture}[scale=0.7, roundnode/.style={circle, draw=black!80, thick, minimum size=6mm}, font=\footnotesize]
 
  \node[roundnode] at (-4,7) (maintopic) {4} ;
 
    \draw[->][black!70] (-3.2,0) -- (0.6,0); 
    \draw[->][black!70] (-3,-0.2) -- (-3,6.2); 

\draw[brown] (-0.3,0) -- (0,0.3);
\draw[brown](-3,0) -- (-3,2.5);
   \draw[brown] (-2,4.5) -- (-1.5,5);
     \draw[brown] (-3,2.5) -- (-2,4.5);

    \draw[brown] (-1.5,5) -- (-0.7,5);
  \draw[brown] (0,4.3) -- (0,0.3);
    \draw[brown] (-3,0) -- (-0.3,0);
  
   \draw[brown] (0,4.3) -- (-0.7,5);
   
	\draw[dashed, black!60] (-1.5,5) --(-1.5,0);
	\draw[dashed, black!60] (-2,4.5) -- (-2,0);
	\draw[dashed, black!60] (-3,5) -- (-1.5,5);
	\draw[dashed, black!60] (-3,4.5) -- (-2,4.5);
	\draw[dashed, black!60] (-3,4.3) -- (0,4.3);
	\draw[dashed, black!60] (-0.7,0) -- (-0.7,5);
    \draw[dashed, black!60] (-3,0.3) -- (0,0.3);
    \node at (-3.7,4.3) {$\mu- c_{3}$};
    \node at (-3.3,0.3) {$c_4$};
	\node at (-3.3,5) {$\mu$};
	\node at (-4.1,4.6) {$\mu-c_{1}+c_2 $};
	\node at (-4.1,2.5) {$\mu-c_1-c_{2} $};
	\node[rotate=-45] at (-1.3,-0.25) {$ c_{1} $};
        \node[rotate=-45] at (-1.9,-0.25) {$  c_{2} $};
         \node[rotate=-45] at (0,-0.5) {$  1-c_{4} $};
          \node[rotate=-45] at (-0.4,-0.5) {$ 1- c_{3} $};

\node at (-1.5,-1.5) {($s_{4}$, $t_{4}$)};

\end{tikzpicture}
\end{minipage}%
\begin{minipage}{.33\textwidth}

\begin{tikzpicture}[scale=0.7, roundnode/.style={circle, draw=black!80, thick, minimum size=6mm}, font=\footnotesize]
 
  \node[roundnode] at (-4,7) (maintopic) {5} ;
 
    \draw[->][black!70] (-3.2,0) -- (0.6,0); 
    \draw[->][black!70] (-3,-0.2) -- (-3,6.2); 

\draw[brown] (-0.3,0) -- (0,0.3);
\draw[brown](-3,0.7) -- (-3,2.5);
   \draw[brown] (-2,4.5) -- (-1.5,5);
     \draw[brown] (-3,2.5) -- (-2,4.5);

    \draw[brown] (-1.5,5) -- (0,5);
  \draw[brown] (0,5) -- (0,0.3);
    \draw[brown] (-2.3,0) -- (-0.3,0);
 
   \draw[brown] (-3,0.7) -- (-2.3,0);
   
	\draw[dashed, black!60] (-1.5,5) --(-1.5,0);
	\draw[dashed, black!60] (-2,4.5) -- (-2,0);
	\draw[dashed, black!60] (-3,5) -- (-1.5,5);
	\draw[dashed, black!60] (-3,4.5) -- (-2,4.5);
    \draw[dashed, black!60] (-3,0.3) -- (0,0.3);
    \node at (-3.3,0.7) {$ c_{3}$};
    \node at (-3.3,0.3) {$c_4$};
	\node at (-3.3,5) {$\mu$};
	\node at (-4.1,4.6) {$\mu-c_{1}+c_2 $};
	\node at (-4.1,2.5) {$\mu-c_1-c_{2} $};
	\node[rotate=-45] at (-1.3,-0.25) {$ c_{1} $};
    \node[rotate=-45] at (-1.8,-0.25) {$  c_{2} $};
    \node[rotate=-45] at (0,-0.5) {$  1-c_{4} $};
    \node[rotate=-45] at (-2.1,-0.25) {$c_{3} $};

\node at (-1.5,-1.5) {($s_{5}$, $t_{5}$)};

\end{tikzpicture}
\end{minipage}%
\begin{minipage}{.33\textwidth}

\begin{tikzpicture}[scale=0.7, roundnode/.style={circle, draw=black!80, thick, minimum size=6mm}, font=\footnotesize]
 \node[roundnode] at (-4,7) (maintopic) {6} ;
 
    \draw[->][black!70] (-3.2,0) -- (0.6,0); 
    \draw[->][black!70] (-3,-0.2) -- (-3,6.2);

   \draw[brown](-3,1) -- (-3,2.5);
   \draw[brown] (-2,4.5) -- (-1.5,5);
    \draw[brown] (-3,2.5) -- (-2,4.5);

    \draw[brown] (-1.5,5) -- (0,5);
  \draw[brown] (0,5) -- (0,0);
    \draw[brown] (-1.7,0) -- (0,0);
   \draw[brown] (-1.7,0) -- (-2.3,0.3);
    \draw[brown] (-3,1) -- (-2.3,0.3);
   
	\draw[dashed, black!60] (-1.5,5) --(-1.5,0);
	\draw[dashed, black!60] (-2,4.5) -- (-2,0);
	\draw[dashed, black!60] (-3,5) -- (-1.5,5);
	\draw[dashed, black!60] (-3,4.5) -- (-2,4.5);
	\draw[dashed, black!60] (-3,0.3) -- (-2.3,0.3);
	\draw[dashed, black!60] (-2.3,0) -- (-2.3,0.3);
    \node at (-3.3,0.3) {$c_{4}$};
    \node at (-3.3,1) {$c_3$};
	\node at (-3.3,5) {$\mu$};
	\node at (-4.1,4.5) {$\mu-c_{1}+c_2 $};
	\node at (-4.1,2.5) {$\mu-c_1-c_{2} $};
	\node[rotate=-45] at (-1.2,-0.25) {$ c_{1} $};
    \node[rotate=-45] at (-1.95,-0.25) {$  c_{2} $};
    \node[rotate=-45] at (-2,-0.5) {$  c_{3} -c_4 $};
    \node[rotate=-45] at (-1.3,-0.5) {$  c_3+c_{4} $};
\node at (-1.5,-1.5) {($s_{6}$, $t_{6}$)};
\end{tikzpicture}
\end{minipage}%
\caption{Auxiliary Delzant polygons}
 \label{aux2}
 \end{figure}
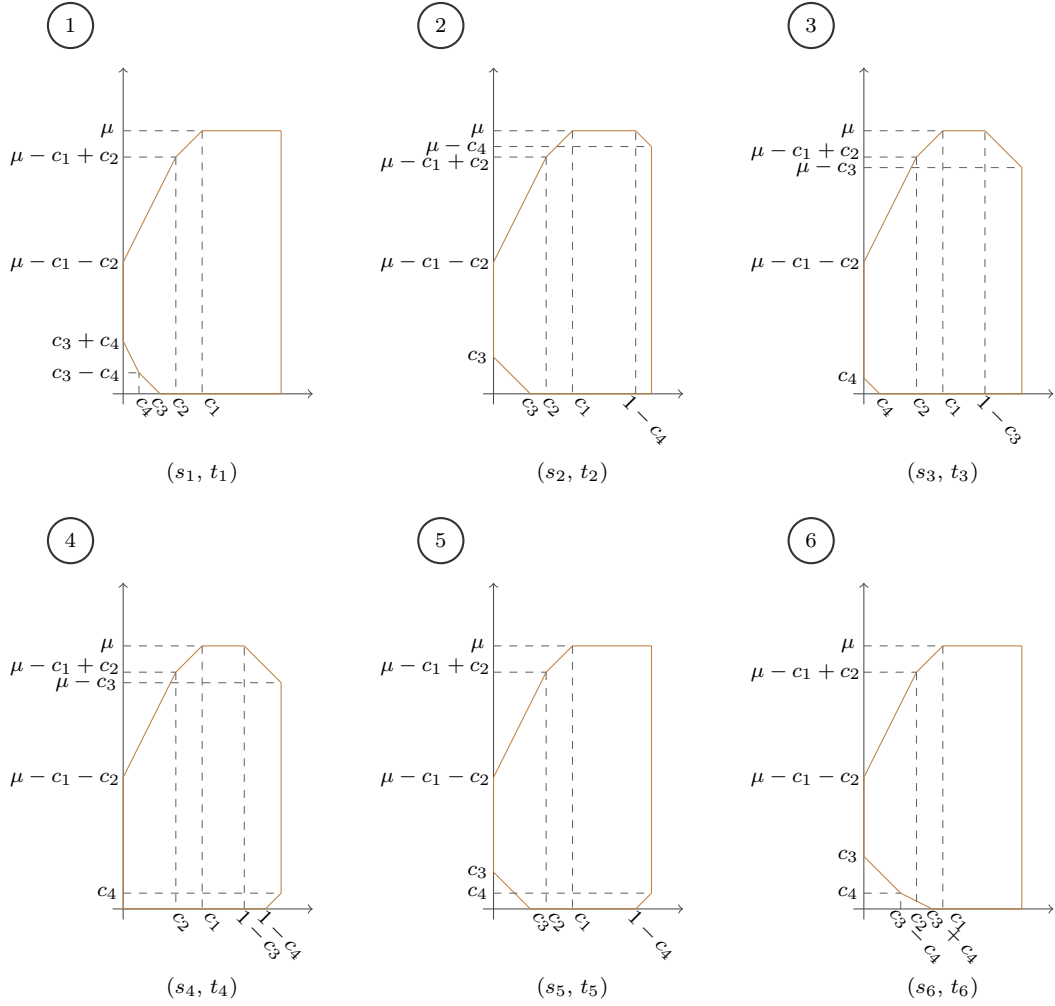
Applying to all these polygons the $GL(2, \Z)$ transformation represented by the matrix 
$$ \left (
\begin{array}{cc}
1  & 0 \\
-1 & 1 
\end{array} \right)$$ 
and then projecting onto the $y$-axis it is easy to see that we obtain, as elements of the fundamental group of $\Symp (\Mucccc)$,  the following identification 
\begin{equation}\label{rely}
y_i = t_i -s_i, \quad i \in \{1,2,3,4,5,6\}
\end{equation}
because the graphs obtained after the projection clearly coincide with the graphs of the actions $y_i$. It is also clear that 
\begin{equation}\label{relt}
t_3=t_4 \quad \mbox{and} \quad t_5 =t_6 
\end{equation}
since the graphs of these actions coincide in pairs. 
Finally, using the $GL(2, \Z)$ transformation represented by the matrix 
$$ \left (
\begin{array}{cc}
1  & 0 \\
1 & 1 
\end{array} \right)$$ applied to polygons 1 and 6 in Figure \ref{aux2} we obtain the following identification
\begin{equation}\label{relst}
s_1+t_1=s_6+t_6. 
\end{equation}

In order to finish the proof of relation \eqref{relation1} we just need to combine all the relations obtained above. More precisely, 
consider relation \eqref{relxy3} and using  first \eqref{rely} and then \eqref{relst} we get 
$$x_1-s_1= x_6-s_6.$$ 
From \eqref{basicx} and \eqref{relxy2} it follows that 
$x_3-s_1=x_5+y_5-y_6-s_6.$
Then using \eqref{rely}  to substitute $y_5$ and $y_6$ in the previous equation yields 
$x_3-s_1=x_5+t_5-s_5 -t_6.$
Hence, relations \eqref{basicx} and \eqref{relt} imply that 
$x_3-s_1= x_4-s_5.$
Finally, use first relation \eqref{relxy1} to obtain
$y_4-y_3= s_1-s_5$ and then \eqref{rely} one more time to get 
$$s_3-s_4=s_1-s_5.$$
Now notice that, using the notation for the circle actions defined in Figure \ref{graphsgeneric}, we have 
$$s_3= -z_{0,3}, \quad s_4=z_{0,12}, \quad s_1 = -z_0 \quad \mbox{and}  \quad s_5= -z_{0,4}$$
so  we proved the desired relation \eqref{relation1}. This concludes the proof of the lemma. 

\end{proof}

Combining relations \eqref{relation2} and \eqref{relation1} we obtain one of the identifications in Proposition \ref{basicrelations}, namely
\begin{equation} \label{basicz03}
z_{0,3}= z_1- z_{1,4} -z_{0,12}.
\end{equation}
The remaining identifications in Proposition \ref{basicrelations}, follow from the previous relation together with relations  \eqref{basicz01},
  \eqref{basicz02} and \eqref{basicz04}. 
Therefore we obtain
\begin{align*}
z_{0,1}&= z_1 -z_{1,4} + z_{0,14}   \\
z_{0,2}&= z_1 -z_{1,4} - z_{0,13} \\
 z_{0,4}&= z_1 -z_{1,4} - z_{0,12}-z_{0,13}+ z_{0,14}.
\end{align*}
Finally, using the last equation  and relation  \eqref{basicz03}  in relation  \eqref{relation1} it follows that 
$$ z_{0}= 2z_1 -2z_{1,4} - z_{0,12}-z_{0,13}+ z_{0,14}.$$
This concludes the proof of Proposition \ref{basicrelations}.
\end{appendix}

%
%


\begin{thebibliography}{1}

\bibitem{AbGrKi} M. Abreu, G. Granja and N. Kitchloo, \emph{Compatible complex structures on symplectic rational ruled surfaces}, Duke Math. J. {\bf 148} (2009), 539--600.

\bibitem{AbrMcD} M. Abreu and D. McDuff, \emph{Topology of symplectomorphism groups of rational ruled surfaces}, J. Amer. Math. Soc. {\bf 13} (2000), 971--1009. 

\bibitem{AnjEden} S. Anjos and S. Eden, \emph{The homotopy Lie algebra of symplectomorphisms groups of 3-folds blow-ups of 
$(S^{2}\times S^{2},\sigma_{\mathrm{std}}\oplus\sigma_{\mathrm{std}})$}, Michigan Math. J. 68 (2019), no. 1, 71--126. 

\bibitem{AL} S. Anjos and R. Leclercq, \emph{Seidel's morphism of toric 4-manifolds}, J. Symplectic Geom. {\bf 16}, no. 1 (2018), 1--68. 

\bibitem{AnjPin} S. Anjos and M. Pinsonnault, \emph{The homotopy Lie algebra of symplectomorphism groups of 3-fold blow-ups of the projective plane}, Math. Z. {\bf  275} (2013), 245--292. 

\bibitem {Bus} O. Buse, \emph{Negative inflation and stability in symplectomorphism groups of ruled surfaces}, J. Symplectic Geom. {\bf 9}(2) (2011), 147--160. 

\bibitem{CM} B. Crauder and R. Miranda, \emph{Quantum cohomology of rational surfaces}. In: Dijkgraaf R.H., Faber C.F., van der Geer G.B.M. (eds) The Moduli Space of Curves. Progress in Mathematics, vol 129. Birkhäuser Boston (1995).  

\bibitem{De} T. Delzant, \emph{Hamiltoniens périodiques et image convexe de l'application moment}, Bulletin de la Société Mathématique de France {\bf 116} (1988), 315--339. 

\bibitem{Eva} J. D. Evans, \emph{Symplectic mapping class groups of some Stein and rational surfaces}, J. Symplectic Geom. {\bf  9}(1) (2011), 45--82. 

\bibitem{Kar} Y. Karshon, \emph{Periodic Hamiltonian Flows on Four Dimensional Manifolds}, Mem. Amer. Math. Soc. 141, no. 672 (1999). 

\bibitem{KaKe} Y. Karshon and L. Kessler, \emph{Distinguishing symplectic blowups of the complex projective plane},  J. Symplectic Geom. {\bf 15}, no. 4 (2017), 1089--1128. 

\bibitem{KKP} Y. Karshon, L. Kessler and M. Pinsonnault, \emph{Counting toric actions on symplectic four-manifolds}, C. R. Math. Acad. Sci. Soc. R. Can. 37 (2015), no. 1, 33–40. 

\bibitem{Ke} J. K\c{e}dra, \emph{Fundamental group of  $\Symp (M, \omega)$ with no circle action}, Archivum Mathematicum, vol 45 (1), 2009.

\bibitem{LalMcD} F. Lalonde and D. McDuff, \emph{The classification of ruled symplectic 4-manifolds}, Math. Research Letters {\bf 3} (1996), 769--778. 

\bibitem{LalMcD2} F. Lalonde and D. McDuff. \emph{J -curves and the classification of rational and ruled symplectic 4-manifolds.} (English summary) Contact and symplectic geometry (Cambridge, 1994), 3–42, 
Publ. Newton Inst., {\bf 8}, Cambridge Univ. Press, Cambridge, 1996. 

\bibitem{LiLiu} T. J. Li\ and\ A. Liu, Symplectic structure on ruled surfaces and a generalized adjunction formula, Math. Res. Lett. {\bf 2} (1995), no.~4, 453--471. MR1355707

\bibitem{LiLi} J. Li and T. J. Li, \emph{Symplectic-2 spheres and the symplectomorphism group of small rational 4--manifolds}, Pac. J. Math., {\bf 304}(2) (2020) 561--606. 

\bibitem{LiLiWu} J. Li, T. J. Li and W. Wu, \emph{The symplectic mapping class group of $ \mathbb{CP}^{2} \# n \overline{ \mathbb{CP}^{2} } $ with $ n \leq 4 $}, Michigan Math. J. {\bf 64}(2) (2015), 319--333. 

\bibitem{LiLiWu2} J. Li. T. J. Li and W. Wu, \emph{Symplectic-2 spheres and the symplectomorphism group of small rational 4--manifolds II}, Trans. Amer. Math. Soc., {\bf 375} (2022), no. 2,  1357--1410.

\bibitem{LiLiu2} T. J. Li and A. Liu, \emph{Uniqueness of symplectic canonical class, surface cone and symplectic cone of 4--manifolds with $b^{+} = 1$},  J. Differential Geom. 58 (2001), no. 2, 331--370.  

\bibitem{LiLiu3} T. J. Li and A. Liu, \emph{Symplectic structure on ruled surfaces and a generalized adjunction formula}, Math. Res. Lett. {\bf 2} (1995), 453--471. 

\bibitem{LiWu} T. J. Li and W. Wu, \emph{Lagrangian spheres, symplectic surfaces and the symplectic mapping class group}, Geom. Topol. {\bf 16} (2012), no. 2, 1121--1169.

\bibitem{McD} D. McDuff, \emph{From symplectic deformation to isotopy}, in Topics in symplectic 4-manifolds (Irvine, CA, 1996), 85--99, Internat. Press, Cambridge, MA.

\bibitem{McD4} D. McDuff, \emph{The symplectomorphism group of a blow-up}, Geom. Dedicata {\bf 132} (2008), 1--29.

\bibitem{McDSal} D. McDuff and D. A. Salamon, \emph{J-holomorphic curves and quantum cohomology}, Second edition. American Mathematical Society Colloquium Publications, 52. American Mathematical Society, Providence, RI, 2012. xiv+726 pp. 

\bibitem{McDTolm} D. McDuff and S. Tolman, \emph{Topological properties of Hamiltonian circle actions}, Int. Math. Res. Papers (2006), 

\bibitem{Pin} M. Pinsonnault, \emph{Symplectomorphism groups and embeddings of balls into rational ruled surfaces}, Compositio Math. {\bf 144}(3) (2008), 787--810.

\bibitem{Pin2} M. Pinsonnault, \emph{Maximal compact tori in the Hamiltonian group of 4-dimensional symplectic manifolds}, J. Mod. Dyn. 2 (2008), no. 3, 431–455.

\bibitem{Se} P. Seidel, \emph{$\pi_1$ of symplectic automorphism groups and invertibles in quantum cohomology rings}, Geom. and Funct. Anal. {\bf 7} (1997), 237--250. 

\bibitem{Seidel} P. Seidel, \emph{Lectures on four-dimensional Dehn twists}. In \emph{Symplectic 4-manifolds and algebraic surfaces}, 231--267, Lecture Notes in Math., 1938, Springer, Berlin, 2008.  

\bibitem{Stern} S. Sternberg, \emph{Minimal coupling and the symplectic mechanics of a classical particle in the presence of a Yang-Mills field}. Proceedings of the National Academy of Sciences of the USA, {\bf 74}, 5253--54. 
\bibitem{Zh-QuantumRings}J. Zhang, Symplectic structure perturbations and continuity of symplectic invariants, Algebr. Geom. Topol. {\bf 19} (2019), no.~7, 3261--3314. 
\end{thebibliography}
\end{document}